\newtheorem{theorem}{Theorem}
\newtheorem{proposition}[theorem]{Proposition}
\newtheorem{corollary}[theorem]{Corollary}
\newtheorem{lemma}[theorem]{Lemma}
\newtheorem {remark}[theorem]{Remark}
\title[]{On Cyclicity in Discontinuous Piecewise Linear near-Hamiltonian Differential Systems with three Zones having a Saddle in the Central one}
\author[R. Euz\'ebio, M. Gouveia, D. Novaes, C. Pessoa and R. Ribeiro]{}
  \subjclass[2021]{34C07}
   \keywords{Limit Cycles; Piecewise Hamiltonian differential system; Melnikov function; Periodic Annulus}
\begin{document}
 \maketitle

\centerline{Rodrigo Euz\'ebio$^1$, M\'arcio Gouveia$^2$, Douglas Novaes$^3$, Claudio Pessoa$^4$ and Ronisio Ribeiro$^5$}

\medskip

{\footnotesize \centerline{Universidade Federal de Goiâs (UFG),} \centerline{Instituto de Matemática e Estatística,} \centerline{R. Jacarandá, 74.690-900, Goiânia, GO, Brazil }
	\centerline{\email{euzebio@ufg.br$^1$}}}

\bigskip

{\footnotesize \centerline{Universidade Estadual Paulista (UNESP),} \centerline{Instituto de Bioci\^encias Letras e Ci\^encias Exatas,} \centerline{R. Cristov\~ao Colombo, 2265, 15.054-000, S. J. Rio Preto, SP, Brazil }
\centerline{\email{mra.gouveia@unesp.br$^2$}, \email{c.pessoa@unesp.br$^4$} and \email{ronisio.ribeiro@unesp.br$^5$}}}

\bigskip

{\footnotesize \centerline{Universidade Estadual de Campinas (UNICAMP),} \centerline{Departamento de Matemática,} \centerline{R. Sérgio Buarque de Holanda, 651, 13.083-859, Campinas, SP, Brazil }
\centerline{\email{ddnovaes@ime.unicamp.br$^3$}}}

\medskip

\bigskip

\begin{quote}{\normalfont\fontsize{8}{10}\selectfont
{\bfseries Abstract.} In this paper, we study the number of limit cycles that can bifurcate from a periodic annulus of discontinuous planar piecewise linear Hamiltonian differential system with three zones separated by two parallel straight lines, such that the linear differential system, given by the piecewise one, in the region between the two straight lines (called of central subsystem) has a saddle at a point equidistant from these lines (obviously, the others subsystems have saddles and centers). We prove that the maximum number of limit cycles that bifurcate from the periodic annulus of this kind of piecewise Hamiltonian differential systems, by linear perturbations, is at least six. For this, we obtain normal forms for the systems and study the number of zeros of its Melnikov functions defined in two and three zones.


\par}
\end{quote}

\section{Introduction and Main Result}
A lot of theory has been developed to investigate the number and position of limit cycles from polynomial differential systems. Having as main motivation the search for the solutions of the famous Hilbert’s 16th problem, which was propose in 1900, see \cite{Hil02}. Currently the study of the limit cycles has been considered for piecewise differential systems in regions separated by a straight line. These systems appear in a natural way in mechanics, electrical circuits, control theory, neurobiology, etc (see the books \cite{diB08, Hen97, Nar14} and the papers \cite{Chu90, Fit61, McK70, Nag62}).
Assuming that the subsystems that define the piecewise one are linear, in the continuous case, was proved in \cite{Fre98} that such systems have at most one limit cycle. On the other hand, when the piecewise linear differential system is discontinuous, i.e. the subsystems do not coincide on the separation straight line, is known that the maximum number of limit cycles is at least three. Apparently, for this case, determining the exact number of limit cycles is a hard task, but important partial results have been obtained, see for instance \cite{Bra13, Buz13, Fre12, Fre14b, Li14, Lli12, LNT, Wan19}. 

Recently, piecewise differential system defined in regions with more than two zones have attracted the attention of researchers, see \cite{Don17, Fon20, Hu13, Li21, Lli15b, Lli18a, Wan16, Yan20}. Results imposing restrictive hypotheses on the systems, such as symmetry and linearity, have been obtained. For instance, conditions for nonexistence and existence of one, two or three limit cycles for symmetric continuous piecewise linear differential systems with three zones can be found in \cite{Lli14}. Now, for the nonsymmetric case, examples with two limit cycles surrounding a unique singular point at the origin was found in \cite{Lli15, Lim17}. Removing these restrictions, we have recent papers estimating the lower bounds for the number of limit cycles bifurcating, by linear perturbations, from periodic annulus of discontinuous piecewise linear differential system with three zones separated by two parallel straight lines. For instance, in \cite{Xio21} the authors showed that at least seven limit cycles can bifurcate from a periodic annulus when the subsystems that define the piecewise one have a boundary global center. On the other hand, when the central subsystem, i.e. the system defined between the two parallel lines, has a center at the origin and the others subsystems have centers or saddles then the maximum number of limit cycles is at least three, see \cite{Pes22a}. Now, when the central subsystem has a saddle at the origin and the others subsystems have virtual centers, there are works with five limit cycles, see \cite{Zan22}. 

 In this paper, we contribute along these lines. Our goal is estimated the lower bounds for the number of crossing limit cycles of a discontinuous piecewise linear near-Hamiltonian differential systems with three zones, given by

\begin{equation}\label{eq:01}
	\left\{\begin{array}{ll}
		\dot{x}= H_y(x,y)+\epsilon f(x,y), \\
		\dot{y}= -H_x(x,y)+\epsilon g(x,y),
	\end{array}
	\right.
\end{equation}
with
\begin{equation*}
	H(x,y)=\left\{\begin{array}{ll}\vspace{0.2cm}
		H^{\scriptscriptstyle L}(x,y)=\dfrac{b_{\scriptscriptstyle L}}{2}y^2-\dfrac{c_{\scriptscriptstyle L}}{2}x^2+a_{\scriptscriptstyle L}xy+\alpha_{\scriptscriptstyle L}y-\beta_{\scriptscriptstyle L}x, \quad x\leq -1, \\ \vspace{0.2cm}
		H^{\scriptscriptstyle C}(x,y)=\dfrac{b_{\scriptscriptstyle C}}{2}y^2-\dfrac{c_{\scriptscriptstyle C}}{2}x^2+a_{\scriptscriptstyle C}xy+\alpha_{\scriptscriptstyle C}y-\beta_{\scriptscriptstyle C}x, \quad -1\leq x\leq 1, \\
		H^{\scriptscriptstyle R}(x,y)=\dfrac{b_{\scriptscriptstyle R}}{2}y^2-\dfrac{c_{\scriptscriptstyle R}}{2}x^2+a_{\scriptscriptstyle R}xy+\alpha_{\scriptscriptstyle R}y-\beta_{\scriptscriptstyle R}x, \quad x \geq 1, \\
	\end{array}
	\right.
\end{equation*}
\begin{equation}\label{eq:02}
	f(x,y)=\left\{\begin{array}{ll}
		f_{\scriptscriptstyle L}(x,y)=r_{10}x+r_{01}y+r_{00}, \quad x\leq -1, \\
		f_{\scriptscriptstyle C}(x,y)=u_{10}x+u_{01}y+u_{00}, \quad -1\leq x\leq 1, \\
		f_{\scriptscriptstyle R}(x,y)=p_{10}x+p_{01}y+p_{00}, \quad x \geq 1, \\
	\end{array}
	\right.
\end{equation}
\begin{equation}\label{eq:03}
	g(x,y)=\left\{\begin{array}{ll}
		g_{\scriptscriptstyle L}(x,y)=s_{10}x+s_{01}y+s_{00}, \quad x\leq -1, \\
		g_{\scriptscriptstyle C}(x,y)=v_{10}x+v_{01}y+v_{00}, \quad -1\leq x\leq 1, \\
		g_{\scriptscriptstyle R}(x,y)=q_{10}x+q_{01}y+q_{00}, \quad x \geq 1, \\
	\end{array}
	\right.
\end{equation}
where the dot denotes the derivative with respect to the independent variable $t$, here called the time, and $0\leq\epsilon<<1$.  When $\epsilon=0$ we say that system \eqref{eq:01} is a piecewise Hamiltonian differential system. We call system \eqref{eq:01} of {\it left subsystem} when $x\leq -1$, {\it right subsystem} when $x\geq 1$ and {\it central subsystem} when $-1\leq x\leq 1$. Moreover, we can classify a singular point $p$ of a subsystem from \eqref{eq:01} according to its position in the regions determinate by the parallel straight lines. More precisely, the central subsystem from \eqref{eq:01} has a {\it real singular point} $p=(p_x,p_y)$ when $-1<p_x<1$, a {\it virtual singular point} when $p_x>1$ or $p_x<-1$ and a {\it boundary singular point} when $p_x=\pm1$. Accordingly, the left (resp. right) subsystem from \eqref{eq:01} has a real singular point $p=(p_x,p_y)$ when $p_x<-1$ (resp. when $p_x>1$ ), a virtual singular point when $p_x>-1$ (resp. when $p_x<1$) and a boundary singular point when $p_x=-1$ (resp. when $p_x=1$). The two parallel straight lines that separate the plane into three zones are called {\it switching lines}. Note that to assume these switching lines parallels to the $y$-axis and passing through the points of abscissa  $x=\pm 1$ is not a constraint. 
 
\medskip

In addition, let us assume that system $\eqref{eq:01}|_{\epsilon=0}$ satisfies the following hypotheses:
\begin{itemize}
	\item[{\rm (H1)}] The unperturbed central subsystem from $\eqref{eq:01}|_{\epsilon=0}$ has a real saddle and the others unperturbed subsystems from $\eqref{eq:01}|_{\epsilon=0}$ have centers or saddles.
	\item[{\rm (H2)}] The unperturbed system  $\eqref{eq:01}|_{\epsilon=0}$ has only crossing points on the straights lines $x=\pm 1$, except by some tangent points.
	\item[{\rm (H3)}] The unperturbed system $\eqref{eq:01}|_{\epsilon=0}$ has three  periodic annulus, one consisting of a family of crossing periodic orbits passing through three zones and the others two consisting of families of crossing periodic orbits passing through two zones, such that each orbit of these families has clockwise orientation.  	
\end{itemize}

By hypothesis (H1), we can classify the system $\eqref{eq:01}|_{\epsilon=0}$ according to the configuration of their singular points (without taking into account if the singular points are real or virtual). More precisely, denoting the centers by the capital letter C and by S the saddles, in the case of three zones, we have the following three class of piecewise linear Hamiltonian systems: SSS, CSS and CSC. That is, SSS indicates that the singular points of the linear systems that define the piecewise differential system are saddles and so on.


\medskip  

We will use the vector $(l, k, m)\in\mathbb{Z}^3$ to denote the limit cycle configurations of system $\eqref{eq:01}$, where its components $k$, $l$ and $m$ indicate the number of limit cycles passing through three zones, the number of limit cycles passing through two zones and surrounding the tangent point of the central subsystem with the switching line $x=1$ and the number of limit cycles surrounding the tangent point of the central subsystem with the switching line $x=-1$, respectively.

\medskip

Thus, the main result of this paper is the follow.

\begin{theorem}\label{the:01}
	The maximum number of limit cycles of system \eqref{eq:01}, satisfying hypotheses {\rm (Hi)}, for $i=1,2,3$, which can bifurcate simultaneous from the periodic annulus of the unperturbed system $\eqref{eq:01}|_{\epsilon=0}$ is at least five, in the case SSS, and at least six, in the cases CSS and CSC. Moreover, the following configurations of limit cycles are achievable: $(2, 2, 1)$ or $(2, 1, 2)$ for the case SSS and $(2,2,2)$ for the cases CSS and CSC.
\end{theorem}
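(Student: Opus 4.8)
The plan is to reduce the problem to counting the simple zeros of first-order Melnikov functions associated with each of the three periodic annuli guaranteed by (H3), and then to maximize those counts simultaneously. I would begin by putting the unperturbed system into normal form: using the affine changes of variables that fix the switching lines $x=\pm1$ (maps of the form $(x,y)\mapsto(x,\lambda y+\mu x+\nu)$, together with time rescalings) one can normalize each linear piece, placing the central saddle symmetrically with respect to the $y$-axis---reflecting the hypothesis that it is equidistant from the two lines---and reducing the lateral pieces to the canonical center or saddle forms prescribed by the class SSS, CSS or CSC. This eliminates most of the Hamiltonian coefficients while leaving the full set of perturbation parameters $r_{ij},s_{ij},u_{ij},v_{ij},p_{ij},q_{ij}$ intact, since those ultimately control the zeros.

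Next I would parametrize each annulus by a single variable $h$ (for instance the ordinate of the crossing point on $x=1$, or a local Hamiltonian level) and derive its first-order Melnikov function by integrating the perturbative one-form $g\,dx-f\,dy$ along the arcs of the unperturbed orbit lying in each visited zone, accounting for the discontinuity of $H$ across $x=\pm1$ through the values of the local Hamiltonians at the crossing points. This produces three functions $M_3(h)$, $M_2^{R}(h)$ and $M_2^{L}(h)$ for the three-zone annulus and for the two-zone annuli around the right and left tangency points. By the standard Melnikov theory for piecewise-smooth systems, each simple zero of one of these functions inside the open parameter interval of its annulus yields one bifurcating crossing limit cycle.

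The core of the argument is to identify the exact functional form of each $M$ and count its zeros. Because the central piece is always a saddle, its arcs are parametrized by hyperbolic functions and contribute algebraic-plus-logarithmic terms, whereas a center piece contributes algebraic-plus-rational (arctangent) terms; thus the SSS, CSS and CSC cases produce genuinely different generating systems. I would write each Melnikov function as a linear combination of an explicit set of generating functions whose coefficients are independent affine functions of the perturbation parameters, and then prove that those generators form an extended complete Chebyshev system on the relevant interval (equivalently, that a suitable Wronskian does not vanish). This pins down the sharp number of simple zeros---two for $M_3$ and up to two for each two-zone function---with the totals differing between SSS and CSS/CSC precisely because the mixture of logarithmic versus rational generators changes.

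Finally, to realize the stated configurations I would check that the coefficient maps of $M_3$, $M_2^{R}$ and $M_2^{L}$ can be steered independently by suitable subsets of the perturbation parameters, so that the prescribed numbers of sign changes can be imposed in all three annuli at once; exhibiting explicit parameter values achieving $(2,2,1)$ and $(2,1,2)$ in case SSS and $(2,2,2)$ in cases CSS and CSC then finishes the proof. I expect the main obstacle to be twofold: proving the Chebyshev (non-vanishing Wronskian) property for the transcendental systems coming from the saddle arcs, where the logarithmic terms make the independence delicate, and verifying that the three coefficient maps remain jointly independent so that the zeros can be produced \emph{simultaneously} rather than one annulus at a time.
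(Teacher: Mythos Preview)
Your overall strategy---normal form, explicit Melnikov functions, counting simple zeros---matches the paper's, but there is a genuine gap in the last two steps that would make your argument fail as stated.

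You propose to show that the coefficient maps of $M_0$, $M_1$, $M_2$ ``can be steered independently by suitable subsets of the perturbation parameters.'' This is precisely what does \emph{not} happen. When the Melnikov functions are written as $M_i(h)=\sum_j k_i^j\,\varphi_j(h)$, the coefficients $k_i^j$ share the same building blocks: for example $k_0^1=2b_{\scriptscriptstyle R}\alpha_4$, $k_1^1=b_{\scriptscriptstyle R}\alpha_4$, $k_2^1=\alpha_4$ are all scalar multiples of the single quantity $u_{10}+v_{01}$, and similarly $k_0^2=k_1^2$, $k_0^3=b_{\scriptscriptstyle R}k_2^2$. After expanding each $M_i$ in powers of $(h-1)$ near the double homoclinic loop, the paper shows that for $b_{\scriptscriptstyle R}=1$ one has $C_0^j=C_1^j+C_2^j$ for $j=0,1,2$, and that among the six coefficients $C_1^0,C_1^1,C_1^2,C_2^0,C_2^1,C_2^2$ only five are independent, with $C_2^2$ forced to be an explicit affine combination of the other five. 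The whole difficulty---and the paper's main technical device, Proposition~\ref{the:coef}---is a careful sign-choosing argument that produces the required simultaneous zeros \emph{despite} these constraints.

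This dependency is also what distinguishes SSS from CSS/CSC. It is not that the generating systems have different Chebyshev ranks; rather, the coefficient $\alpha_2^G$ in the relation $C_2^2=\alpha_2^G C_1^2+\cdots$ turns out to be positive in the SSS case but can be made negative (and $\ne -1$) in CSS and CSC. Proposition~\ref{the:coef} shows that when $\alpha_2^G<0$ and $\alpha_2^G\ne-1$ one obtains the configuration $(2,2,2)$, whereas $\alpha_2^G\ge 0$ only yields $(2,2,1)$. So a Chebyshev/Wronskian argument treating the three functions as independently tunable would both overcount (you would predict $(2,2,2)$ for SSS as well) and miss the actual mechanism behind the $5$ versus $6$ distinction.
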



\medskip

The Theorem \ref{the:01} gives us a larger lower bound for the number of limit cycles than the one obtained in the paper \cite{Zan22}. We would like to point out that in \cite{Zan22} the authors perturb a particular system of the type CSC which is symmetrical with respect to the origin. Here we work with perturbation of general families of pieciwise linear systems determined by the hypotheses (Hi), for $i=1,2,3$, without imposing restrictive hypotheses  like symmetries. Furthermore, in \cite{Zan22} the authors only deal in detail the Melnikov function associated with the right subsystem and do not explain clearly how to ensure the simultaneous occurrence of zeros in the three Melnikov functions involved in the problem. It is worth noting that Theorem \ref{the:01} also deals with cases not yet addressed in the literature, i.e. the cases CSS and SSS.

The paper is organized as follows. In section \ref{sec:mel}, for prove the Theorem \ref{the:01}, we will obtain the expressions of the Melnikov functions associated with the system \eqref{eq:01}, whose zeros correspond to the limit cycles that bifurcate from the periodic annulus (see also \cite{ Xio20, Xio21} for more details about the Melnikov function). Our study is concentrated in the neighborhood of the double homoclinic loop which separates the periodic annulus of orbits that pass through three zones from the two periodic annulus of orbits that pass through only two zones. Thus, to estimate the zeros of the Melnikov functions we consider its expansions at the point corresponding to this orbit. What distinguishes our approach apart from others is that we provide, in the  Proposition \ref{the:coef}, a detailed analytical method to study the number of simultaneous zeros from Melnikov functions defined in two and three zones. Moreover, in order to simplifies the compute, we obtain a normal form to system $\eqref{eq:01}|_{\epsilon=0}$ in Section \ref{sec:NF}. Finally, in Section \ref{sec:Teo} we prove Theorem \ref{the:01}.


\section{Melnikov Function}\label{sec:mel}
Initially, we will introduce the Melnikov functions associated to system $\eqref{eq:01}$ which will be needed  to prove the main result of this paper. For this propose, consider the system $\eqref{eq:01}|_{\epsilon=0}$ satisfying hypothesis (H3), i.e. there are a open interval $J_0=(\alpha_0,\beta_0)$ and a periodic annulus consisting of a family of crossing periodic orbits $L^0_h$, with $h\in J_0$, such that each orbit of this family crosses the straight lines $x=\pm 1$ in four points, $A(h)=(1,a(h))$, $A_1(h)=(1,a_1(h))$, $A_2(h)=(-1,a_2(h))$ and $A_3(h)=(-1,a_3(h))$, with $a_1(h)<a(h)$ and $a_2(h)<a_3(h)$, thought the three zones with clockwise orientation, satisfying the following equations
\begin{equation*}\label{eq:050}
	\begin{aligned}
		& H^{\scriptscriptstyle R}(A(h))=H^{\scriptscriptstyle R}(A_1(h)), \\
		& H^{\scriptscriptstyle C}(A_1(h))=H^{\scriptscriptstyle C}(A_2(h)), \\
		& H^{\scriptscriptstyle L}(A_2(h))=H^{\scriptscriptstyle L}(A_3(h)), \\
		& H^{\scriptscriptstyle C}(A_3(h))=H^{\scriptscriptstyle C}(A(h)), 
	\end{aligned}
\end{equation*} 
and, for $h\in J_0$,
$$H^{\scriptscriptstyle R}_y(A(h))\,H^{\scriptscriptstyle R}_y(A_1(h))\,H^{\scriptscriptstyle L}_y(A_2(h))\,H^{\scriptscriptstyle L}_y(A_3(h))\ne 0,$$ 
$$H^{\scriptscriptstyle C}_y(A(h))\,H^{\scriptscriptstyle C}_y(A_1(h))\,H^{\scriptscriptstyle C}_y(A_2(h))\,H^{\scriptscriptstyle C}_y(A_3(h))\ne 0.$$
Therefore, for each $h\in J_0$, the crossing periodic orbit $L^0_h$ from system $\eqref{eq:01}|_{\epsilon=0}$ is given by $L^0_h=\widehat{AA_1}\cup\widehat{A_1A_2}\cup\widehat{A_2A_3}\cup\widehat{A_3A}$ (see Figure \ref{fig:01}).	

We also have, by hypothesis (H3), that there are two open interval $J_i=(\alpha_i,\beta_i)$, $i=1,2$, such that the unperturbed system $\eqref{eq:01}|_{\epsilon=0}$  has two periodic annulus consisting of two families of crossing periodic orbits $L_h^i$,  $h\in J_i$, $i=1,2$, such that for $i=1$, each orbit of this family crosses the straight lines $x=1$ in two points, $B(h)=(1,a(h))$ and $B_1(h)=(1,b_1(h))$, with $b_1(h)<b(h)$, and for $i=2$, each orbit of this family crosses the straight lines $x=-1$ in two points, $C(h)=(-1,c(h))$ and $C_1(h)=(-1,c_1(h))$, with $c_1(h)<c(h)$, where both orbits passing through the two zones with clockwise orientation, satisfying the following equations
\begin{equation*}\label{eq:051}
	\begin{aligned}
		& H^{\scriptscriptstyle R}(B(h))=H^{\scriptscriptstyle R}(B_1(h)), \\
		& H^{\scriptscriptstyle C}(B_1(h))=H^{\scriptscriptstyle C}(B(h)), \\
		& H^{\scriptscriptstyle C}(C(h))=H^{\scriptscriptstyle C}(C_1(h)), \\
		& H^{\scriptscriptstyle L}(C_1(h))=H^{\scriptscriptstyle L}(C(h)), 
	\end{aligned}
\end{equation*}
and, for $h\in J_i$, $i=1,2$,
\begin{equation*}
	\begin{aligned}
		& H^{\scriptscriptstyle R}_y(B(h))\,H^{\scriptscriptstyle R}_y(B_1(h))\,H^{\scriptscriptstyle L}_y(C(h))\,H^{\scriptscriptstyle L}_y(C_1(h))\ne 0,\\
		& H^{\scriptscriptstyle C}_y(B(h))\,H^{\scriptscriptstyle C}_y(B_1(h))\,H^{\scriptscriptstyle C}_y(C(h))\,H^{\scriptscriptstyle C}_y(C_1(h))\ne 0.
	\end{aligned}
\end{equation*}
Therefore, for each $h\in J_1$ (resp. $h\in J_2$), the crossing periodic orbit $L^1_h$ (resp. $L^2_h$) from system $\eqref{eq:01}|_{\epsilon=0}$ is given by $L^1_h=\widehat{BB_1}\cup\widehat{B_1B}$ (resp. $L^2_h=\widehat{CC_1}\cup\widehat{C_1C}$)  (see Figure \ref{fig:01}).

\begin{figure}[h]
	\begin{center}		
		\begin{overpic}[width=4in]{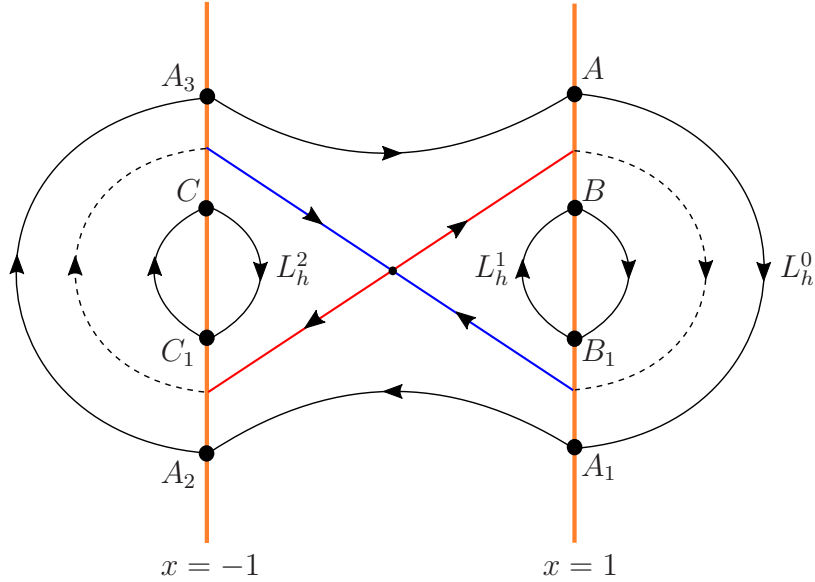}
			\put(70,-4) {$x=1$}
			\put(20,-4) {$x=-1$}
			\put(75,61) {$A$}
			\put(75,9) {$A_1$}
			\put(20,8) {$A_2$}
			\put(20,60) {$A_3$}
			\put(75,45) {$B$}
			\put(75,24) {$B_1$}
			\put(20,24) {$C_1$}
			\put(22,45) {$C$}
			\put(101,35) {$L_h^0$}
			\put(61,35) {$L_h^1$}
			\put(35,35) {$L_h^2$}
		\end{overpic}
	\end{center}
	\vspace{0.7cm}
	\caption{The crossing periodic orbits of system  $\eqref{eq:01}|_{\epsilon=0}$.}\label{fig:01}
\end{figure} 

Consider, for $h\in J_0$, the solution of right subsystem from \eqref{eq:01} starting at point $A(h)$. Let $A_{1\epsilon}(h)=(1,a_{1\epsilon}(h))$ be the first intersection point of this orbit with straight line $x=1$. Denote by $A_{2\epsilon}(h)=(-1,a_{2\epsilon}(h))$ the first intersection point of the orbit of central subsystem from \eqref{eq:01} starting at $A_{1\epsilon}(h)$ with straight line $x=-1$, $A_{3\epsilon}(h)=(-1,a_{3\epsilon}(h))$ the first intersection point of the orbit of left subsystem from \eqref{eq:01} starting at $A_{2\epsilon}(h)$ with straight line $x=-1$ and $A_{\epsilon}(h)=(1,a_{\epsilon}(h))$ the first intersection point of the orbit of central subsystem from \eqref{eq:01} starting at $A_{3\epsilon}(h)$ with straight line $x=1$ (see Figure \ref{fig:02}). Similarity, for $h\in J_1$ (resp. for $h\in J_2$), consider the solution of right (resp. central) subsystem from \eqref{eq:01} starting at point $B(h)$ (resp. at point $C(h)$). Let $B_{1\epsilon}(h)=(1,b_{1\epsilon}(h))$ (resp. $C_{1\epsilon}(h)=(-1,c_{1\epsilon}(h))$) be the first intersection point of this orbit with straight line $x=1$ (resp. $x=-1$) and  $B_{\epsilon}(h)=(1,b_{\epsilon}(h))$ (resp. $C_{\epsilon}(h)=(-1,c_{\epsilon}(h))$) the first intersection point of the orbit of central (resp. left) subsystem from \eqref{eq:01} starting at $B_{1\epsilon}(h)$ (resp. $C_{1\epsilon}(h)$) with straight line $x=1$ (resp. $x=-1$) (see Figure \ref{fig:02}). 
\begin{figure}[h]
	\begin{center}		
		\begin{overpic}[width=4in]{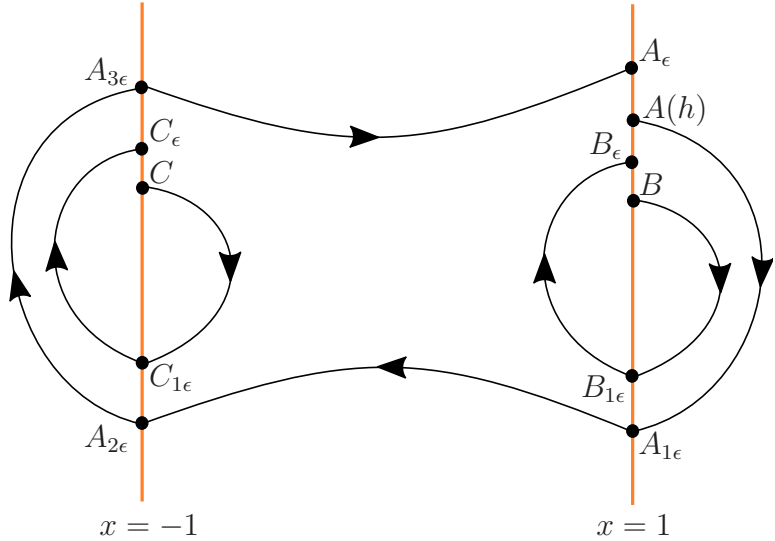}
			\put(77,-4) {$x=1$}
			\put(12,-4) {$x=-1$}
			\put(83,50.5) {$A(h)$}
			\put(82.5,6.5) {$A_{1\epsilon}$}
			\put(10,7.5) {$A_{2\epsilon}$}
			\put(10,55.5) {$A_{3\epsilon}$}
			\put(82.5,58) {$A_{\epsilon}$}
			\put(82.5,40.5) {$B$}
			\put(75,14) {$B_{1\epsilon}$}
			\put(76,45.8) {$B_{\epsilon}$}
			\put(18.5,42) {$C$}
			\put(18.5,15.5) {$C_{1\epsilon}$}
			\put(18.5,47.5) {$C_{\epsilon}$}
		\end{overpic}
	\end{center}
	\vspace{0.7cm}
	\caption{Poincaré map of system  \eqref{eq:01}.}\label{fig:02}
\end{figure} 
We define the Poincaré maps of piecewise system \eqref{eq:01} as follows, 
\begin{equation*}
	\begin{aligned}
		& H^{\scriptscriptstyle R}(A_{\epsilon}(h))-H^{\scriptscriptstyle R}(A(h))=\epsilon M_0(h)+\mathcal{O}(\epsilon^2),\quad \forall h\in J_0,\\
		& H^{\scriptscriptstyle R}(B_{\epsilon}(h))-H^{\scriptscriptstyle R}(B(h))=\epsilon M_1(h)+\mathcal{O}(\epsilon^2),\quad \forall h\in J_1,\\
		& H^{\scriptscriptstyle L}(C_{\epsilon}(h))-H^{\scriptscriptstyle L}(C(h))=\epsilon M_2(h)+\mathcal{O}(\epsilon^2),\quad \forall h\in J_2,
	\end{aligned}
\end{equation*}
where $M_0(h)$ and $M_i(h)$, $i=1,2$, are called the {\it first order Melnikov functions} associated to piecewise system \eqref{eq:01} in three and two zones, respectively. Then, following the steps of the proof of the Theorem 1.1 in \cite{Liu10} for the case in two zones and, doing the obvious adaptations, for the case of three zones, we can prove the following theorem (see also \cite{Xio21}).
\begin{theorem}\label{teo:mel}
	Consider  system \eqref{eq:01} with $0\leq \epsilon <<1$ and suppose that the unperturbed system $\eqref{eq:01}|_{\epsilon=0}$ satisfies the hypotheses $(\text{H}i)$, $i=1,2,3$. Then the first order Melnikov functions associated to system \eqref{eq:01} can be expressed as
	\begin{equation*}\label{eq:mel0}
		\begin{aligned}
			M_0(h) &  = \frac{H_y^{\scriptscriptstyle R}(A)}{H_y^{\scriptscriptstyle C}(A)} I_{\scriptscriptstyle C}^0 + \frac{H_y^{\scriptscriptstyle R}(A)H_y^{\scriptscriptstyle C}(A_3)}{H_y^{\scriptscriptstyle C}(A)H_y^{\scriptscriptstyle L}(A_3)} I_{\scriptscriptstyle L}^0 + \frac{H_y^{\scriptscriptstyle R}(A)H_y^{\scriptscriptstyle C}(A_3)H_y^{\scriptscriptstyle L}(A_2)}{H_y^{\scriptscriptstyle C}(A)H_y^{\scriptscriptstyle L}(A_3)H_y^{\scriptscriptstyle C}(A_2)} \bar{I}_{\scriptscriptstyle C}^0 \\
			&\quad  +  \frac{H_y^{\scriptscriptstyle R}(A)H_y^{\scriptscriptstyle C}(A_3)H_y^{\scriptscriptstyle L}(A_2)H_y^{\scriptscriptstyle C}(A_1)}{H_y^{\scriptscriptstyle C}(A)H_y^{\scriptscriptstyle L}(A_3)H_y^{\scriptscriptstyle C}(A_2)H_y^{\scriptscriptstyle R}(A_1)} I_{\scriptscriptstyle R}^0,\quad h\in J_0,
		\end{aligned}
	\end{equation*} 
	\begin{equation*}\label{eq:mel1}
		\begin{aligned} 
			M_1(h) & = \frac{H_y^{\scriptscriptstyle R}(B)}{H_y^{\scriptscriptstyle C}(B)}I^{\scriptscriptstyle 1}_{\scriptscriptstyle C} + \frac{H_y^{\scriptscriptstyle R}(B)H_y^{\scriptscriptstyle C}(B_1)}{H_y^{\scriptscriptstyle C}(B)H_y^{\scriptscriptstyle R}(B_1)} I^{\scriptscriptstyle 1}_{\scriptscriptstyle R}, \quad h\in J_1, 
		\end{aligned}
	\end{equation*}
and 
  	\begin{equation*}\label{eq:mel2}
  	\begin{aligned} 
  		M_2(h) & = \frac{H_y^{\scriptscriptstyle C}(C)}{H_y^{\scriptscriptstyle L}(C)}I^{\scriptscriptstyle 2}_{\scriptscriptstyle L} + \frac{H_y^{\scriptscriptstyle C}(C)H_y^{\scriptscriptstyle L}(C_1)}{H_y^{\scriptscriptstyle L}(C)H_y^{\scriptscriptstyle C}C_1)} I^{\scriptscriptstyle 2}_{\scriptscriptstyle C}, \quad h\in J_2, 
  	\end{aligned}
  \end{equation*}
	where
	$$
	I^{\scriptscriptstyle 0}_{\scriptscriptstyle C}= \int_{\widehat{A_3A}}g_{\scriptscriptstyle C}dx-f_{\scriptscriptstyle C}dy,\quad I^{\scriptscriptstyle 0}_{\scriptscriptstyle L}= \int_{\widehat{A_2A_3}}g_{\scriptscriptstyle L}dx-f_{\scriptscriptstyle L}dy,\quad \bar{I}^{\scriptscriptstyle 0}_{\scriptscriptstyle C}=\int_{\widehat{A_1A_2}}g_{\scriptscriptstyle C}dx-f_{\scriptscriptstyle C}dy,
	$$
	$$
	I^{\scriptscriptstyle 0}_{\scriptscriptstyle R}=\int_{\widehat{AA_1}}g_{\scriptscriptstyle C}dx-f_{\scriptscriptstyle C}dy,\quad I^{\scriptscriptstyle 1}_{\scriptscriptstyle C}=\int_{\widehat{B_1B}}g_{\scriptscriptstyle C}dx-f_{\scriptscriptstyle C}dy,\quad I^{\scriptscriptstyle 1}_{\scriptscriptstyle R}=\int_{\widehat{BB_1}}g_{\scriptscriptstyle R}dx-f_{\scriptscriptstyle R}dy,
	$$
	$$
	 I^{\scriptscriptstyle 2}_{\scriptscriptstyle L}=\int_{\widehat{C_1C}}g_{\scriptscriptstyle L}dx-f_{\scriptscriptstyle L}dy\quad\text{and}\quad I^{\scriptscriptstyle 2}_{\scriptscriptstyle C}=\int_{\widehat{CC_1}}g_{\scriptscriptstyle C}dx-f_{\scriptscriptstyle C}dy,
	$$
	Furthermore, if $M_i(h)$, for $i=0,1,2$, has a simple zero at $h^{*}$, then for $0< \epsilon <<1$, the system \eqref{eq:01} has a unique limit cycle near $L_{h^{*}}$. 
\end{theorem}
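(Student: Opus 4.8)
The plan is to reproduce, for the three-zone setting, the first-order analysis of the Poincar\'e (return) map used in Theorem~1.1 of \cite{Liu10}, assembling the return map of each periodic orbit from the transition maps across the individual zones. The basic identity is that along any solution of \eqref{eq:01} one has $\frac{d}{dt}H(x(t),y(t))=\epsilon\bigl(H_xf+H_yg\bigr)$; since on the unperturbed orbit $dx=H_y\,dt$ and $dy=-H_x\,dt$, this equals $\epsilon\,(g\,dx-f\,dy)$. Integrating over each arc therefore expresses the increment of the relevant Hamiltonian, to first order in $\epsilon$, as $\epsilon$ times the corresponding line integral; applied to the four arcs of $L_h^0$ this yields exactly $I_R^0,\bar I_C^0,I_L^0,I_C^0$ (and, for the two-zone orbits, the pairs $I_R^1,I_C^1$ and $I_C^2,I_L^2$).

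First I would expand the perturbed crossing ordinates as $a_{j\epsilon}=a_j+\epsilon a_j^{(1)}+\mathcal{O}(\epsilon^2)$ (and likewise $a_\epsilon=a+\epsilon a^{(1)}+\mathcal{O}(\epsilon^2)$). The transversality conditions built into (H3), namely the non-vanishing of the various $H_y$ at the crossing points, guarantee through the implicit function theorem that the perturbed orbit still crosses $x=\pm1$ transversally and that these ordinates are analytic in $\epsilon$. On the right arc, $H^{\scriptscriptstyle R}(A_{1\epsilon})-H^{\scriptscriptstyle R}(A)=\epsilon I_R^0+\mathcal{O}(\epsilon^2)$; expanding the left-hand side and cancelling $H^{\scriptscriptstyle R}(A_1)-H^{\scriptscriptstyle R}(A)=0$ (the unperturbed closing relation) gives $a_1^{(1)}=I_R^0/H_y^{\scriptscriptstyle R}(A_1)$. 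Repeating this across the central arc $\widehat{A_1A_2}$ gives $a_2^{(1)}H_y^{\scriptscriptstyle C}(A_2)-a_1^{(1)}H_y^{\scriptscriptstyle C}(A_1)=\bar I_C^0$, and similarly across the left arc and the final central arc. The mechanism producing the $H_y$-ratios is transparent here: each interior crossing point is shared by the arc arriving at it and the arc departing from it, so the common displacement $a_j^{(1)}$ is fixed (up to the reciprocal of the arriving subsystem's $H_y$) by the former and then enters the equation for the latter multiplied by the departing subsystem's $H_y$, contributing the ratio of these two $H_y$ values at that point.

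I would then telescope the four relations, solving in turn for $a_1^{(1)},a_2^{(1)},a_3^{(1)},a^{(1)}$ and inserting the result into the return displacement $H^{\scriptscriptstyle R}(A_\epsilon)-H^{\scriptscriptstyle R}(A)=\epsilon\,a^{(1)}H_y^{\scriptscriptstyle R}(A)+\mathcal{O}(\epsilon^2)$, whence $M_0(h)=a^{(1)}H_y^{\scriptscriptstyle R}(A)$; the chain of substitutions reproduces precisely the four products of $H_y$-ratios in the stated formula. The formulas for $M_1$ and $M_2$ follow from the identical argument with only two arcs and are literally the two-zone situation of \cite{Liu10}. Finally, the statement on simple zeros is the standard consequence of the implicit function theorem: the displacement equals $\epsilon M_i(h)+\mathcal{O}(\epsilon^2)$, so a simple zero $h^\ast$ of $M_i$ (where $M_i(h^\ast)=0$, $M_i'(h^\ast)\neq0$) persists as an isolated, transversal zero of the full displacement for $0<\epsilon\ll1$, corresponding to a unique crossing limit cycle near $L_{h^\ast}$.

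The main difficulty is not the algebra but the analytic set-up behind it: one must check that the transversality hypotheses keep all four (respectively two) transition maps well defined and jointly analytic in $(h,\epsilon)$ as the orbit is glued together at the switching lines, and that the $\mathcal{O}(\epsilon^2)$ remainders are uniform in $h$ on compact subintervals of $J_0$ (resp.\ $J_1,J_2$) so that the first-order expansion faithfully detects simple zeros. Once this regularity is secured, the bookkeeping of the $H_y$-factors is forced by the telescoping and matches the announced expressions.
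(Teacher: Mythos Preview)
Your proposal is correct and is precisely the ``obvious adaptation'' of Theorem~1.1 in \cite{Liu10} that the paper invokes in lieu of a written proof: the paper does not give any argument beyond that citation, and your telescoping of the first-order increments of $H^{\scriptscriptstyle R},H^{\scriptscriptstyle C},H^{\scriptscriptstyle L}$ across the four arcs, together with the implicit-function step for simple zeros, is exactly what that reference does in two zones and what the adaptation amounts to here.
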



\section{Normal Form}\label{sec:NF}
 Now, we will do a continuous linear change of variables that keeps invariant the straight lines $x=\pm 1$, in order to decrease the number of parameters of system $\eqref{eq:01}|_{\epsilon=0}$. This change of variables is a homeomorphism and will be a topological equivalence between the systems. More precisely, following the steps of the proof of the Proposition 3 from \cite{Pes22b}, and doing the obvious adaptations, we have the follow result.

\begin{proposition}\label{fn:01}
Suppose that the central subsystem from $\eqref{eq:01}|_{\epsilon=0}$ has a saddle and the other two subsystems have saddles or centers. Then, after a linear change of variables and a rescaling of the independent variable, we can written the system $\eqref{eq:01}|_{\epsilon=0}$ with

\begin{equation}\label{nf:h}
	H(x,y)=\left\{\begin{array}{ll}\vspace{0.2cm}
		H^{\scriptscriptstyle L}(x,y)=\dfrac{b_{\scriptscriptstyle L}}{2}y^2-\dfrac{c_{\scriptscriptstyle L}}{2}x^2+a_{\scriptscriptstyle L}xy+a_{\scriptscriptstyle L}y-\beta_{\scriptscriptstyle L}x, \quad\quad x\leq -1, \\ \vspace{0.2cm}
		H^{\scriptscriptstyle C}(x,y)=\dfrac{y^2}{2}-\dfrac{x^2}{2} - \beta_{\scriptscriptstyle C}x, \quad\quad\quad\quad\quad\quad\quad\quad\quad\quad -1\leq x\leq 1, \\
		H^{\scriptscriptstyle R}(x,y)=\dfrac{b_{\scriptscriptstyle R}}{2}y^2-\dfrac{c_{\scriptscriptstyle R}}{2}x^2+a_{\scriptscriptstyle R}xy-a_{\scriptscriptstyle R}y-\beta_{\scriptscriptstyle R}x, \quad\quad x \geq 1. \\
	\end{array}
	\right.
\end{equation} 
\end{proposition}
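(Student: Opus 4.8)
The plan is to follow the adaptation of Proposition~3 of \cite{Pes22b}: produce the normal form by a single continuous change of variables (piecewise affine, but continuous across $x=\pm1$) together with a rescaling of time. First I would observe that any affine map of the plane keeping the two switching lines $\{x=-1\}$ and $\{x=1\}$ invariant must have the form $X=x$, $Y=\gamma x+\delta y+\eta$ with $\delta\ne0$; the alternative $X=-x$ only interchanges the left and right zones and can be discarded after relabeling. Thus the only admissible linear freedom is a $y$-shear $\gamma$, a scaling $\delta$ of $y$, and a vertical translation $\eta$, to which I add a time rescaling $t\mapsto\lambda t$ that multiplies each Hamiltonian piece by the constant $\lambda$. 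Continuity on the lines $x=\pm1$ forces the scaling $\delta$ to be common to the three zones, while the shear and the translation may differ from zone to zone, subject to matching on each line.

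The first main step is to normalize the central saddle. Writing $H^{\scriptscriptstyle C}_y=b_{\scriptscriptstyle C}y+a_{\scriptscriptstyle C}x+\alpha_{\scriptscriptstyle C}$, I would choose $\gamma$ and $\eta$ so that $H^{\scriptscriptstyle C}_y$ becomes proportional to the new variable $Y$ (this removes simultaneously the $xy$-term and the $y$-term of $H^{\scriptscriptstyle C}$), and then fix $\delta$ and $\lambda$ so that the coefficients of $y^2$ and $x^2$ become $+\tfrac12$ and $-\tfrac12$. A direct computation gives $\gamma=a_{\scriptscriptstyle C}\delta/b_{\scriptscriptstyle C}$, $\eta=\alpha_{\scriptscriptstyle C}\delta/b_{\scriptscriptstyle C}$, $\delta^{2}=b_{\scriptscriptstyle C}^{2}/(a_{\scriptscriptstyle C}^{2}+b_{\scriptscriptstyle C}c_{\scriptscriptstyle C})$ and $\lambda=\delta^{2}/b_{\scriptscriptstyle C}$, so the construction is well defined precisely when $a_{\scriptscriptstyle C}^{2}+b_{\scriptscriptstyle C}c_{\scriptscriptstyle C}>0$, which is exactly the requirement that the central linearization have negative determinant, i.e.\ that the central equilibrium be a saddle. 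This produces $H^{\scriptscriptstyle C}(x,y)=\tfrac{y^{2}}{2}-\tfrac{x^{2}}{2}-\beta_{\scriptscriptstyle C}x$. The degenerate subcase $b_{\scriptscriptstyle C}=0$ (where the saddle forces $a_{\scriptscriptstyle C}\ne0$) must be treated apart, since no $y$-shear can create the $y^{2}$-term; I would handle it by a preliminary adjustment or set it aside as non-generic, as in \cite{Pes22b}.

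The second step is to bring the outer pieces to the stated form. Since $X=x$ and $\delta$ is common to the three zones, each of $H^{\scriptscriptstyle L}$ and $H^{\scriptscriptstyle R}$ automatically keeps its general quadratic shape, so it remains only to adjust the coefficient of $y$. Using the shear available in each outer zone---equivalently, fixing the vertical origin on each switching line so that it agrees with the tangency set $\{H^{\scriptscriptstyle C}_y=0\}=\{y=0\}$ of the normalized central field---I would arrange that the coefficient of $y$ in $H^{\scriptscriptstyle L}$ equals the coefficient $a_{\scriptscriptstyle L}$ of $xy$, and that the coefficient of $y$ in $H^{\scriptscriptstyle R}$ equals $-a_{\scriptscriptstyle R}$; the opposite signs are forced by imposing the condition on the boundaries $x=-1$ and $x=+1$, respectively. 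Finally I would check that the resulting map is a homeomorphism (since $\delta\ne0$), that it preserves the three-zone structure and the clockwise orientation (taking $\delta,\lambda>0$ and composing with $y\mapsto-y$ if necessary), and hence that it is a topological equivalence carrying $\eqref{eq:01}|_{\epsilon=0}$ onto the system with Hamiltonian \eqref{nf:h}.

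The step I expect to be most delicate is the outer normalization. The transformation is rigid along the switching lines---its restriction to $\{x=\pm1\}$ is already determined once the central piece has been fixed---so realizing both $\alpha_{\scriptscriptstyle L}=a_{\scriptscriptstyle L}$ and $\alpha_{\scriptscriptstyle R}=-a_{\scriptscriptstyle R}$ hinges on the geometry of how the vertical-tangency sets of the adjacent subsystems meet each line, rather than on a free choice of parameters. Making this compatibility explicit, and verifying it under the standing hypotheses, is the heart of the argument; by contrast the central normalization and the homeomorphism/orientation verifications are routine bookkeeping.
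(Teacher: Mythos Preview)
Your plan is the right one and is exactly what the paper indicates (it gives no self-contained proof, deferring to Proposition~3 of \cite{Pes22b}). The central normalization is routine as you describe.

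The concern you raise about the outer step is well founded, and the piecewise shear does not resolve it. Writing the right-zone map as $Y=\gamma_{\scriptscriptstyle R} x+\delta y+\eta_{\scriptscriptstyle R}$ with the continuity constraint $\gamma_{\scriptscriptstyle R}+\eta_{\scriptscriptstyle R}=\gamma_{\scriptscriptstyle C}+\eta_{\scriptscriptstyle C}$, the requirement that the new $y$-coefficient of $H^{\scriptscriptstyle R}$ equal the negative of its new $xy$-coefficient simplifies, after $\gamma_{\scriptscriptstyle R}$ cancels identically, to
\[
\frac{a_{\scriptscriptstyle R}+\alpha_{\scriptscriptstyle R}}{b_{\scriptscriptstyle R}}\;=\;\frac{a_{\scriptscriptstyle C}+\alpha_{\scriptscriptstyle C}}{b_{\scriptscriptstyle C}},
\]
which says precisely that the tangency points of the central and right fields on $x=1$ coincide in the original coordinates; the analogous identity must hold on $x=-1$. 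This is \emph{not} a consequence of the bare hypotheses stated in the proposition (only saddle/center type). It is, however, an immediate consequence of hypothesis~(H2): on $x=1$ the normal components $H^{\scriptscriptstyle C}_y(1,y)=b_{\scriptscriptstyle C}y+a_{\scriptscriptstyle C}+\alpha_{\scriptscriptstyle C}$ and $H^{\scriptscriptstyle R}_y(1,y)=b_{\scriptscriptstyle R}y+a_{\scriptscriptstyle R}+\alpha_{\scriptscriptstyle R}$ are affine in $y$, and the absence of sliding forces one to be a positive scalar multiple of the other, hence forces their zeros to agree. The same observation disposes of the case $b_{\scriptscriptstyle C}=0$ you set aside (it would force $b_{\scriptscriptstyle R}=b_{\scriptscriptstyle L}=0$, which is incompatible with (H3)). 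So the proposition is slightly under-hypothesized as written; to complete your argument you should invoke (H2) explicitly at the outer step. Since the paper only ever applies the proposition under (H1)--(H3), this does no harm.
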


\medskip

\begin{remark}
	Consider the system $\eqref{eq:01}|_{\epsilon=0}$ in its normal form, i.e. with $H(x,y)$ given by \eqref{nf:h}. In this paper, we restrict the hypothesis  (H1) a little more, more precisely we will assume that $\beta_{\scriptscriptstyle C}=0$, i.e. the singular point of the central subsystem from $\eqref{eq:01}|_{\epsilon=0}$ is at the origin. This extra restriction, apparently, does not change the number of limit cycles. Since, by the hypothesis (H1), we will always have a double homoclinic loop regardless of the position of the real saddle. However, it simplifies the expressions of Melnikov functions, which becomes the compute more easy. Therefore, in what follows, we will consider the system \eqref{eq:01} with $f(x,y)$ and $g(x,y)$ given by \eqref{eq:02} and \eqref{eq:03}, respectively, and $H(x,y)$ given by \eqref{nf:h} with $\beta_{\scriptscriptstyle C}=0$.
\end{remark}

\bigskip


In the follow proposition, we provide the open intervals $J_i$, $i=0,1,2$, where the first order Melnikov functions associated to system \eqref{eq:01} are define.


\begin{proposition}
	Consider the system \eqref{eq:01} assuming the hypotheses (Hi), $i=1,2,3$. 
	\begin{itemize}
		\item[{\rm (a)}] If the system $\eqref{eq:01}|_{\epsilon=0}$ is of type SSS or CSS, then  $J_0=(1,\tau_{\scriptscriptstyle RS})$ and $J_1=J_2=(0,1)$, where $\tau_{\scriptscriptstyle RS}=(a_{\scriptscriptstyle R}^2-b_{\scriptscriptstyle R}\beta_{\scriptscriptstyle R}-\omega_{\scriptscriptstyle RS}^2)/b_{\scriptscriptstyle R}\omega_{\scriptscriptstyle RS}$ with $\omega_{\scriptscriptstyle RS}=\sqrt{a^2_{\scriptscriptstyle R} + b_{\scriptscriptstyle R}c_{\scriptscriptstyle R}}$, and the periodic annulus are equivalents to one of the Figure \ref{fig:03sss2}--\ref{fig:03css2}.  
		\item[{\rm (b)}] If the system $\eqref{eq:01}|_{\epsilon=0}$ is of type CSC, then $J_0=(1,\infty)$ and $J_1=J_2=(0,1)$, with the periodic annulus equivalents to one of the Figure \ref{fig:04}.
	\end{itemize}
\end{proposition}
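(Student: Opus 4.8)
The plan is to determine each interval $J_i$ by tracking the ordinate $h$ of the distinguished upper crossing point ($A(h)$ for the three-zone family, $B(h)$ and $C(h)$ for the two two-zone families, so that $a(h)=b(h)=c(h)=h$) and deciding, for each $h$, whether the three (resp. two) arcs composing $L_h^i$ simultaneously exist and close up into a crossing periodic orbit. Working in the normal form \eqref{nf:h} with $\beta_{\scriptscriptstyle C}=0$, the central Hamiltonian reduces to $H^{\scriptscriptstyle C}(x,y)=(y^2-x^2)/2$, whose saddle sits at the origin with separatrices $y=\pm x$, and on either switching line $H^{\scriptscriptstyle C}(\pm1,y)=(y^2-1)/2$. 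The normal form was moreover arranged so that the terms linear in $y$ cancel on the switching lines, giving $H^{\scriptscriptstyle R}(1,y)=\tfrac{b_{\scriptscriptstyle R}}{2}y^2-\tfrac{c_{\scriptscriptstyle R}}{2}-\beta_{\scriptscriptstyle R}$ and $H^{\scriptscriptstyle L}(-1,y)=\tfrac{b_{\scriptscriptstyle L}}{2}y^2-\tfrac{c_{\scriptscriptstyle L}}{2}+\beta_{\scriptscriptstyle L}$; in particular the pairs $(1,\pm h)$ and $(-1,\pm h)$ always lie on a common level curve of $H^{\scriptscriptstyle R}$, resp. $H^{\scriptscriptstyle L}$, so the lateral arcs automatically join symmetric crossing points and $h$ may legitimately be taken to be the ordinate of the top crossing.

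First I would analyze the central arcs. A level curve of $H^{\scriptscriptstyle C}$ through a crossing point of ordinate $h$ has level $(h^2-1)/2$, hence is the hyperbola $y^2-x^2=h^2-1$. For $h>1$ it opens vertically and its upper (resp. lower) branch joins $(1,h)$ to $(-1,h)$ (resp. $(1,-h)$ to $(-1,-h)$) across the whole strip $-1\le x\le1$: these are exactly the two central arcs of a three-zone orbit, which therefore exist precisely when $h>1$. For $0<h<1$ the hyperbola opens horizontally and its right branch joins $(1,h)$ to $(1,-h)$ while staying in $x\le1$ (turning at $(\sqrt{1-h^2},0)$), which is the central arc of a right two-zone orbit; the left branch plays the same role on the other side. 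At $h=1$ all these arcs collapse onto $y=\pm x$, i.e. onto the double homoclinic loop of the central saddle, while as $h\to0^+$ the two-zone central arc shrinks to the tangency point $(1,0)$, resp. $(-1,0)$, of the central field with the switching line. This already yields the common separating value $h=1$, the lower endpoint $h=0$ of $J_1$ and $J_2$ at the tangency, and the lower endpoint $h=1$ of $J_0$.

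It remains to find the upper endpoints, which are governed by the lateral subsystems. In the saddle case (types SSS and CSS) the right singular point is a non-degenerate critical point of $H^{\scriptscriptstyle R}$ whose Hessian $\bigl(\begin{smallmatrix}-c_{\scriptscriptstyle R}&a_{\scriptscriptstyle R}\\ a_{\scriptscriptstyle R}&b_{\scriptscriptstyle R}\end{smallmatrix}\bigr)$ has determinant $-\omega_{\scriptscriptstyle RS}^2$ with $\omega_{\scriptscriptstyle RS}=\sqrt{a_{\scriptscriptstyle R}^2+b_{\scriptscriptstyle R}c_{\scriptscriptstyle R}}$, indefinite since the point is a saddle. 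Writing $H^{\scriptscriptstyle R}-H^{\scriptscriptstyle R}(\text{saddle})$ as this quadratic form and imposing that the level curve through the saddle meet $x=1$ at ordinate $h$ gives, after substituting the saddle coordinates, the two roots $h=\pm(c_{\scriptscriptstyle R}+\beta_{\scriptscriptstyle R})/\omega_{\scriptscriptstyle RS}$; the relevant one is $\tau_{\scriptscriptstyle RS}=-(c_{\scriptscriptstyle R}+\beta_{\scriptscriptstyle R})/\omega_{\scriptscriptstyle RS}$, which coincides with the stated value once $\omega_{\scriptscriptstyle RS}^2=a_{\scriptscriptstyle R}^2+b_{\scriptscriptstyle R}c_{\scriptscriptstyle R}$ is used. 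The right crossing arc exists as a bounded arc in $x\ge1$ exactly while its level stays strictly below that of the saddle, i.e. for $h<\tau_{\scriptscriptstyle RS}$, and degenerates onto the separatrix at $h=\tau_{\scriptscriptstyle RS}$. In the center case (type CSC) the Hessian is definite, $\omega_{\scriptscriptstyle RS}^2<0$, there is no real separatrix, and the right arc persists for every $h>1$; the left arcs never bound $J_1,J_2$ because there $h<1\le\tau_{\scriptscriptstyle RS}$. Assembling the three analyses gives $J_1=J_2=(0,1)$ in all cases, $J_0=(1,\tau_{\scriptscriptstyle RS})$ when the right subsystem is a saddle, and $J_0=(1,\infty)$ in the CSC case.

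I expect the main obstacle to lie in two delicate points. In type SSS both outer saddles bound the three-zone family, so a priori $\beta_0=\min\{\tau_{\scriptscriptstyle RS},\tau_{\scriptscriptstyle LS}\}$; the reduction to $\beta_0=\tau_{\scriptscriptstyle RS}$ is obtained up to the reflection $x\mapsto-x$, which interchanges the left and right subsystems and lets one label as the right side the one whose separatrix is reached first. Second, one must verify that for every $h$ in the claimed range the lateral arc really is of the correct outward-bulging type, crosses the switching line transversally (hypothesis (H2)) and closes up with clockwise orientation (hypothesis (H3)); this depends on the sign conditions distinguishing real, virtual and boundary saddles, and is precisely what produces the several admissible configurations recorded in Figures \ref{fig:03sss2}--\ref{fig:03css2} and \ref{fig:04}. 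I would settle these by writing the explicit linear flow of each lateral subsystem and comparing the position of the arc with the separatrix (or, in the center case, showing the arcs exhaust all sizes), treating each figure's configuration in turn.
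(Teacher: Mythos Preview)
Your proposal is correct and follows essentially the same approach as the paper: exploit the normal form \eqref{nf:h} with $\beta_{\scriptscriptstyle C}=0$ so that $H^{\scriptscriptstyle R}(1,\cdot)$, $H^{\scriptscriptstyle L}(-1,\cdot)$ and $H^{\scriptscriptstyle C}(\pm1,\cdot)$ are even in $y$, forcing the crossing points to be $(\pm1,\pm h)$; then the boundary $h=1$ comes from the central separatrices $y=\pm x$, and the upper bound of $J_0$ comes from the lateral separatrix ordinates $\tau_{\scriptscriptstyle RS}$, $\tau_{\scriptscriptstyle LS}$ (with the reflection $x\mapsto-x$ reducing to $\tau_{\scriptscriptstyle RS}=\min\{\tau_{\scriptscriptstyle RS},\tau_{\scriptscriptstyle LS}\}$), or is absent in the center case. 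The paper carries this out by writing and solving the four Hamiltonian matching equations explicitly, whereas you phrase the central analysis geometrically via the hyperbola $y^2-x^2=h^2-1$; these are two descriptions of the same computation, and your verification that $\tau_{\scriptscriptstyle RS}=-(c_{\scriptscriptstyle R}+\beta_{\scriptscriptstyle R})/\omega_{\scriptscriptstyle RS}$ agrees with the stated formula is correct.
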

\begin{proof}
	Firstly, suppose that the system $\eqref{eq:01}|_{\epsilon=0}$ is of type SSS or CSS. Note that, if the saddles from the right or left subsystems are virtual or boundary then we have not periodic orbits passing through the three zones. Denote by $W^u_{\scriptscriptstyle L}$, $W^u_{\scriptscriptstyle C}$ and $W^u_{\scriptscriptstyle R}$ (resp. $W^s_{\scriptscriptstyle L}$, $W^s_{\scriptscriptstyle C}$ and $W^s_{\scriptscriptstyle R}$) the unstable (resp. stable) separatrices of the saddles of the left, central and right subsystems from $\eqref{eq:01}|_{\epsilon=0}$, respectively. Let $P_{\scriptscriptstyle L}^{i}=W^i_{\scriptscriptstyle L}\cap \{(-1,y):y\in\mathbb{R}\}$, $P_{\scriptscriptstyle R}^{i}=W^i_{\scriptscriptstyle R}\cap \{(1,y):y\in\mathbb{R}\}$, $P_{\scriptscriptstyle C}^{i}=W^i_{\scriptscriptstyle C}\cap \{(-1,y):y\in\mathbb{R}\}$ and $\tilde{P}_{\scriptscriptstyle C}^{i}=W^i_{\scriptscriptstyle C}\cap \{(1,y):y\in\mathbb{R}\}$, $i=u,s$. After some compute, it is possible to show that
	$$P_{\scriptscriptstyle L}^{u}=(-1,\tau_{\scriptscriptstyle LS}),\quad P_{\scriptscriptstyle L}^{s}=(-1,-\tau_{\scriptscriptstyle LS}),\quad P_{\scriptscriptstyle R}^{u}=(1,-\tau_{\scriptscriptstyle RS}),\quad P_{\scriptscriptstyle R}^{s}=(1,\tau_{\scriptscriptstyle RS}),$$
	$$P_{\scriptscriptstyle C}^{u}=(-1,-1),\quad P_{\scriptscriptstyle C}^{s}=(-1,1),\quad \tilde{P}_{\scriptscriptstyle C}^{u}=(1,1),\quad \tilde{P}_{\scriptscriptstyle C}^{s}=(1,-1),$$
	where $\tau_{\scriptscriptstyle RS}=(a_{\scriptscriptstyle R}^2-b_{\scriptscriptstyle R}\beta_{\scriptscriptstyle R}-\omega_{\scriptscriptstyle RS}^2)/b_{\scriptscriptstyle R}\omega_{\scriptscriptstyle RS}$, $\tau_{\scriptscriptstyle LS}=(a_{\scriptscriptstyle L}^2+b_{\scriptscriptstyle L}\beta_{\scriptscriptstyle L}-\omega_{\scriptscriptstyle LS}^2)/b_{\scriptscriptstyle L}\omega_{\scriptscriptstyle LS}$, $\omega_{\scriptscriptstyle RS}=\sqrt{a^2_{\scriptscriptstyle R} + b_{\scriptscriptstyle R}c_{\scriptscriptstyle R}}$ and $\omega_{\scriptscriptstyle LS}=\sqrt{a^2_{\scriptscriptstyle L} + b_{\scriptscriptstyle L}c_{\scriptscriptstyle L}}$. Moreover, we have a symmetry between the points $P_{\scriptscriptstyle L}^{u}$ and $P_{\scriptscriptstyle L}^{s}$ (resp. $P_{\scriptscriptstyle R}^{u}$ and $P_{\scriptscriptstyle R}^{s}$) with respect to $x$-axis.  Define by $\tau$ the smallest ordinate value between the points $P_{\scriptscriptstyle R}^{s}$ and $P_{\scriptscriptstyle L}^{u}$, i.e. $\tau= \min\{\tau_{\scriptscriptstyle LS},\tau_{\scriptscriptstyle RS}\}$.
	Then, less than one reflection around the $y$-axis, we can assuming that $\tau=\tau_{\scriptscriptstyle RS}$.

\begin{figure}[h]\tiny
	\begin{center}		
		\begin{overpic}[width=4.5in]{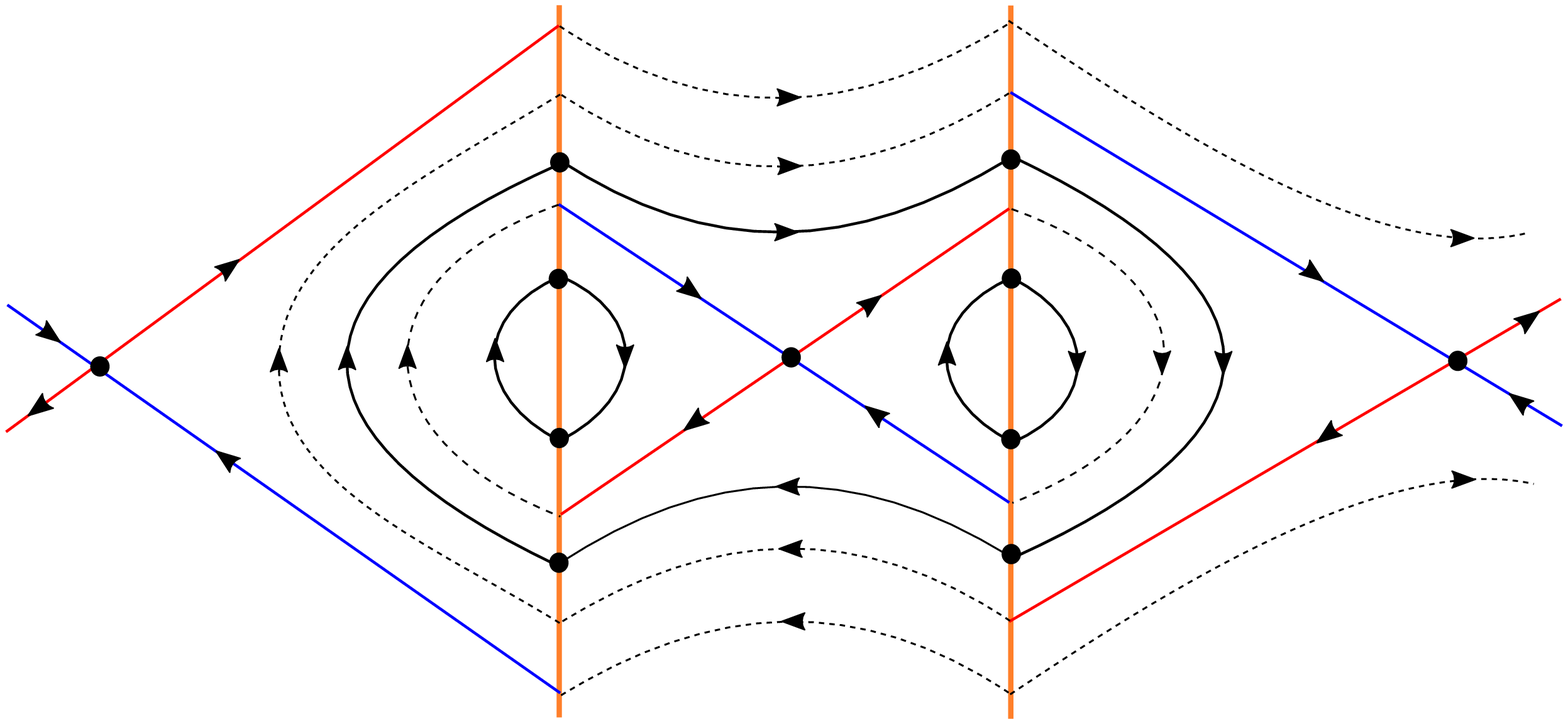}
			\put(61,-4) {$x=1$}
			\put(31,-4) {$x=-1$}
			\put(65,36.5) {$A$}
			\put(65,9) {$A_{1}$}
			\put(65,29) {$B$}
			\put(65,16) {$B_{1}$}
			\put(65,40.5) {$P_{\scriptscriptstyle R}^{s}$}
			\put(65,5) {$P_{\scriptscriptstyle R}^{u}$}
			\put(32,8.5) {$A_2$}
			\put(31.5,36) {$A_3$}
			\put(33,28.5) {$C$}
			\put(32,16) {$C_{1}$}
			\put(31.5,45) {$P_{\scriptscriptstyle L}^{u}$}
			\put(31.5,0) {$P_{\scriptscriptstyle L}^{s}$}									
		\end{overpic}
	\end{center}
	\vspace{0.7cm}
	\caption{Phase portrait of system $\eqref{eq:01}|_{\epsilon=0}$ of the type SSS with $\tau_{\scriptscriptstyle RS}\ne\tau_{\scriptscriptstyle LS}$.}\label{fig:03sss2}
\end{figure} 

\begin{figure}[h]\tiny
	\begin{center}		
		\begin{overpic}[width=4.5in]{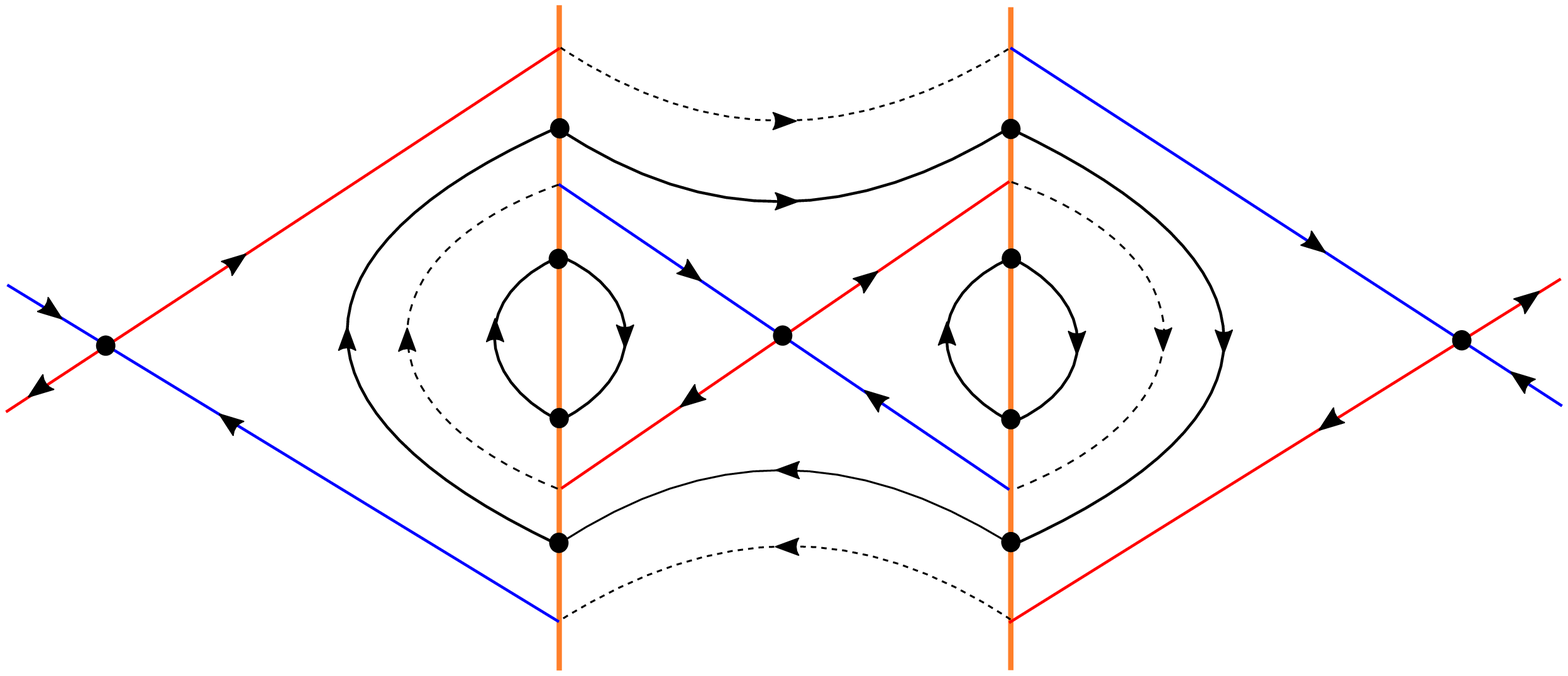}
			\put(61,-3) {$x=1$}
			\put(31,-3) {$x=-1$}
			\put(65,35.5) {$A$}
			\put(65,6.6) {$A_{1}$}
			\put(65,27) {$B$}
			\put(65,14.5) {$B_{1}$}
			\put(65,40) {$P_{\scriptscriptstyle R}^{s}$}
			\put(65,1.5) {$P_{\scriptscriptstyle R}^{u}$}
			\put(32,6.6) {$A_2$}
			\put(32,35.5) {$A_3$}
			\put(33,27) {$C$}
			\put(32,14.5) {$C_{1}$}
			\put(31.5,40.5) {$P_{\scriptscriptstyle L}^{u}$}
			\put(31.5,1.5) {$P_{\scriptscriptstyle L}^{s}$}									
		\end{overpic}
	\end{center}
	\vspace{0.7cm}
	\caption{Phase portrait of system $\eqref{eq:01}|_{\epsilon=0}$ of the type SSS with $\tau_{\scriptscriptstyle RS}=\tau_{\scriptscriptstyle LS}$.}\label{fig:03sss1}
\end{figure} 

\begin{figure}[h]\tiny
	\begin{center}		
		\begin{overpic}[width=4in]{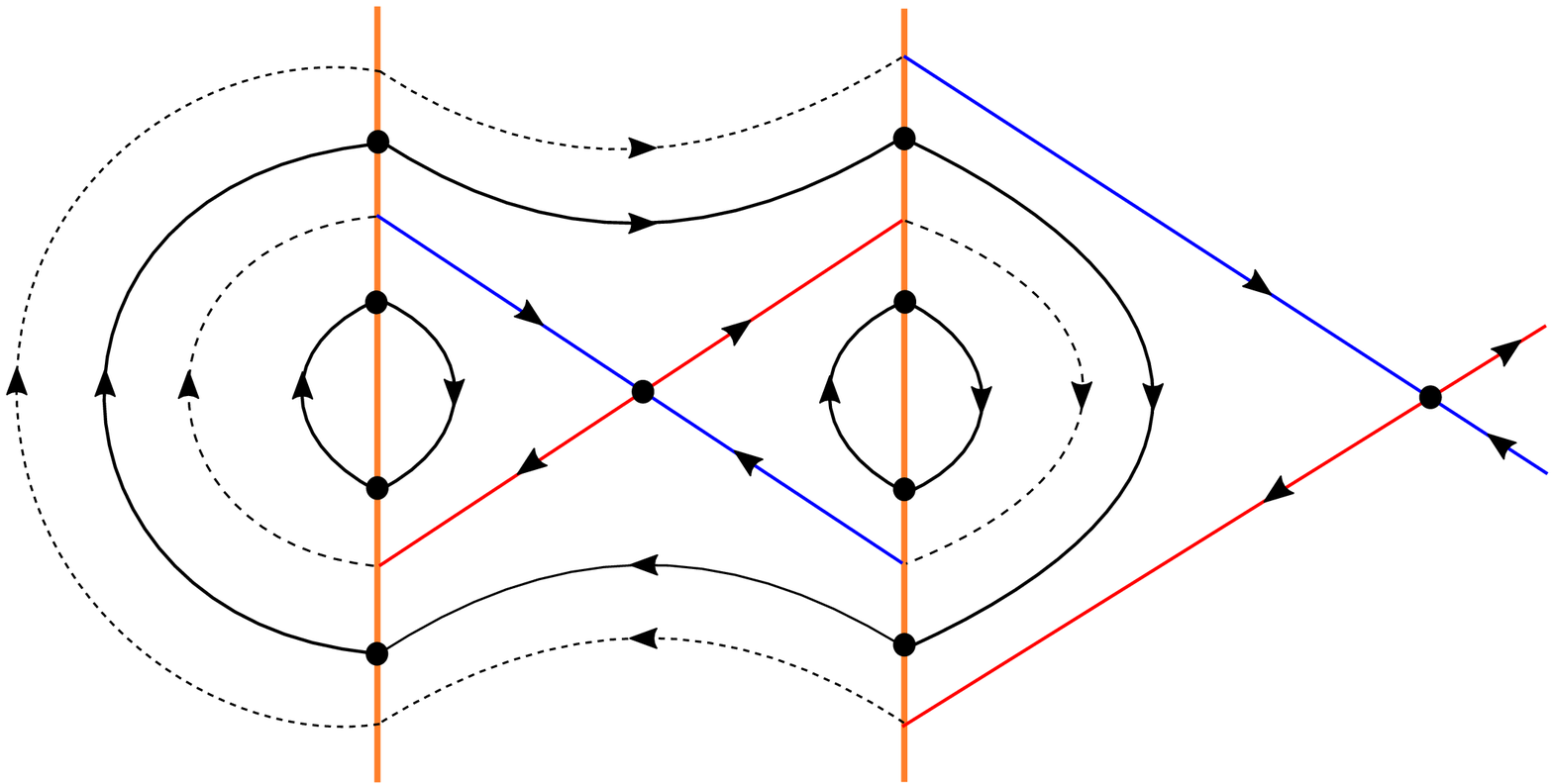}
			\put(55,-4) {$x=1$}
			\put(20,-4) {$x=-1$}
			\put(59,42) {$A$}
			\put(58.5,7) {$A_{1}$}
			\put(59,31.5) {$B$}
			\put(59,17) {$B_{1}$}
			\put(59,47) {$P_{\scriptscriptstyle R}^{s}$}
			\put(59,2) {$P_{\scriptscriptstyle R}^{u}$}
			\put(19,6) {$A_2$}
			\put(19,42) {$A_3$}
			\put(21,31) {$C$}
			\put(20,17) {$C_{1}$}								
		\end{overpic}
	\end{center}
	\vspace{0.7cm}
	\caption{Phase portrait of system $\eqref{eq:01}|_{\epsilon=0}$ of the type CSS with a virtual center in the left subsystem.}\label{fig:03css1}
\end{figure} 

\begin{figure}[h]\tiny
	\begin{center}		
		\begin{overpic}[width=4in]{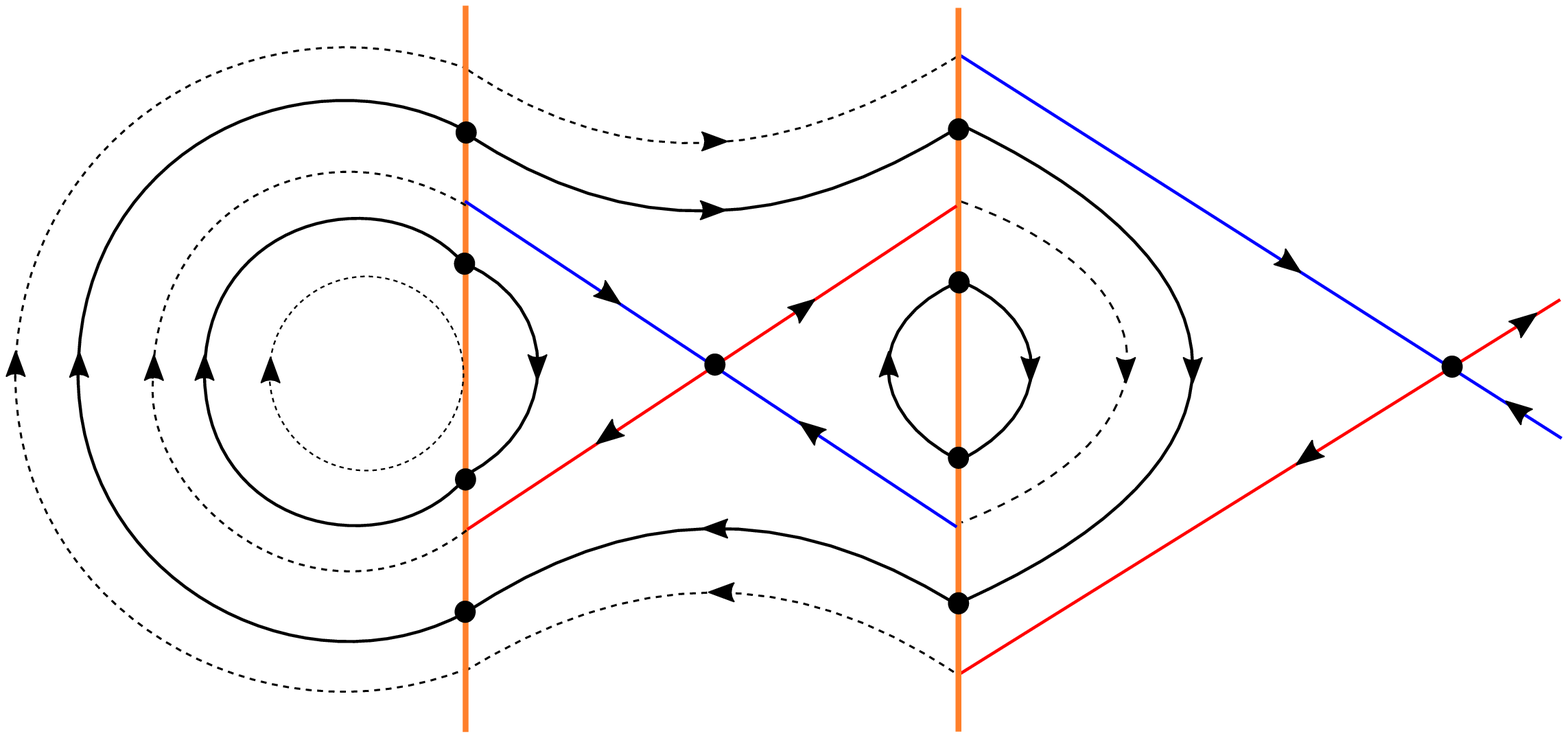}
			\put(58,-4) {$x=1$}
			\put(25,-4) {$x=-1$}
			\put(62,39) {$A$}
			\put(61.5,7) {$A_{1}$}
			\put(62,29.5) {$B$}
			\put(62,16) {$B_{1}$}
			\put(62,44) {$P_{\scriptscriptstyle R}^{s}$}
			\put(62,2) {$P_{\scriptscriptstyle R}^{u}$}
			\put(25,8) {$A_2$}
			\put(25.5,37) {$A_3$}
			\put(26.5,28.5) {$C$}
			\put(25,15.5) {$C_{1}$}								
		\end{overpic}
	\end{center}
	\vspace{0.7cm}
	\caption{Phase portrait of system $\eqref{eq:01}|_{\epsilon=0}$ of the type CSS with a real center in the left subsystem.}\label{fig:03css2}
\end{figure} 
Note that, if system $\eqref{eq:01}|_{\epsilon=0}$ is of the type SSS and the ordinates of the points $P_{\scriptscriptstyle R}^{s}$ and $P_{\scriptscriptstyle L}^{u}$ are distinct, i.e. $\tau_{\scriptscriptstyle RS}\ne \tau_{\scriptscriptstyle LS}$ (see Figure \ref{fig:03sss2})  or if system $\eqref{eq:01}|_{\epsilon=0}$ is of the type CSS (see Figure \ref{fig:03css1}--\ref{fig:03css2}), then we have a homoclinic loop passing through the points $P_{\scriptscriptstyle R}^{s}$ and $P_{\scriptscriptstyle R}^{u}$. Otherwise, if system $\eqref{eq:01}|_{\epsilon=0}$ is of the type SSS and the points $P_{\scriptscriptstyle R}^{s}$ and $P_{\scriptscriptstyle L}^{u}$ have the same ordinate, i.e. $\tau_{\scriptscriptstyle RS}= \tau_{\scriptscriptstyle LS}$ (see Figure \ref{fig:03sss1}), then we have a hetoclinic orbit passing through the points $P_{\scriptscriptstyle R}^{s}$, $P_{\scriptscriptstyle R}^{u}$, $P_{\scriptscriptstyle L}^{s}$ and $P_{\scriptscriptstyle L}^{u}$. Moreover, the central subsystem from $\eqref{eq:01}|_{\epsilon=0}$ have two tangent points $P_{\scriptscriptstyle R}=(1,0)$ and $P_{\scriptscriptstyle L}=(-1,0)$.  The Figures \ref{fig:03sss2}--\ref{fig:03css2} shows the possibles phase portraits of the system $\eqref{eq:01}|_{\epsilon=0}$ of the type SSS and CSS.

Consider a initial point of form $A(h)=(1,h)$, with $h\in J_0=(1,\tau_{\scriptscriptstyle RS})$. By the hypothesis (H3), the system $\eqref{eq:01}|_{\epsilon=0}$ has a family of crossing periodic orbits that intersects the straight lines $x=\pm1$ at four points, $A(h)$, $A_1(h)=(1,a_1(h))$, with $a_1(h)<h$, and $A_2(h)=(-1,a_2(h))$, $A_3(h)=(-1,a_3(h))$, with $a_2(h)<a_3(h)$, satisfying
\begin{equation*}
	\begin{aligned}
		& H^{\scriptscriptstyle R}(A(h))=H^{\scriptscriptstyle R}(A_1(h)), \\
		& H^{\scriptscriptstyle C}(A_1(h))=H^{\scriptscriptstyle C}(A_2(h)), \\
		& H^{\scriptscriptstyle L}(A_2(h))=H^{\scriptscriptstyle L}(A_3(h)), \\
		& H^{\scriptscriptstyle C}(A_3(h))=H^{\scriptscriptstyle C}(A(h)), 
	\end{aligned}
\end{equation*}
where $H^{\scriptscriptstyle R}$,  $H^{\scriptscriptstyle C}$ and  $H^{\scriptscriptstyle L}$ are given by the normal form from Proposition \ref{fn:01}. More precisely, we have the equations
\begin{equation*}
	\begin{aligned}
		& \frac{b_{\scriptscriptstyle R}}{2}(h-a_1(h))(h+a_1(h))=0, \\
		& \frac{1}{2}(a_1(h)-a_2(h))(a_1(h)+a_2(h))=0, \\
		& \frac{b_{\scriptscriptstyle L}}{2}(a_2(h)-a_3(h))(a_2(h)+a_3(h))=0, \\
		& \frac{1}{2}(a_3(h)-h)(a_3(h)+h)=0. 
	\end{aligned}
\end{equation*}
As $a_1(h)<h$, $a_2(h)<a_3(h)$, $b_{\scriptscriptstyle R}>0$ and $b_{\scriptscriptstyle L}>0$, the only solution of system above is $a_1(h)=a_2(h)=-h$ and $a_3(h)=h$, i.e. we have the four points given by $A(h)=(1,h)$, $A_1(h)=(1,-h)$, $A_2(h)=(-1,-h)$ and $A_3(h)=(-1,h)$. Moreover, for each $h\in J_0$, system  $\eqref{eq:01}|_{\epsilon=0}$ has a periodic orbit $L^0_h=\widehat{AA_1}\cup\widehat{A_1A_2}\cup\widehat{A_2A_3}\cup\widehat{A_3A}$ passing through these points. 

Similarly the previous case, consider two initial points of the form $B(h)=(1,h)$, with $h\in J_1=(0,1)$, and $C(h)=(-1,h)$, with $h\in J_2=(0,1)$. By hypothesis (H3), the system $\eqref{eq:01}|_{\epsilon=0}$ has two family of crossing periodic orbits passing through the two zones with clockwise orientation. The first one,  intersects the straight line $x=1$ at two points, $B(h)$ and $B_1(h)=(1,b_1(h))$, with $b_1(h)<h$. The second one, intersects the straight line $x=-1$ at two points, $C(h)$ and $C_1(h)=(1,c_1(h))$, with $c_1(h)<h$. Moreover, these family of crossing periodic orbits satisfying the following equations
	\begin{equation*}
	\begin{aligned}
		& H^{\scriptscriptstyle R}(B(h))=H^{\scriptscriptstyle R}(B_1(h)), \\
		& H^{\scriptscriptstyle C}(B_1(h))=H^{\scriptscriptstyle C}(B(h)), \\
		& H^{\scriptscriptstyle C}(C(h))=H^{\scriptscriptstyle C}(C_1(h)), \\
		& H^{\scriptscriptstyle L}(C_1(h))=H^{\scriptscriptstyle L}(C(h)).
	\end{aligned}
\end{equation*}
	More precisely, we have the equations 
\begin{equation*}
	\begin{aligned}
		& \frac{b_{\scriptscriptstyle R}}{2}(h-b_1(h))(h+b_1(h))=0, \\
		& \frac{1}{2}(b_1(h)-h)(b_1(h)+h)=0,\\
		& \frac{1}{2}(h-c_1(h))(h+c_1(h))=0, \\
		& \frac{b_{\scriptscriptstyle L}}{2}(c_1(h)-h)(c_1(h)+h)=0.
	\end{aligned}
\end{equation*}
As $b_1(h)<h$, $c_1(h)<h$, $b_{\scriptscriptstyle R}>0$ and $b_{\scriptscriptstyle L}>0$, the only solution of system above is $b_1(h)=c_1(h)=-h$, i.e. we have the four points given by $B(h)=(1,h)$, $B_1(h)=(1,-h)$, $C(h)=(-1,h)$ and $C_1(h)=(-1,-h)$. Moreover, system  $\eqref{eq:01}|_{\epsilon=0}$ has two periodic orbit $L^i_h$,  with $h\in J_i$, for $i=1,2$, passing through these points. If $h\in[\tau_{\scriptscriptstyle RS},\infty)$ then the orbit of the system $\eqref{eq:01}|_{\epsilon=0}$ with initial condition in $A(h)$ do not return to straight line $x=1$ to positive times, i.e. the system $\eqref{eq:01}|_{\epsilon=0}$ has no periodic orbit passing thought the point $A(h)$. Therefore, if $h\in(1,\tau_{\scriptscriptstyle RS})$ the system $\eqref{eq:01}|_{\epsilon=0}$ has a periodic annulus, formed by the periodic orbits $L^0_h$, limited by one double homoclinic loop at $h=1$ and either a homoclinic loop (see Figures \ref{fig:03sss2}--\ref{fig:03css2}) or two heteroclinic orbits (see Fig. \ref{fig:03sss1}) at $h=\tau_{\scriptscriptstyle RS}$. Now, if $h\in(0,1)$ the system $\eqref{eq:01}|_{\epsilon=0}$ has two periodic annulus, formed by the periodic orbits $L^i_h$, $i=1,2$, and limited by homoclinic loops. Therefore, item (a) is proven.

To prove item (b), suppose that the system $\eqref{eq:01}|_{\epsilon=0}$ is of type CSC. As in the previous cases, for each $h\in(1,\infty)$ we have a periodic orbit $L^0_h$ passing through the points $A(h)=(1,h)$, $A_1(h)=(1,-h)$, $A_2(h)=(-1,-h)$ and $A_3(h)=(-1,h)$. Therefore, the system $\eqref{eq:01}|_{\epsilon=0}$ has a continuum of periodic orbits formed by the periodic orbits $L^0_h$, with $h\in(1,\infty)$, limited by one double homoclinic loop (see Fig. \ref{fig:04} (a)--(c)). Now, if $h\in(0,1)$ the system $\eqref{eq:01}|_{\epsilon=0}$ has two periodic annulus, formed by the periodic orbits $L^i_h$, $i=1,2$, with $L^1_h$ passing through the points $B(h)=(1,h)$, $B_1(h)=(1,-h)$ and $L^2_h$ passing through the points $C(h)=(-1,h)$, $C_1(h)=(-1,-h)$, and limited by the homoclinic loops.  The Figure \ref{fig:04} shows the possibles phase portraits of the system $\eqref{eq:01}|_{\epsilon=0}$ of the type CSC.
\begin{figure}[h]\tiny
	\begin{center}		
		\begin{overpic}[width=6in]{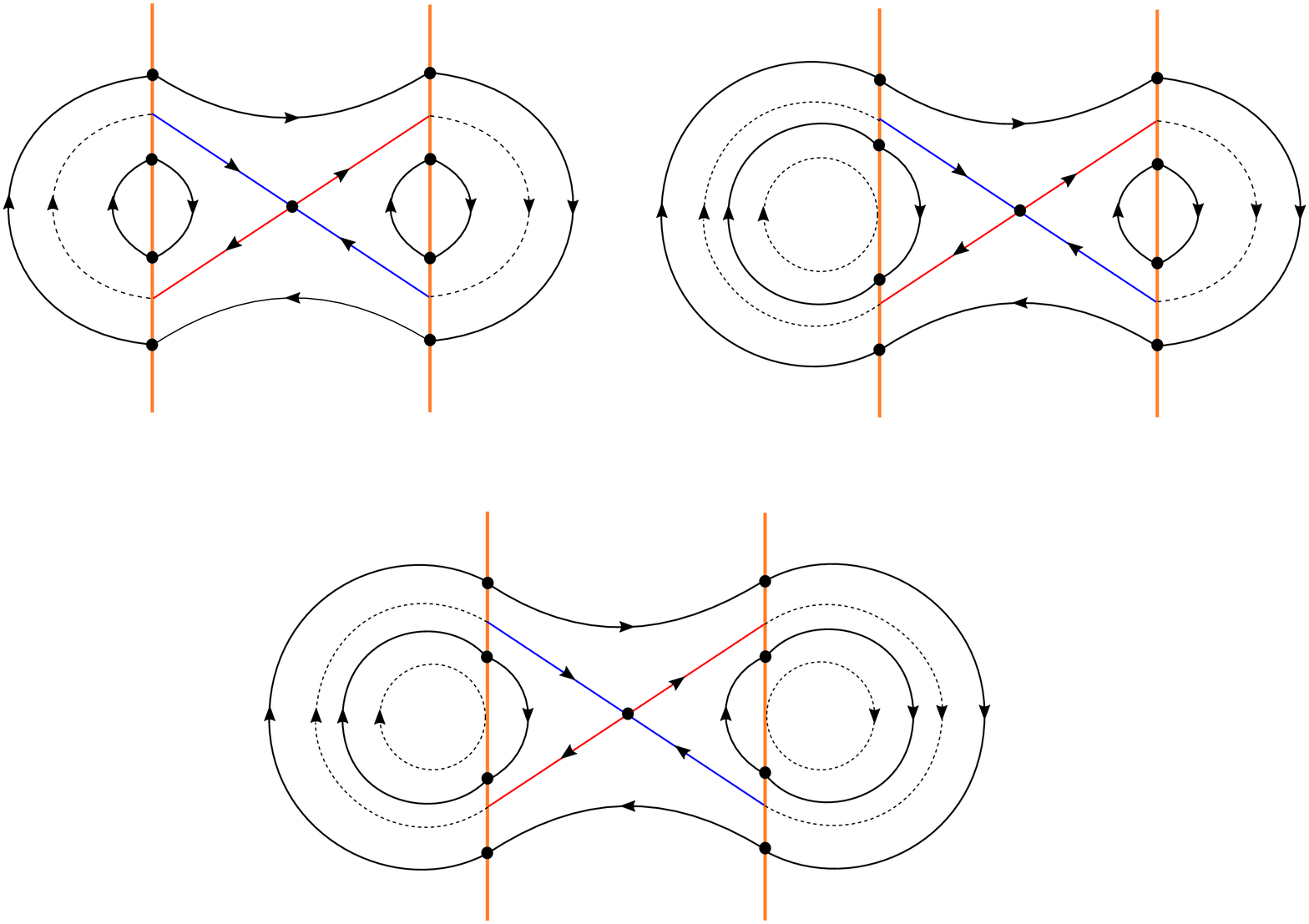}
			\put(86,37) {$x=1$}
			\put(64,37) {$x=-1$}
			\put(89,65) {$A$}
			\put(89,42.5) {$A_{1}$}
			\put(89,58.5) {$B$}
			\put(89.2,49) {$B_{1}$}
			\put(64.5,66) {$A_3$}
			\put(64.5,41) {$A_2$}
			\put(65,58) {$C$}
			\put(64.5,49) {$C_1$}
			\put(77,37) {\normalsize (b)}
			\put(30.5,37) {$x=1$}
			\put(8,37) {$x=-1$}
			\put(33.5,65.5) {$A$}
			\put(33.5,42.7) {$A_1$}
			\put(8.5,42.7) {$A_2$}
			\put(8.5,65.5) {$A_3$}
			\put(33.5,58.5) {$B$}
			\put(33.5,49.5) {$B_{1}$}
			\put(9.5,58.5) {$C$}
			\put(9,49.5) {$C_1$}
			\put(20,37) {\normalsize (a)}
			\put(56,-2) {$x=1$}
			\put(34,-2) {$x=-1$}
			\put(58.7,27) {$A$}
			\put(59,2.7) {$A_1$}
			\put(59,19.3) {$B$}
			\put(59,10.7) {$B_1$}
			\put(34.5,2.7) {$A_2$}
			\put(35.5,27) {$A$}
			\put(35,19.3) {$C$}
			\put(34.5,10.7) {$C_1$}
			\put(46.5,-2) {\normalsize (c)}
		\end{overpic}
	\end{center}
	\vspace{0.7cm}
	\caption{Phase portrait of system $\eqref{eq:01}|_{\epsilon=0}$ of the type CSC when (a) left and right subsystems have virtual centers; (b) left subsystem has a real center and right subsystems has a virtual center; (c) left and right subsystems have real centers.}\label{fig:04}
\end{figure}
\end{proof}

As, for $h\in J_0$, $A(h)=(1,h)$, $A_1(h)=(1,-h)$, $A_2(h)=(-1,-h)$,  $A_3(h)=(-1,h)$ and, for $h\in J_i$, $i=1,2$, $B(h)=(1,h)$, $B_1(h)=(1,-h)$,  $C(h)=(-1,h)$ and $C_1(h)=(-1,-h)$, we have the follow immediate corollary. 
\begin{corollary}\label{col:mel}
	For $h\in J_0$, we have that
	$$
	\frac{H_y^{\scriptscriptstyle R}(A)}{H_y^{\scriptscriptstyle C}(A)}=\frac{H_y^{\scriptscriptstyle R}(A)H_y^{\scriptscriptstyle C}(A_3)H_y^{\scriptscriptstyle L}(A_2)}{H_y^{\scriptscriptstyle C}(A)H_y^{\scriptscriptstyle L}(A_3)H_y^{\scriptscriptstyle C}(A_2)}=b_{\scriptscriptstyle R},\quad \frac{H_y^{\scriptscriptstyle R}(A)H_y^{\scriptscriptstyle C}(A_3)}{H_y^{\scriptscriptstyle C}(A)H_y^{\scriptscriptstyle L}(A_3)}=\frac{b_{\scriptscriptstyle R}}{b_{\scriptscriptstyle L}},$$
	$$\frac{H_y^{\scriptscriptstyle R}(A)H_y^{\scriptscriptstyle C}(A_3)H_y^{\scriptscriptstyle L}(A_2)H_y^{\scriptscriptstyle C}(A_1)}{H_y^{\scriptscriptstyle C}(A)H_y^{\scriptscriptstyle L}(A_3)H_y^{\scriptscriptstyle C}(A_2)H_y^{\scriptscriptstyle R}(A_1)}=1,
	$$
	and for $h\in J_i$, $i=1,2$, we have that
	$$\frac{H_y^{\scriptscriptstyle R}(B)}{H_y^{\scriptscriptstyle C}(B)}=b_{\scriptscriptstyle R},\quad \frac{H_y^{\scriptscriptstyle C}(C)}{H_y^{\scriptscriptstyle L}(C)}=\frac{1}{b_{\scriptscriptstyle L}},\quad\frac{H_y^{\scriptscriptstyle R}(B)H_y^{\scriptscriptstyle C}(B_1)}{H_y^{\scriptscriptstyle C}(B)H_y^{\scriptscriptstyle R}(B_1)}=\frac{H_y^{\scriptscriptstyle C}(C)H_y^{\scriptscriptstyle L}(C_1)}{H_y^{\scriptscriptstyle L}(C)H_y^{\scriptscriptstyle C}C_1)}=1.$$
	Then, the first order Melnikov functions associated to system \eqref{eq:01} can be rewritten as
\begin{eqnarray}
	&& M_0(h) = b_{\scriptscriptstyle R}\int_{\widehat{A_3A}}g_{\scriptscriptstyle C}dx-f_{\scriptscriptstyle C}dy+\frac{b_{\scriptscriptstyle R}}{b_{\scriptscriptstyle L}} \int_{\widehat{A_2A_3}}g_{\scriptscriptstyle L}dx-f_{\scriptscriptstyle L}dy \label{mel:M0}\\
	&& \quad\quad\quad \,\,\,\, + b_{\scriptscriptstyle R}\int_{\widehat{A_1A_2}}g_{\scriptscriptstyle C}dx-f_{\scriptscriptstyle C}dy+\int_{\widehat{AA_1}}g_{\scriptscriptstyle R}dx-f_{\scriptscriptstyle R}dy,\nonumber\\
	&& M_1(h) = b_{\scriptscriptstyle R}\int_{\widehat{B_1B}}g_{\scriptscriptstyle C}dx-f_{\scriptscriptstyle C}dy+\int_{\widehat{BB_1}}g_{\scriptscriptstyle R}dx-f_{\scriptscriptstyle R}dy,\label{mel:M1}\\
	&& M_2(h) = \frac{1}{b_{\scriptscriptstyle L}}\int_{\widehat{C_1C}}g_{\scriptscriptstyle L}dx-f_{\scriptscriptstyle L}dy+\int_{\widehat{CC_1}}g_{\scriptscriptstyle C}dx-f_{\scriptscriptstyle C}dy\label{mel:M2}.
\end{eqnarray}	
\end{corollary}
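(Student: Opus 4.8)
The plan is to prove the Corollary by direct computation: first establish the seven displayed ratio identities by explicit evaluation, and then substitute the resulting constants into the Melnikov formulas of Theorem \ref{teo:mel}. I would begin by computing the $y$-derivatives of the three pieces of the normal form \eqref{nf:h} with $\beta_{\scriptscriptstyle C}=0$, namely
$$H_y^{\scriptscriptstyle L}(x,y)=b_{\scriptscriptstyle L}y+a_{\scriptscriptstyle L}(x+1),\quad H_y^{\scriptscriptstyle C}(x,y)=y,\quad H_y^{\scriptscriptstyle R}(x,y)=b_{\scriptscriptstyle R}y+a_{\scriptscriptstyle R}(x-1).$$
The decisive structural feature, which is precisely what the normal form of Proposition \ref{fn:01} was arranged to produce (via the choices $\alpha_{\scriptscriptstyle L}=a_{\scriptscriptstyle L}$ and $\alpha_{\scriptscriptstyle R}=-a_{\scriptscriptstyle R}$), is that the affine $x$-part of $H_y^{\scriptscriptstyle L}$ vanishes on the line $x=-1$ and that of $H_y^{\scriptscriptstyle R}$ on the line $x=1$; since all the points $A,A_1,A_2,A_3,B,B_1,C,C_1$ lie on $x=\pm1$, every one of these derivatives reduces on the relevant switching line to a pure multiple of the ordinate.

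Next I would evaluate at the symmetric coordinates recorded just before the Corollary. This gives, for instance, $H_y^{\scriptscriptstyle R}(A)=b_{\scriptscriptstyle R}h$, $H_y^{\scriptscriptstyle C}(A)=h$, $H_y^{\scriptscriptstyle L}(A_3)=b_{\scriptscriptstyle L}h$, $H_y^{\scriptscriptstyle C}(A_3)=h$, $H_y^{\scriptscriptstyle L}(A_2)=-b_{\scriptscriptstyle L}h$, $H_y^{\scriptscriptstyle C}(A_2)=-h$, $H_y^{\scriptscriptstyle R}(A_1)=-b_{\scriptscriptstyle R}h$, $H_y^{\scriptscriptstyle C}(A_1)=-h$, and the analogous values at the $B$- and $C$-points. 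Because each factor is proportional to $\pm h$, in every one of the telescoping products appearing in the Corollary the powers of $h$ cancel identically, leaving only a product of the constants $b_{\scriptscriptstyle R}$, $b_{\scriptscriptstyle L}$ and their reciprocals; this yields the values $b_{\scriptscriptstyle R}$, $b_{\scriptscriptstyle R}/b_{\scriptscriptstyle L}$, $1$ for the $M_0$ coefficients, the values $b_{\scriptscriptstyle R}$ and $1$ for $M_1$, and $1/b_{\scriptscriptstyle L}$ and $1$ for $M_2$.

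Finally I would substitute these constants into the expressions of $M_0,M_1,M_2$ from Theorem \ref{teo:mel}, reading off \eqref{mel:M0}--\eqref{mel:M2} directly; the only cosmetic point is that the integrand of $I_{\scriptscriptstyle R}^{\scriptscriptstyle 0}$ over $\widehat{AA_1}$ is the right-hand field $g_{\scriptscriptstyle R},f_{\scriptscriptstyle R}$ (and likewise $g_{\scriptscriptstyle R},f_{\scriptscriptstyle R}$ over $\widehat{BB_1}$, $g_{\scriptscriptstyle L},f_{\scriptscriptstyle L}$ over $\widehat{C_1C}$), since those arcs lie in the regions $x\ge 1$ and $x\le-1$ respectively. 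There is no genuine obstacle here, which is why the statement is labeled an immediate corollary: the entire content is the algebraic cancellation of the ordinates $\pm h$, guaranteed by the symmetric crossing points established in the preceding proposition together with the special shape of the normal form. The only mild bookkeeping risk is sign tracking — the factors $-b_{\scriptscriptstyle L}h$, $-h$ at $A_2$ and the corresponding negative entries at $A_1$, $B_1$, $C_1$ — but these always occur in matched pairs in numerator and denominator and so cancel without affecting the result.
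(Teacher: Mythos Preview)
Your proposal is correct and is exactly the direct computation the paper has in mind when it labels the result an immediate corollary: evaluate $H_y^{\scriptscriptstyle L},H_y^{\scriptscriptstyle C},H_y^{\scriptscriptstyle R}$ from the normal form \eqref{nf:h} at the symmetric crossing points supplied by the preceding proposition, observe that on the switching lines each derivative is a constant multiple of the ordinate, cancel the factors $\pm h$, and substitute into Theorem~\ref{teo:mel}. Your observation about the normal form forcing the affine part of $H_y^{\scriptscriptstyle R}$ (resp.\ $H_y^{\scriptscriptstyle L}$) to vanish on $x=1$ (resp.\ $x=-1$) is precisely the mechanism that makes the cancellation work, and your sign bookkeeping is accurate.
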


\section{Proof of Theorem \ref{the:01}}\label{sec:Teo}
In order to prove the Theorem \ref{the:01}, we will simplify the expressions of the first order Melnikov functions associated with system \eqref{eq:01} when the system $\eqref{eq:01}|_{\epsilon=0}$ is of the type SSS, CSS and CSC. For this, we define the follows functions: 
\begin{equation}\label{eq:func}
	\begin{aligned}
		& f_0(h)  =  h,  \quad h\in(0,\infty),\\
		& f_{\scriptscriptstyle C}^{0}(h)  = (h^2-1)\log\bigg(\frac{h+1}{h-1}\bigg),\quad h\in(1,\infty), \\
		& f_{\scriptscriptstyle C}(h)  = (h^2-1)\log\bigg(\frac{h+1}{1-h}\bigg),\quad h\in(0,1), \\
		& f_{\scriptscriptstyle RC}(h)  = (h^2+\tau_{\scriptscriptstyle RC}^2)\bigg(2\pi\mu_1+(-1)^{\mu_1}\arccos\bigg(\frac{\tau_{\scriptscriptstyle RC}^2-h^2}{h^2+\tau_{\scriptscriptstyle RC}^2}\bigg)\bigg),\quad h\in(0,\infty),\\
		& f_{\scriptscriptstyle LC}(h)  = (h^2+\tau_{\scriptscriptstyle LC}^2)\bigg(2\pi\mu_2+(-1)^{\mu_2}\arccos\bigg(\frac{\tau_{\scriptscriptstyle LC}^2-h^2}{h^2+\tau_{\scriptscriptstyle LC}^2}\bigg)\bigg),\quad h\in(0,\infty),\\
		& f_{\scriptscriptstyle RS}(h)  = (h^2-\tau_{\scriptscriptstyle RS}^2)\log\bigg(\frac{h+\tau_{\scriptscriptstyle RS}}{\tau_{\scriptscriptstyle RS}-h}\bigg),\quad h\in(0,1)\cup(1,\tau_{\scriptscriptstyle RS}),\\
		& f_{\scriptscriptstyle LS}(h)  = (h^2-\tau_{\scriptscriptstyle LS}^2)\log\bigg(\frac{h+\tau_{\scriptscriptstyle LS}}{\tau_{\scriptscriptstyle LS}-h}\bigg),\quad h\in(0,1)\cup(1,\tau_{\scriptscriptstyle RS}),
	\end{aligned}
\end{equation}	
with $\tau_{\scriptscriptstyle RC}=(a_{\scriptscriptstyle R}^2-b_{\scriptscriptstyle R}\beta_{\scriptscriptstyle R}+\omega_{\scriptscriptstyle RC}^2)/b_{\scriptscriptstyle R}\omega_{\scriptscriptstyle RC}$, $\tau_{\scriptscriptstyle LC}=(a_{\scriptscriptstyle L}^2+b_{\scriptscriptstyle L}\beta_{\scriptscriptstyle L}+\omega_{\scriptscriptstyle LC}^2)/b_{\scriptscriptstyle L}\omega_{\scriptscriptstyle LC}$, $\tau_{\scriptscriptstyle RS}=(a_{\scriptscriptstyle R}^2-b_{\scriptscriptstyle R}\beta_{\scriptscriptstyle R}-\omega_{\scriptscriptstyle RS}^2)/b_{\scriptscriptstyle R}\omega_{\scriptscriptstyle RS}$, $\tau_{\scriptscriptstyle LS}=(a_{\scriptscriptstyle L}^2+b_{\scriptscriptstyle L}\beta_{\scriptscriptstyle L}-\omega_{\scriptscriptstyle LS}^2)/b_{\scriptscriptstyle L}\omega_{\scriptscriptstyle LS}$, $\omega_{i{\scriptscriptstyle C}}=\sqrt{-a^2_{\scriptscriptstyle i} - b_{\scriptscriptstyle i}c_{\scriptscriptstyle i}}$ and $\omega_{i{\scriptscriptstyle S}}=\sqrt{a^2_{\scriptscriptstyle i} + b_{\scriptscriptstyle i}c_{\scriptscriptstyle i}}$, for $i=L,R$.

\begin{theorem}\label{theo:sss}
	Suppose that system $\eqref{eq:01}|_{\epsilon=0}$ is of the type SSS. Then the first order Melnikov functions given by  \eqref{mel:M0}, \eqref{mel:M1} and \eqref{mel:M2} associated with system \eqref{eq:01} can be expressed, when $\tau_{\scriptscriptstyle LS}\ne\tau_{\scriptscriptstyle RS}$, as	
	\begin{eqnarray}
		&& M_0(h)=k_0^0f_0(h)+k_0^1f_{\scriptscriptstyle C}^{0}(h)+k_0^2f_{\scriptscriptstyle RS}(h)+k_0^3f_{\scriptscriptstyle LS}(h),\quad h\in(1,\tau_{\scriptscriptstyle RS}),\label{mel:sss00}\\
		&& M_1(h)=k_1^0f_0(h)+k_1^1f_{\scriptscriptstyle C}(h)+k_1^2f_{\scriptscriptstyle RS}(h),\quad\quad\quad\quad\quad\quad h\in(0,1),\label{mel:sss10}\\
		&& M_2(h)=k_2^0f_0(h)+k_2^1f_{\scriptscriptstyle C}(h)+k_2^2f_{\scriptscriptstyle LS}(h),\quad\quad\quad\quad\quad\quad h\in(0,1)\label{mel:sss20},
	\end{eqnarray}
and, when $\tau_{\scriptscriptstyle LS}=\tau_{\scriptscriptstyle RS}$, as
	\begin{eqnarray}
	&& M_0(h)=\tilde{k}_0^0f_0(h)+\tilde{k}_0^1f_{\scriptscriptstyle C}^{0}(h)+\tilde{k}_0^2f_{\scriptscriptstyle RS}(h),\quad h\in(1,\tau_{\scriptscriptstyle RS}),\label{mel:sss01}\\
	&& M_1(h)=\tilde{k}_1^0f_0(h)+\tilde{k}_1^1f_{\scriptscriptstyle C}(h)+\tilde{k}_1^2f_{\scriptscriptstyle RS}(h),\quad h\in(0,1),\label{mel:sss11}\\
	&& M_2(h)=\tilde{k}_2^0f_0(h)+\tilde{k}_2^1f_{\scriptscriptstyle C}(h)+\tilde{k}_2^2f_{\scriptscriptstyle RS}(h),\quad h\in(0,1)\label{mel:sss21},
\end{eqnarray}
 where the functions $f_0(h)$, $f_{\scriptscriptstyle C}^0(h)$, $f_{\scriptscriptstyle C}(h)$, $f_{\scriptscriptstyle RS}(h)$ and $f_{\scriptscriptstyle LS}(h)$ are the ones defined in \eqref{eq:func}. Here the coefficients $k_i^j$ and $\tilde{k}_i^j$, for $i=0,1,2$ and $j=0,1,2,3$, depend on the parameters of system \eqref{eq:01}.
\end{theorem}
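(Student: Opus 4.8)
The plan is to evaluate directly the line integrals furnished by Corollary \ref{col:mel}. Each of $M_0,M_1,M_2$ is a finite sum of integrals $\int_{\gamma}g_{\sigma}\,dx-f_{\sigma}\,dy$, where $\gamma$ is one of the oriented arcs $\widehat{A_3A},\widehat{A_2A_3},\widehat{A_1A_2},\widehat{AA_1}$ (for $M_0$) or the two-zone arcs $\widehat{B_1B},\widehat{BB_1},\widehat{C_1C},\widehat{CC_1}$ (for $M_1,M_2$), and $\sigma\in\{L,C,R\}$ labels the zone containing $\gamma$. The structural fact that drives the whole proof is that each arc lies in a single zone, where $H^{\sigma}$ is quadratic and the perturbation $(f_{\sigma},g_{\sigma})$ is affine; thus every integrand is an affine $1$-form along an arc of a conic, and the argument reduces to computing a short list of elementary integrals and collecting them.

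First I would treat the central arcs, where $H^{C}(x,y)=(y^2-x^2)/2$ has the hyperbolic level set $y^2-x^2=h^2-1$ through the crossing points. For the three-zone orbits ($h\in(1,\tau_{RS})$, so $h^2-1>0$) the arcs $\widehat{A_3A}$ and $\widehat{A_1A_2}$ are the branches $y=\pm\sqrt{x^2+h^2-1}$ with $x\in[-1,1]$, while for the two-zone orbits ($h\in(0,1)$, so $h^2-1<0$) the arcs $\widehat{B_1B}$ and $\widehat{CC_1}$ lie on $x^2-y^2=1-h^2$ and are parametrized by $y\in[-h,h]$. Substituting these parametrizations into $g_{C}\,dx-f_{C}\,dy$ and using the primitives of $\sqrt{x^2+a^2}$ and $x^2/\sqrt{x^2+a^2}$, the odd monomials integrate to zero, the constant term produces an $h$-independent quantity that cancels against the companion central arc, the remaining polynomial part yields a multiple of $f_0(h)=h$, and the square-root part produces exactly $f_{C}^{0}(h)=(h^2-1)\log\big((h+1)/(h-1)\big)$ in the three-zone case and $f_{C}(h)=(h^2-1)\log\big((h+1)/(1-h)\big)$ in the two-zone case.

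Next I would handle the lateral arcs $\widehat{AA_1},\widehat{BB_1}$ (right zone) and $\widehat{A_2A_3},\widehat{C_1C}$ (left zone). Here the Hamiltonians are the general indefinite quadratics of \eqref{nf:h}; because the lateral subsystems carry saddles, the discriminants $\omega_{RS}^2=a_R^2+b_Rc_R$ and $\omega_{LS}^2=a_L^2+b_Lc_L$ are positive, so the level curves are again hyperbola branches passing through the separatrix ordinates $\pm\tau_{RS}$ and $\pm\tau_{LS}$ computed in the preceding proposition. A linear change of variables diagonalizing the quadratic part reduces each lateral integral, exactly as in the central case, to a multiple of $f_0(h)$ plus a multiple of $f_{RS}(h)=(h^2-\tau_{RS}^2)\log\big((h+\tau_{RS})/(\tau_{RS}-h)\big)$ for the right arcs, and of $f_{LS}(h)$ for the left arcs. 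Summing the contributions of all arcs of each loop, and folding every elementary linear piece into a single coefficient of $f_0$, yields \eqref{mel:sss00}--\eqref{mel:sss20}, with each $k_i^{j}$ an explicit combination of the coefficients in \eqref{eq:02}--\eqref{eq:03}, of $b_L,b_R$, and of $\tau_{RS},\tau_{LS}$.

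Finally, the heteroclinic case $\tau_{LS}=\tau_{RS}$ follows from the generic computation with no new work: since $f_{LS}\equiv f_{RS}$ on $(0,1)\cup(1,\tau_{RS})$ in this regime, the separate left and right logarithmic contributions to $M_0$ merge into one multiple of $f_{RS}$, and the $f_{LS}$ term of $M_2$ is rewritten through $f_{RS}$, producing \eqref{mel:sss01}--\eqref{mel:sss21}. I expect the lateral-zone evaluation to be the main obstacle: unlike $H^{C}$, the quadratics $H^{L},H^{R}$ are not diagonal, so one must carry out the reduction to separatrix coordinates with care, track orientations and signs along each arc, and verify that no functional form beyond $f_0$ and a single logarithm survives. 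In particular one must confirm that the $\arccos$-type primitives, which are exactly what appear for center-type subsystems (cf. $f_{RC},f_{LC}$ in \eqref{eq:func}), are absent here precisely because all three subsystems are saddles.
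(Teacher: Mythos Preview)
Your approach is correct and parallels the paper's overall strategy: evaluate each line integral in Corollary \ref{col:mel} directly, show it is an affine combination of $f_0$ and a single logarithmic function, and assemble. The difference is in the parametrization of the arcs. The paper parametrizes by \emph{time} rather than by position: it writes down the explicit solution $(x_\sigma(t),y_\sigma(t))$ of each linear unperturbed subsystem (exponentials in $e^{\pm\omega_\sigma t}$ for a saddle), determines the flight time across the arc---for instance $t_{\scriptscriptstyle R}=\omega_{\scriptscriptstyle RS}^{-1}\log\big((h+\tau_{\scriptscriptstyle RS})/(\tau_{\scriptscriptstyle RS}-h)\big)$ for the right arc---and integrates the pullback of $g_\sigma\,dx-f_\sigma\,dy$ over $[0,t_\sigma]$. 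This renders the lateral-zone computation just as mechanical as the central one, with no diagonalization step: the functions $f_{\scriptscriptstyle RS},f_{\scriptscriptstyle LS}$ emerge directly because the flight time already carries the factor $\log\big((h+\tau)/(\tau-h)\big)$, and the $(h^2-\tau^2)$ prefactor arises from the quadratic terms of the integrand evaluated at the endpoints. Your position parametrization is equally valid and arguably more transparent for the central arcs, but for the non-diagonal Hamiltonians $H^{\scriptscriptstyle L},H^{\scriptscriptstyle R}$ it forces the extra reduction you correctly flag as the main obstacle; the time parametrization sidesteps that entirely at the cost of writing out the exponential solutions once.
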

\begin{proof}
	 	Firstly, let simplify the Melnikov function $ M_{0}(h)$ given by \eqref{mel:M0} for the case $\tau_{\scriptscriptstyle LS}\ne\tau_{\scriptscriptstyle RS}$. For this propose, consider the orbit $(x_{\scriptscriptstyle R}(t),y_{\scriptscriptstyle R}(t))$ of the system $\eqref{eq:01}|_{\epsilon=0}$, such that $(x_{\scriptscriptstyle R}(0),y_{\scriptscriptstyle R}(0))=(1,h)$,  given by
\begin{equation*}
	\begin{aligned}
		x_{\scriptscriptstyle R}(t)=\,&-\frac{e^{-t \omega_{\scriptscriptstyle RS}}}{2\omega_{\scriptscriptstyle RS}}\Big(b_{\scriptscriptstyle R} h-b_{\scriptscriptstyle R}e^{2t \omega_{\scriptscriptstyle RS}}h-2e^{t \omega_{\scriptscriptstyle RS}}\omega_{\scriptscriptstyle RS}+b_{\scriptscriptstyle R}\tau_{\scriptscriptstyle RS}-2b_{\scriptscriptstyle R}e^{t \omega_{\scriptscriptstyle RS}}\tau_{\scriptscriptstyle RS} +b_{\scriptscriptstyle R}e^{2t \omega_{\scriptscriptstyle RS}}\tau_{\scriptscriptstyle RS}\Big), \\
		y_{\scriptscriptstyle R}(t)=\,&-\frac{e^{-t \omega_{\scriptscriptstyle RS}}}{2\omega_{\scriptscriptstyle RS}}\Big(-a_{\scriptscriptstyle R}h+a_{\scriptscriptstyle R}e^{2t \omega_{\scriptscriptstyle RS}}h-\omega_{\scriptscriptstyle RS}h- e^{2t \omega_{\scriptscriptstyle RS}}\omega_{\scriptscriptstyle RS}h-a_{\scriptscriptstyle R}\tau_{\scriptscriptstyle RS} +2a_{\scriptscriptstyle R}e^{t \omega_{\scriptscriptstyle RS}}\tau_{\scriptscriptstyle RS}\\
		&-a_{\scriptscriptstyle R}e^{2t \omega_{\scriptscriptstyle RS}}\tau_{\scriptscriptstyle RS}-\omega_{\scriptscriptstyle RS}\tau_{\scriptscriptstyle RS}+e^{2t \omega_{\scriptscriptstyle RS}}\omega_{\scriptscriptstyle RS}\tau_{\scriptscriptstyle RS}\Big).
	\end{aligned}
\end{equation*}
The flight time of the orbit $(x_{\scriptscriptstyle R}(t),y_{\scriptscriptstyle R}(t))$, from $A(h)=(1,h)$ to $A_1(h)=(1,-h)$, is 
$$t_{\scriptscriptstyle R}=\frac{1}{\omega_{\scriptscriptstyle RS}}\log\bigg(\frac{h+\tau_{\scriptscriptstyle RS}}{\tau_{\scriptscriptstyle RS}-h}\bigg).$$
Now, for $g_{\scriptscriptstyle R}$ and $f_{\scriptscriptstyle R}$ defined in \eqref{eq:02} and \eqref{eq:03}, respectively, we have
\begin{equation}\label{sys:rsss0}
	\begin{aligned}
		I_{\scriptscriptstyle R}^0&=\int_{\widehat{AA_1}}g_{\scriptscriptstyle R}dx-f_{\scriptscriptstyle R}dy \\
		& =\int_{0}^{t_{\scriptscriptstyle R}}\Big(g_{\scriptscriptstyle R}(x_{\scriptscriptstyle R}(t), y_{\scriptscriptstyle R}(t)) x'_{\scriptscriptstyle R}(t) - 
		f_{\scriptscriptstyle R}(x_{\scriptscriptstyle R}(t), y_{\scriptscriptstyle R}(t)) y'_{\scriptscriptstyle R}(t)\Big)dt \\
		& =\int_{0}^{t_{\scriptscriptstyle R}}\bigg(\sum_{i+j=0}^{1}q_{ij}x^i_{\scriptscriptstyle R}(t)y^j_{\scriptscriptstyle R}(t)x'_{\scriptscriptstyle R}(t) - \sum_{i+j=0}^{1}p_{ij}x^i_{\scriptscriptstyle R}(t)y^j_{\scriptscriptstyle R}(t)y'_{\scriptscriptstyle R}(t)\bigg)dt\\
		& = \alpha_1f_0(h)+\alpha_2f_{\scriptscriptstyle RS}(h),		
	\end{aligned}
\end{equation}
with 
$$
\alpha_1 =\frac{1}{\omega_{\scriptscriptstyle RS}}\Big(2 (p_{00} + p_{10}) \omega_{\scriptscriptstyle RS} + b_{\scriptscriptstyle R} (p_{10} + q_{01}) \tau_{\scriptscriptstyle RS}\Big)\quad\text{and}\quad
\alpha_2  = \frac{b_{\scriptscriptstyle R}}{2\omega_{\scriptscriptstyle RS}}\Big(p_{10} + q_{01}\Big).		
$$
The orbit $(x_{\scriptscriptstyle C1}(t),y_{\scriptscriptstyle C1}(t))$ of the system $\eqref{eq:01}|_{\epsilon=0}$, such that $(x_{\scriptscriptstyle C1}(0),y_{\scriptscriptstyle C1}(0))=(1,-h)$, is given by
\begin{equation*}
	\begin{aligned}
		x_{\scriptscriptstyle C1}(t)&=\frac{e^{-t}}{2}(h+1)-\frac{e^{t}}{2}(h-1), \\
		y_{\scriptscriptstyle C1}(t)&=-\frac{e^{-t}}{2}(h+1)-\frac{e^{t}}{2}(h-1).
	\end{aligned}
\end{equation*}
The flight time of the orbit $(x_{\scriptscriptstyle C1}(t),y_{\scriptscriptstyle C1}(t))$, from $A_1(h)=(1,-h)$ to $A_2(h)=(-1,-h)$, is
$$t_{\scriptscriptstyle C1}=\log\bigg(\frac{h+1}{h-1}\bigg).$$
Now, for $g_{\scriptscriptstyle C}$ and $f_{\scriptscriptstyle C}$ defined in \eqref{eq:02} and \eqref{eq:03}, respectively, we obtain 
\begin{equation}\label{sys:c1sss0}
	\begin{aligned}
		\bar I_{\scriptscriptstyle C}^0 = & \int_{\widehat{A_1A_2}}g_{\scriptscriptstyle C}dx-f_{\scriptscriptstyle C}dy\\
		=& \int_{0}^{t_{\scriptscriptstyle C1}}\hspace{-0.1cm}\Big(g_{\scriptscriptstyle C}(x_{\scriptscriptstyle C1}(t), y_{\scriptscriptstyle C1}(t)) x'_{\scriptscriptstyle C1}(t)  - 
		f_{\scriptscriptstyle C}(x_{\scriptscriptstyle C1}(t), y_{\scriptscriptstyle C1}(t)) y'_{\scriptscriptstyle C1}(t)\Big)dt,\\
		= & \int_{0}^{t_{\scriptscriptstyle C1}}\bigg(\sum_{i+j=0}^{1}v_{ij}x^i_{\scriptscriptstyle C1}(t)y^j_{\scriptscriptstyle C1}(t)x'_{\scriptscriptstyle C1}(t) - \sum_{i+j=0}^{1}u_{ij}x^i_{\scriptscriptstyle C1}(t)y^j_{\scriptscriptstyle C1}(t)y'_{\scriptscriptstyle C1}(t)\bigg)dt\\
		=& -2v_{00}+\alpha_3f_0(h)+\alpha_4f_{\scriptscriptstyle C}^0(h),		
	\end{aligned}
\end{equation}
with
$$
\alpha_3 = v_{01}-u_{10} \quad\text{and}\quad
\alpha_4 = \frac{1}{2}\Big(u_{10} + v_{01}\Big).		
$$
The orbit $(x_{\scriptscriptstyle L}(t),y_{\scriptscriptstyle L}(t))$ of the system $\eqref{eq:01}|_{\epsilon=0}$, such that $(x_{\scriptscriptstyle L}(0),y_{\scriptscriptstyle L}(0))=(-1,-h)$, is given by
\begin{equation*}
	\begin{aligned}
		x_{\scriptscriptstyle L}(t)=\,&\frac{e^{-t \omega_{\scriptscriptstyle LS}}}{2\omega_{\scriptscriptstyle LS}}\Big(b_{\scriptscriptstyle L} h-b_{\scriptscriptstyle L}e^{2t \omega_{\scriptscriptstyle LS}}h-2e^{t \omega_{\scriptscriptstyle LS}}\omega_{\scriptscriptstyle LS}+b_{\scriptscriptstyle L}\tau_{\scriptscriptstyle LS}-2b_{\scriptscriptstyle L}e^{t \omega_{\scriptscriptstyle LS}}\tau_{\scriptscriptstyle LS} +b_{\scriptscriptstyle L}e^{2t \omega_{\scriptscriptstyle LS}}\tau_{\scriptscriptstyle LS}\Big), \\
		y_{\scriptscriptstyle L}(t)=\,&\frac{e^{-t \omega_{\scriptscriptstyle LS}}}{2\omega_{\scriptscriptstyle LS}}\Big(-a_{\scriptscriptstyle L}h+a_{\scriptscriptstyle L}e^{2t \omega_{\scriptscriptstyle LS}}h-\omega_{\scriptscriptstyle LS}h- e^{2t \omega_{\scriptscriptstyle LS}}\omega_{\scriptscriptstyle LS}h-a_{\scriptscriptstyle L}\tau_{\scriptscriptstyle LS}+2a_{\scriptscriptstyle L}e^{t \omega_{\scriptscriptstyle LS}}\tau_{\scriptscriptstyle LS}\\
		& -a_{\scriptscriptstyle L}e^{2t \omega_{\scriptscriptstyle LS}}\tau_{\scriptscriptstyle LS}-\omega_{\scriptscriptstyle LS}\tau_{\scriptscriptstyle LS}+e^{2t \omega_{\scriptscriptstyle LS}}\omega_{\scriptscriptstyle LS}\tau_{\scriptscriptstyle LS}\Big).
	\end{aligned}
\end{equation*}
The flight time of the orbit $(x_{\scriptscriptstyle L}(t),y_{\scriptscriptstyle L}(t))$, from $A_2(h)=(-1,-h)$ to $A_3(h)=(-1,h)$, is 
$$t_{\scriptscriptstyle L}=\frac{1}{\omega_{\scriptscriptstyle LS}}\log\bigg(\frac{h+\tau_{\scriptscriptstyle LS}}{\tau_{\scriptscriptstyle LS}-h}\bigg).$$
Now, for $g_{\scriptscriptstyle L}$ and $f_{\scriptscriptstyle L}$ defined in \eqref{eq:02} and \eqref{eq:03}, respectively, we have
\begin{equation}\label{sys:lsss0}
	\begin{aligned}
		I_{\scriptscriptstyle L}^0 &=\int_{\widehat{A_2A_3}}g_{\scriptscriptstyle L}dx-f_{\scriptscriptstyle L}dy \\
		& =\int_{0}^{t_{\scriptscriptstyle L}}\Big(g_{\scriptscriptstyle L}(x_{\scriptscriptstyle L}(t), y_{\scriptscriptstyle L}(t)) x'_{\scriptscriptstyle L}(t) - 
		f_{\scriptscriptstyle L}(x_{\scriptscriptstyle L}(t), y_{\scriptscriptstyle L}(t)) y'_{\scriptscriptstyle L}(t)\Big)dt \\
		& = \int_{0}^{t_{\scriptscriptstyle L}}\bigg(\sum_{i+j=0}^{1}s_{ij}x^i_{\scriptscriptstyle L}(t)y^j_{\scriptscriptstyle L}(t)x'_{\scriptscriptstyle L}(t) - \sum_{i+j=0}^{1}r_{ij}x^i_{\scriptscriptstyle L}(t)y^j_{\scriptscriptstyle L}(t)y'_{\scriptscriptstyle L}(t)\bigg)dt\\
		& = \alpha_5f_0(h)+\alpha_6f_{\scriptscriptstyle LS}(h),		
	\end{aligned}
\end{equation}
with 
$$
\alpha_5 =\frac{1}{\omega_{\scriptscriptstyle LS}}\Big(2 (r_{10}-r_{00}) \omega_{\scriptscriptstyle LS} + b_{\scriptscriptstyle L} (r_{10} + s_{01}) \tau_{\scriptscriptstyle LS}\Big)\quad\text{and}\quad
\alpha_6  = \frac{b_{\scriptscriptstyle L}}{2\omega_{\scriptscriptstyle LS}}\Big(r_{10} + s_{01}\Big).		
$$
The orbit $(x_{\scriptscriptstyle C2}(t),y_{\scriptscriptstyle C2}(t))$ of the system $\eqref{eq:01}|_{\epsilon=0}$, such that $(x_{\scriptscriptstyle C2}(0),$ $y_{\scriptscriptstyle C2}(0))=(-1,h)$, is given by
\begin{equation*}
	\begin{aligned}
		x_{\scriptscriptstyle C2}(t)&=-\frac{e^{-t}}{2}(h+1)+\frac{e^{t}}{2}(h-1), \\
		y_{\scriptscriptstyle C2}(t)&=\frac{e^{-t}}{2}(h+1)+\frac{e^{t}}{2}(h-1).
	\end{aligned}
\end{equation*}
The flight time of the orbit $(x_{\scriptscriptstyle C2}(t),y_{\scriptscriptstyle C2}(t))$, from $A_3(h)=(-1,h)$ to $A(h)=(1,h)$, is
$$t_{\scriptscriptstyle C2}=\log\bigg(\frac{h+1}{h-1}\bigg).$$
Now, for $g_{\scriptscriptstyle C}$ and $f_{\scriptscriptstyle C}$ defined in \eqref{eq:02} and \eqref{eq:03}, respectively, we obtain 
\begin{equation}\label{sys:c2sss0}
	\begin{aligned}
		I_{\scriptscriptstyle C}^0=&\int_{\widehat{A_3A}}g_{\scriptscriptstyle C}dx-f_{\scriptscriptstyle C}dy \\
		=&\int_{0}^{t_{\scriptscriptstyle C2}}\Big(g_{\scriptscriptstyle C}(x_{\scriptscriptstyle C2}(t), y_{\scriptscriptstyle C2}(t))   x'_{\scriptscriptstyle C2}(t)  - 
		f_{\scriptscriptstyle C}(x_{\scriptscriptstyle C2}(t), y_{\scriptscriptstyle C2}(t)) y'_{\scriptscriptstyle C2}(t)\Big)dt \\
		= & \int_{0}^{t_{\scriptscriptstyle C2}}\bigg(\sum_{i+j=0}^{1}v_{ij}x^i_{\scriptscriptstyle C2}(t)y^j_{\scriptscriptstyle C2}(t)x'_{\scriptscriptstyle C2}(t) - \sum_{i+j=0}^{1}u_{ij}x^i_{\scriptscriptstyle C2}(t)y^j_{\scriptscriptstyle C2}(t)y'_{\scriptscriptstyle C2}(t)\bigg)dt\\
		= & \,\,2v_{00}+\alpha_3f_0(h)+\alpha_4f_{\scriptscriptstyle C}^0(h).		
	\end{aligned}
\end{equation}
Hence, by Corollary \ref{col:mel}, the first order Melnivov function $M_0(h)$ associated with system \eqref{eq:01} is given by
\begin{equation}\label{eq:mel0sss}
	M_0(h) = b_{\scriptscriptstyle R} I^{0}_{\scriptscriptstyle C}+\frac{b_{\scriptscriptstyle R}}{b_{\scriptscriptstyle L}}I^{0}_{\scriptscriptstyle L}+b_{\scriptscriptstyle R} \bar I^{0}_{\scriptscriptstyle C}+I^{0}_{\scriptscriptstyle R}. 
\end{equation}
Replacing \eqref{sys:rsss0}, \eqref{sys:c1sss0}, \eqref{sys:lsss0} and \eqref{sys:c2sss0} in \eqref{eq:mel0sss} we obtain \eqref{mel:sss00}, with
$$k_0^0=\alpha_1 + 2 b_{\scriptscriptstyle R}\alpha_3 + \frac{b_{\scriptscriptstyle R}}{b_{\scriptscriptstyle L}} \alpha_5,\quad k_0^1=2 b_{\scriptscriptstyle R} \alpha_4,\quad k_0^2=\alpha_2\quad\text{and}\quad k_0^3=\frac{b_{\scriptscriptstyle R}}{b_{\scriptscriptstyle L}}\alpha_6.$$
Now, let simplify the Melnikov function $ M_{1}(h)$ given by \eqref{mel:M1} for the case  $\tau_{\scriptscriptstyle LS}\ne\tau_{\scriptscriptstyle RS}$. Similarly as in \eqref{sys:rsss0}, we have that
\begin{equation}\label{sys:rsss1}
	\begin{aligned}
		I_{\scriptscriptstyle R}^1&=\int_{\widehat{BB_1}}g_{\scriptscriptstyle R}dx-f_{\scriptscriptstyle R}dy \\
		& =\int_{0}^{t_{\scriptscriptstyle R}}\Big(g_{\scriptscriptstyle R}(x_{\scriptscriptstyle R}(t), y_{\scriptscriptstyle R}(t)) x'_{\scriptscriptstyle R}(t) - 
		f_{\scriptscriptstyle R}(x_{\scriptscriptstyle R}(t), y_{\scriptscriptstyle R}(t)) y'_{\scriptscriptstyle R}(t)\Big)dt \\
		& =\int_{0}^{t_{\scriptscriptstyle R}}\bigg(\sum_{i+j=0}^{1}q_{ij}x^i_{\scriptscriptstyle R}(t)y^j_{\scriptscriptstyle R}(t)x'_{\scriptscriptstyle R}(t) - \sum_{i+j=0}^{1}p_{ij}x^i_{\scriptscriptstyle R}(t)y^j_{\scriptscriptstyle R}(t)y'_{\scriptscriptstyle R}(t)\bigg)dt\\
		& = \alpha_1f_0(h)+\alpha_2f_{\scriptscriptstyle RS}(h).	
	\end{aligned}
\end{equation}
The orbit $(x_{\scriptscriptstyle C}(t),y_{\scriptscriptstyle C}(t))$ of the system $\eqref{eq:01}|_{\epsilon=0}$, such that $(x_{\scriptscriptstyle C}(0),y_{\scriptscriptstyle C}(0))=(1,-h)$, is given by
\begin{equation*}
	\begin{aligned}
		x_{\scriptscriptstyle C}(t)&=\frac{e^{-t}}{2}(h+1)-\frac{e^{t}}{2}(h-1), \\
		y_{\scriptscriptstyle C}(t)&=-\frac{e^{-t}}{2}(h+1)-\frac{e^{t}}{2}(h-1).
	\end{aligned}
\end{equation*}
The flight time of the orbit $(x_{\scriptscriptstyle C}(t),y_{\scriptscriptstyle C}(t))$, from $B_1(h)=(1,-h)$ to $B(h)=(1,h)$, is
$$t_{\scriptscriptstyle C}=\log\bigg(\frac{h+1}{1-h}\bigg).$$
Now, for $g_{\scriptscriptstyle C}$ and $f_{\scriptscriptstyle C}$ defined in \eqref{eq:02} and \eqref{eq:03}, respectively, we obtain 
\begin{equation}\label{sys:c1sss1}
	\begin{aligned}
		\bar I_{\scriptscriptstyle C}^1 = & \int_{\widehat{B_1B}}g_{\scriptscriptstyle C}dx-f_{\scriptscriptstyle C}dy\\
		=& \int_{0}^{t_{\scriptscriptstyle C}}\hspace{-0.1cm}(g_{\scriptscriptstyle C}(x_{\scriptscriptstyle C}(t), y_{\scriptscriptstyle C}(t)) x'_{\scriptscriptstyle C}(t)  - 
		f_{\scriptscriptstyle C}(x_{\scriptscriptstyle C}(t), y_{\scriptscriptstyle C}(t)) y'_{\scriptscriptstyle C}(t))dt,\\
		= & \int_{0}^{t_{\scriptscriptstyle C}}\bigg(\sum_{i+j=0}^{1}v_{ij}x^i_{\scriptscriptstyle C}(t)y^j_{\scriptscriptstyle C}(t)x'_{\scriptscriptstyle C}(t) - \sum_{i+j=0}^{1}u_{ij}x^i_{\scriptscriptstyle C}(t)y^j_{\scriptscriptstyle C}(t)y'_{\scriptscriptstyle C}(t)\bigg)dt\\
		=& \,\,\alpha_7f_0(h)+\alpha_4f_{\scriptscriptstyle C}(h),		
	\end{aligned}
\end{equation}
with 
$$\alpha_7=-2 u_{00} - u_{10} + v_{01}.$$ 
Hence, by Corollary \ref{col:mel}, the first order Melnivov function $M_1(h)$ associated with system \eqref{eq:01} is given by
\begin{equation}\label{eq:mel1sss}
	M_1(h) = b_{\scriptscriptstyle R} I^{1}_{\scriptscriptstyle C}+I^{1}_{\scriptscriptstyle R}. 
\end{equation}
Replacing \eqref{sys:rsss1} and \eqref{sys:c1sss1} in \eqref{eq:mel1sss} we obtain \eqref{mel:sss10}, with
$$k_1^0=\alpha_1 +b_{\scriptscriptstyle R}\alpha_7,\quad k_1^1= b_{\scriptscriptstyle R} \alpha_4,\quad\text{and}\quad k_1^2=\alpha_2.$$
Now, let simplify the Melnikov function $ M_{2}(h)$ given by \eqref{mel:M2} for the case  $\tau_{\scriptscriptstyle LS}\ne\tau_{\scriptscriptstyle RS}$. Similarly as in \eqref{sys:lsss0}, we have that
\begin{equation}\label{sys:lsss22}
	\begin{aligned}
		I_{\scriptscriptstyle L}^2 &=\int_{\widehat{C_1C}}g_{\scriptscriptstyle L}dx-f_{\scriptscriptstyle L}dy \\
		& =\int_{0}^{t_{\scriptscriptstyle L}}\Big(g_{\scriptscriptstyle L}(x_{\scriptscriptstyle L}(t), y_{\scriptscriptstyle L}(t)) x'_{\scriptscriptstyle L}(t) - 
		f_{\scriptscriptstyle L}(x_{\scriptscriptstyle L}(t), y_{\scriptscriptstyle L}(t)) y'_{\scriptscriptstyle L}(t)\Big)dt \\
		& = \int_{0}^{t_{\scriptscriptstyle L}}\bigg(\sum_{i+j=0}^{1}s_{ij}x^i_{\scriptscriptstyle L}(t)y^j_{\scriptscriptstyle L}(t)x'_{\scriptscriptstyle L}(t) - \sum_{i+j=0}^{1}r_{ij}x^i_{\scriptscriptstyle L}(t)y^j_{\scriptscriptstyle L}(t)y'_{\scriptscriptstyle L}(t)\bigg)dt\\
		& = \alpha_5f_0(h)+\alpha_6f_{\scriptscriptstyle LS}(h),		
	\end{aligned}
\end{equation}
The orbit $(x_{\scriptscriptstyle C}(t),y_{\scriptscriptstyle C}(t))$ of the system $\eqref{eq:01}|_{\epsilon=0}$, such that $(x_{\scriptscriptstyle C}(0),y_{\scriptscriptstyle C}(0))=(-1,h)$, is given by
\begin{equation*}
	\begin{aligned}
		x_{\scriptscriptstyle C}(t)&=-\frac{e^{-t}}{2}(h+1)+\frac{e^{t}}{2}(h-1), \\
		y_{\scriptscriptstyle C}(t)&=\frac{e^{-t}}{2}(h+1)+\frac{e^{t}}{2}(h-1).
	\end{aligned}
\end{equation*}
The flight time of the orbit $(x_{\scriptscriptstyle C}(t),y_{\scriptscriptstyle C}(t))$, from $C(h)=(-1,h)$ to $C_1(h)=(-1,-h)$, is
$$t_{\scriptscriptstyle C}=\log\bigg(\frac{h+1}{1-h}\bigg).$$
Now, for $g_{\scriptscriptstyle C}$ and $f_{\scriptscriptstyle C}$ defined in \eqref{eq:02} and \eqref{eq:03}, respectively, we obtain 
\begin{equation}\label{sys:c2sss2}
	\begin{aligned}
		\bar I_{\scriptscriptstyle C}^2 = & \int_{\widehat{CC1}}g_{\scriptscriptstyle C}dx-f_{\scriptscriptstyle C}dy\\
		=& \int_{0}^{t_{\scriptscriptstyle C}}\hspace{-0.1cm}\Big(g_{\scriptscriptstyle C}(x_{\scriptscriptstyle C}(t), y_{\scriptscriptstyle C}(t)) x'_{\scriptscriptstyle C}(t)  - 
		f_{\scriptscriptstyle C}(x_{\scriptscriptstyle C}(t), y_{\scriptscriptstyle C}(t)) y'_{\scriptscriptstyle C}(t)\Big)dt,\\
		= & \int_{0}^{t_{\scriptscriptstyle C}}\bigg(\sum_{i+j=0}^{1}v_{ij}x^i_{\scriptscriptstyle C}(t)y^j_{\scriptscriptstyle C}(t)x'_{\scriptscriptstyle C}(t) - \sum_{i+j=0}^{1}u_{ij}x^i_{\scriptscriptstyle C}(t)y^j_{\scriptscriptstyle C}(t)y'_{\scriptscriptstyle C}(t)\bigg)dt\\
		=& \,\,\alpha_8f_0(h)+\alpha_4f_{\scriptscriptstyle C}(h),		
	\end{aligned}
\end{equation}
with 
$$\alpha_8=2 u_{00} - u_{10} + v_{01}.$$ 
Hence, by Corollary \ref{col:mel}, the first order Melnivov function $M_2(h)$ associated with system \eqref{eq:01} is given by
\begin{equation}\label{eq:mel2sss}
	M_2(h) = \frac{1}{b_{\scriptscriptstyle L}} I^{2}_{\scriptscriptstyle L}+I^{2}_{\scriptscriptstyle C}. 
\end{equation}
Replacing \eqref{sys:lsss22} and \eqref{sys:c2sss2} in \eqref{eq:mel2sss} we obtain \eqref{mel:sss20}, with
$$k_2^0=\frac{1}{b_{\scriptscriptstyle L}}\alpha_5 + \alpha_8,\quad k_2^1= \alpha_4,\quad\text{and}\quad k_2^2=\frac{1}{b_{\scriptscriptstyle L}}\alpha_6.$$
Finally, replacing $\tau_{\scriptscriptstyle LS}=\tau_{\scriptscriptstyle RS}$ in \eqref{mel:sss00}, \eqref{mel:sss10} and \eqref{mel:sss20} we obtain the expression \eqref{mel:sss01}, \eqref{mel:sss11} and \eqref{mel:sss21}, with
$$\tilde{k}_0^0=\alpha_1 + 2 b_{\scriptscriptstyle R}\alpha_3 + \frac{b_{\scriptscriptstyle R}}{b_{\scriptscriptstyle L}\omega_{\scriptscriptstyle LS}} \Big(2 (r_{10}-r_{00}) \omega_{\scriptscriptstyle LS} + b_{\scriptscriptstyle L}  (r_{10} + s_{01}) \tau_{\scriptscriptstyle RS}\Big),\,\,\, \tilde{k}_0^1=2 b_{\scriptscriptstyle R} \alpha_4,$$
$$\tilde{k}_0^2=\alpha_2+\frac{b_{\scriptscriptstyle R}}{b_{\scriptscriptstyle L}}\alpha_6,\quad\text{and}\quad\tilde{k}_i^j=k_i^j|_{\tau_{\scriptscriptstyle LS}=\tau_{\scriptscriptstyle RS}}, \quad\text{for}\,\,i=1,2,\,\, j=0,1,2.$$
\end{proof}

\bigskip

The following two theorems provide expressions for the first order Melnikov functions associated with system \eqref{eq:01} when the system $\eqref{eq:01}|_{\epsilon=0}$ is of the types CSS and CSC. The proof of these results is similarity to proof of Theorem \ref{theo:sss}, and will be omitted to simplify the text, but we emphasize the follow remark.

\begin{remark}
	In the cases CSS and CSC, the flight time of the orbits $(x_{\scriptscriptstyle R}(t),y_{\scriptscriptstyle R}(t))$ and $(x_{\scriptscriptstyle L}(t),y_{\scriptscriptstyle L}(t))$ of the system $\eqref{eq:01}|_{\epsilon=0}$ such that $(x_{\scriptscriptstyle R}(0),y_{\scriptscriptstyle L}(0))=(1,h)$ and $(x_{\scriptscriptstyle L}(0),y_{\scriptscriptstyle L}(0))=(-1,-h)$, are given by
	\begin{equation*}
		\begin{aligned}
			t_{\scriptscriptstyle R} =&\, \frac{1}{\omega_{\scriptscriptstyle RC}}\bigg(2\pi \mu_1+(-1)^{\mu_1}\arccos\bigg(\frac{\tau_{\scriptscriptstyle RC}^2-h^2}{\tau_{\scriptscriptstyle RC}^2+h^2}\bigg)\bigg),\\
			t_{\scriptscriptstyle L} =&\, \frac{1}{\omega_{\scriptscriptstyle LC}}\bigg(2\pi \mu_2+(-1)^{\mu_2}\arccos\bigg(\frac{\tau_{\scriptscriptstyle LC}^2-h^2}{\tau_{\scriptscriptstyle LC}^2+h^2}\bigg)\bigg), 
		\end{aligned}
	\end{equation*}	
	respectively, with $\mu_i=0$ (resp. $\mu_i=1$), $i=1,2$, if the right or left subsystem from $\eqref{eq:01}|_{\epsilon=0}$ has a virtual (resp. real) center.
\end{remark}

\begin{theorem}\label{theo:css}
	Suppose that system $\eqref{eq:01}|_{\epsilon=0}$ is of the type CSS. Then the first order Melnikov functions given by  \eqref{mel:M0}, \eqref{mel:M1} and \eqref{mel:M2} associated with system \eqref{eq:01} can be expressed as	
	\begin{eqnarray*}
		&& M_0(h)=k_0^0f_0(h)+k_0^1f_{\scriptscriptstyle C}^{0}(h)+k_0^2f_{\scriptscriptstyle RS}(h)+k_0^3f_{\scriptscriptstyle LC}(h),\quad\quad h\in(1,\tau_{\scriptscriptstyle RS}),\label{mel:css00}\\
		&& M_1(h)=k_1^0f_0(h)+k_1^1f_{\scriptscriptstyle C}(h)+k_1^2f_{\scriptscriptstyle RS}(h),\quad\quad\quad\quad\quad\quad\quad h\in(0,1),\label{mel:css01}\\
		&& M_2(h)=k_2^0f_0(h)+k_2^1f_{\scriptscriptstyle C}(h)+k_2^2f_{\scriptscriptstyle LC}(h),\quad\quad\quad\quad\quad\quad\quad h\in(0,1),\label{mel:css02}
	\end{eqnarray*}
 where the functions $f_0(h)$, $f_{\scriptscriptstyle C}^0(h)$, $f_{\scriptscriptstyle C}(h)$, $f_{\scriptscriptstyle RS}(h)$ and $f_{\scriptscriptstyle LC}(h)$ are the ones defined in \eqref{eq:func}, with $\mu_2=0$ (resp. $\mu_2=1$) if the left subsystem from $\eqref{eq:01}|_{\epsilon=0}$ has a virtual (resp. real) center. Here the coefficients $k_i^j$, for $i=0,1,2$ and $j=0,1,2,3$, depend on the parameters of system \eqref{eq:01}.
\end{theorem}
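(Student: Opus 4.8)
The plan is to follow the computational scheme of the proof of Theorem \ref{theo:sss} verbatim for the right and central subsystems, and to replace only the left-subsystem computation, which in the CSS configuration involves a center rather than a saddle. Recall that by the labelling convention (Left, Central, Right) the CSS case has a center in the left subsystem while the central and right subsystems remain saddles; hence all the right- and central-zone pieces of the Melnikov functions are unaffected.

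Concretely, I would first observe that the parametrizations $(x_{\scriptscriptstyle R}(t), y_{\scriptscriptstyle R}(t))$, $(x_{\scriptscriptstyle C1}(t), y_{\scriptscriptstyle C1}(t))$ and $(x_{\scriptscriptstyle C2}(t), y_{\scriptscriptstyle C2}(t))$, their flight times $t_{\scriptscriptstyle R}$, $t_{\scriptscriptstyle C1}$, $t_{\scriptscriptstyle C2}$, and the resulting integrals $I_{\scriptscriptstyle R}^0$, $\bar I_{\scriptscriptstyle C}^0$, $I_{\scriptscriptstyle C}^0$, $I_{\scriptscriptstyle R}^1$, $\bar I_{\scriptscriptstyle C}^1$ and $\bar I_{\scriptscriptstyle C}^2$ computed in \eqref{sys:rsss0}, \eqref{sys:c1sss0}, \eqref{sys:c2sss0}, \eqref{sys:rsss1}, \eqref{sys:c1sss1} and \eqref{sys:c2sss2} remain valid word for word, because the corresponding subsystems are still saddles. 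These contribute exactly the $f_0$, $f_{\scriptscriptstyle C}^0$, $f_{\scriptscriptstyle C}$ and $f_{\scriptscriptstyle RS}$ terms, with the same coefficients $\alpha_1,\dots,\alpha_4,\alpha_7,\alpha_8$ as before.

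The only genuinely new computation is the left-subsystem integral. When the left subsystem has a center one has $a_{\scriptscriptstyle L}^2 + b_{\scriptscriptstyle L}c_{\scriptscriptstyle L} < 0$, so the left flow is oscillatory; explicitly I would write the orbit $(x_{\scriptscriptstyle L}(t), y_{\scriptscriptstyle L}(t))$ with $(x_{\scriptscriptstyle L}(0), y_{\scriptscriptstyle L}(0)) = (-1,-h)$ in terms of $\cos(\omega_{\scriptscriptstyle LC}\, t)$ and $\sin(\omega_{\scriptscriptstyle LC}\, t)$, where $\omega_{\scriptscriptstyle LC} = \sqrt{-a_{\scriptscriptstyle L}^2 - b_{\scriptscriptstyle L}c_{\scriptscriptstyle L}}$, and use the flight time $t_{\scriptscriptstyle L}$ recorded in the Remark preceding the statement. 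Substituting into $I_{\scriptscriptstyle L}^0 = \int_{\widehat{A_2A_3}} g_{\scriptscriptstyle L}\, dx - f_{\scriptscriptstyle L}\, dy = \int_0^{t_{\scriptscriptstyle L}}(g_{\scriptscriptstyle L}\, x_{\scriptscriptstyle L}' - f_{\scriptscriptstyle L}\, y_{\scriptscriptstyle L}')\, dt$ and integrating the resulting trigonometric polynomial, I expect the oscillatory contributions to recombine, through the arccos identity embedded in $t_{\scriptscriptstyle L}$, into the single transcendental block $f_{\scriptscriptstyle LC}(h)$ of \eqref{eq:func}, leaving $I_{\scriptscriptstyle L}^0 = \beta_5 f_0(h) + \beta_6 f_{\scriptscriptstyle LC}(h)$ for coefficients $\beta_5,\beta_6$ linear in the left perturbation parameters $r_{ij}, s_{ij}$. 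The identical computation with initial condition $(-1,h)$ yields $I_{\scriptscriptstyle L}^2 = \beta_5 f_0(h) + \beta_6 f_{\scriptscriptstyle LC}(h)$ on the corresponding two-zone annulus.

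Finally I would assemble the three Melnikov functions through Corollary \ref{col:mel}: inserting the right and central blocks together with the new left block into \eqref{mel:M0}, \eqref{mel:M1} and \eqref{mel:M2} reproduces exactly the claimed forms, with $k_0^3 = (b_{\scriptscriptstyle R}/b_{\scriptscriptstyle L})\beta_6$, $k_2^2 = (1/b_{\scriptscriptstyle L})\beta_6$, and the remaining $k_i^j$ reading off as the same combinations displayed in the proof of Theorem \ref{theo:sss} (in particular the $\pm 2v_{00}$ terms of $I_{\scriptscriptstyle C}^0$ and $\bar I_{\scriptscriptstyle C}^0$ again cancel in $M_0$). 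The main obstacle is precisely the left-subsystem integral: whereas the saddle case produced a clean logarithmic primitive, the center case requires checking that the trigonometric integral, evaluated over the arccos-determined flight time, collapses to exactly one transcendental function $f_{\scriptscriptstyle LC}$ plus a multiple of $f_0$, with no surviving higher harmonics. It is here that the branch index $\mu_2 \in \{0,1\}$ distinguishing a virtual from a real center enters, and it must be tracked carefully so that the correct determination of the arccos is used throughout.
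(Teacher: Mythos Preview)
Your proposal is correct and follows exactly the approach the paper itself indicates: the authors omit the proof, stating only that it is ``similar to the proof of Theorem~\ref{theo:sss}'' together with the Remark recording the center-type flight time $t_{\scriptscriptstyle L}$, which is precisely the scheme you outline. The one minor slip is that the initial condition for $I_{\scriptscriptstyle L}^2$ is $(-1,-h)=C_1(h)$ rather than $(-1,h)$, but this does not affect the argument.
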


\medskip

\begin{theorem}\label{theo:csc}
	Suppose that system $\eqref{eq:01}|_{\epsilon=0}$ is of the type CSC. Then the first order Melnikov functions given by  \eqref{mel:M0}, \eqref{mel:M1} and \eqref{mel:M2} associated with system \eqref{eq:01} can be expressed as	
	\begin{eqnarray*}
		&& M_0(h)=k_0^0f_0(h)+k_0^1f_{\scriptscriptstyle C}^{0}(h)+k_0^2f_{\scriptscriptstyle RC}(h)+k_0^3f_{\scriptscriptstyle LC}(h),\quad\quad h\in(1,\infty),\label{mel:csc00}\\
		&& M_1(h)=k_1^0f_0(h)+k_1^1f_{\scriptscriptstyle C}(h)+k_1^2f_{\scriptscriptstyle RC}(h),\quad\quad\quad\quad\quad\quad\quad h\in(0,1),\label{mel:csc01}\\
		&& M_2(h)=k_2^0f_0(h)+k_2^1f_{\scriptscriptstyle C}(h)+k_2^2f_{\scriptscriptstyle LC}(h),\quad\quad\quad\quad\quad\quad\quad h\in(0,1)\label{mel:csc02},
	\end{eqnarray*}
	where the functions $f_0(h)$, $f_{\scriptscriptstyle C}^0(h)$, $f_{\scriptscriptstyle C}(h)$, $f_{\scriptscriptstyle RC}(h)$ and $f_{\scriptscriptstyle LC}(h)$ are the ones defined in \eqref{eq:func}, with $\mu_i=0$ (resp. $\mu_i=1$), $i=1,2$, if the right or left subsystem from $\eqref{eq:01}|_{\epsilon=0}$ has a virtual (resp. real) center. Here the coefficients $k_i^j$, for $i=0,1,2$ and $j=0,1,2,3$, depend on the parameters of system \eqref{eq:01}.
\end{theorem}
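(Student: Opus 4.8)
The plan is to mirror, step for step, the proof of Theorem~\ref{theo:sss}, changing only the contributions of the lateral subsystems: in the CSC case the left and right subsystems carry centers instead of saddles, while the central (saddle) subsystem is untouched.

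First I would check that Corollary~\ref{col:mel} applies verbatim. By item~(b) of the preceding Proposition, for $h\in J_0=(1,\infty)$ the orbit $L^0_h$ still crosses the switching lines at $A=(1,h)$, $A_1=(1,-h)$, $A_2=(-1,-h)$, $A_3=(-1,h)$, and for $h\in J_1=J_2=(0,1)$ the orbits $L^1_h$, $L^2_h$ cross at $B=(1,h)$, $B_1=(1,-h)$ and $C=(-1,h)$, $C_1=(-1,-h)$. Since the ratios of the $H_y^{\scriptscriptstyle \bullet}$ appearing in Corollary~\ref{col:mel} were computed solely from the values of the Hamiltonians at these crossing points, and these values are unchanged, the representations \eqref{mel:M0}, \eqref{mel:M1}, \eqref{mel:M2} of $M_0$, $M_1$, $M_2$ hold with the same prefactors $b_{\scriptscriptstyle R}$, $b_{\scriptscriptstyle R}/b_{\scriptscriptstyle L}$, $1$, $1/b_{\scriptscriptstyle L}$. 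It then remains only to evaluate the eight line integrals $I_{\scriptscriptstyle C}^0$, $\bar I_{\scriptscriptstyle C}^0$, $I_{\scriptscriptstyle R}^0$, $I_{\scriptscriptstyle L}^0$, $\bar I_{\scriptscriptstyle C}^1$, $I_{\scriptscriptstyle R}^1$, $\bar I_{\scriptscriptstyle C}^2$, $I_{\scriptscriptstyle L}^2$.

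Second, the central integrals $I_{\scriptscriptstyle C}^0$, $\bar I_{\scriptscriptstyle C}^0$, $\bar I_{\scriptscriptstyle C}^1$ and $\bar I_{\scriptscriptstyle C}^2$ are literally those of Theorem~\ref{theo:sss}: the central Hamiltonian $H^{\scriptscriptstyle C}$, the hyperbolic arcs $(x_{\scriptscriptstyle C}(t),y_{\scriptscriptstyle C}(t))$, and the flight times $\log((h+1)/(h-1))$ and $\log((h+1)/(1-h))$ all coincide, so these reduce to the combinations $\pm 2v_{00}+\alpha_3 f_0(h)+\alpha_4 f_{\scriptscriptstyle C}^0(h)$ in three zones and $\alpha_7 f_0(h)+\alpha_4 f_{\scriptscriptstyle C}(h)$, $\alpha_8 f_0(h)+\alpha_4 f_{\scriptscriptstyle C}(h)$ in two zones, the constants $\pm 2v_{00}$ cancelling in $M_0$ exactly as before. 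Third, for the lateral integrals $I_{\scriptscriptstyle R}^0$, $I_{\scriptscriptstyle R}^1$ and $I_{\scriptscriptstyle L}^0$, $I_{\scriptscriptstyle L}^2$ I would substitute the explicit center solutions $(x_{\scriptscriptstyle R}(t),y_{\scriptscriptstyle R}(t))$, $(x_{\scriptscriptstyle L}(t),y_{\scriptscriptstyle L}(t))$ starting at $(1,h)$ and $(-1,-h)$, whose flight times $t_{\scriptscriptstyle R}$, $t_{\scriptscriptstyle L}$ are the ones recorded in the Remark preceding Theorem~\ref{theo:css} in terms of $\omega_{\scriptscriptstyle RC}$, $\omega_{\scriptscriptstyle LC}$, $\tau_{\scriptscriptstyle RC}$, $\tau_{\scriptscriptstyle LC}$ and the indices $\mu_1,\mu_2\in\{0,1\}$. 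Carrying out $\int g\,dx-f\,dy$ along these trigonometric arcs and inserting the flight time, each integral collapses into $\gamma_1 f_0(h)+\gamma_2 f_{\scriptscriptstyle RC}(h)$ (right) and $\gamma_3 f_0(h)+\gamma_4 f_{\scriptscriptstyle LC}(h)$ (left), because the functions $f_{\scriptscriptstyle RC}$, $f_{\scriptscriptstyle LC}$ of \eqref{eq:func} already carry the factor $2\pi\mu_i+(-1)^{\mu_i}\arccos(\cdot)$; a single closed form thus covers both the real ($\mu_i=1$) and virtual ($\mu_i=0$) center cases. Substituting the eight integrals into \eqref{mel:M0}, \eqref{mel:M1}, \eqref{mel:M2} and collecting terms yields the stated expressions and reads off the coefficients $k_i^j$, with $M_0$ defined on $J_0=(1,\infty)$.

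The main obstacle is this third step: one must integrate the perturbation one-form along a center arc that may wind through more than half a turn, and verify that the antiderivative, evaluated at $t_{\scriptscriptstyle R}$ (resp. $t_{\scriptscriptstyle L}$), reassembles exactly into $(h^2+\tau_{\scriptscriptstyle RC}^2)(2\pi\mu_1+(-1)^{\mu_1}\arccos((\tau_{\scriptscriptstyle RC}^2-h^2)/(h^2+\tau_{\scriptscriptstyle RC}^2)))$ uniformly in $\mu_1\in\{0,1\}$. This forces one to track the correct branch of $\arccos$ and to reduce $\cos(\omega_{\scriptscriptstyle RC}t_{\scriptscriptstyle R})$, $\sin(\omega_{\scriptscriptstyle RC}t_{\scriptscriptstyle R})$ to rational functions of $h$ and $\tau_{\scriptscriptstyle RC}$; once this identity is settled, the remaining bookkeeping is routine and parallels the saddle computation of Theorem~\ref{theo:sss}.
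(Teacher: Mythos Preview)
Your proposal is correct and matches the paper's approach exactly: the paper omits the proof entirely, stating only that it is similar to that of Theorem~\ref{theo:sss} and pointing to the Remark giving the flight times $t_{\scriptscriptstyle R}$, $t_{\scriptscriptstyle L}$ for the center subsystems, which is precisely the substitution you outline. Your identification of the branch-tracking for $\arccos$ in the real versus virtual center cases as the only nontrivial step is apt, and the paper leaves this implicit.
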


\bigskip


Before proving the Theorem \ref{the:01}, we will need the following results.

\medskip

Consider the functions $F,G,H:\mathbb{R}\rightarrow\mathbb{R}$ given by 
\begin{eqnarray}
	&&F(h)=\sum_{j=0}^{2}C_j^{F}(\delta) (h-\tau)^j+\mathcal{O}((h-\tau)^{3}),\label{func:F}\\
	&&G(h)=\sum_{j=0}^{2}C_j^{G}(\delta) (h-\tau)^j+\mathcal{O}((h-\tau)^{3}),\label{func:G}\\
	&&H(h)=\sum_{j=0}^{2}C_j^{H}(\delta) (h-\tau)^j+\mathcal{O}((h-\tau)^{3}),\label{func:H}
\end{eqnarray}
such that $C_j^{F}=C_j^{G}+C_j^{H}$,  $\tau\geq0$ and the coefficients  $C_j^{i}(\delta)$, $i=F,G,H$, $j=0,\dots,n$, depending on the parameters $\delta=(\delta_1,\dots,\delta_m)\in\mathbb{R}^m$. For each fixed $\delta$, denote by $N_F(\delta)$, $N_G(\delta)$ and $N_H(\delta)$ the number of zeros of the function $F$, $G$ and $H$, respectively. Then we have the follow proposition.

\begin{proposition}\label{the:coef}  Suppose that there exist   $\tilde{\delta}\in\mathbb{R}^{m}$, with $m\geq 5$, such that 
	\begin{equation*}\label{cond:1}
	C_{j}^G(\tilde{\delta})=C_{j}^{H}(\tilde{\delta})=0,\quad j=0,1,2,
	\end{equation*}
	\begin{equation}\label{cond:rank}
		\textnormal{rank}\,\frac{\partial(C_{0}^G,C_{1}^G,C_{2}^G,C_{0}^H, C_{1}^H)}{\partial(\delta_1,\dots,\delta_{5},\delta_{6},\dots,\delta_m)}\tilde{(\delta)}=5,
	\end{equation}
and
\begin{equation}\label{cond:coef}
 C_2^{H}=\alpha_{0}^G C_{0}^G+\alpha_{1}^G C_{1}^G+\alpha_{2}^G C_{2}^G+\alpha_{0}^H C_{0}^H+\alpha_{1}^H C_{1}^H,
\end{equation}
where $\alpha_i^k$, $i=0,1$ and $k=G,H$ are constants.
	If $\alpha_{2}^G<0$ and $\alpha_{2}^G\ne-1$ (resp. $\alpha_{2}^G\geq0$ or $\alpha_{2}^G=-1$), then the $\max \{N_F(\delta)+N_G(\delta)+N_H(\delta)\}$ is at least six (resp. five) in a neighborhood of $h=\tau$ for all $\delta$ near $\tilde{\delta}$. Moreover, the following configurations of zeros $(N_F,N_G,N_H)$, for $N_F,N_G,N_H\in\{0,1,2\}$, are achievable. In particular, when $\alpha_{2}^G<0$ and $\alpha_{2}^G\ne-1$ (resp. $\alpha_{2}^G\geq0$ or $\alpha_{2}^G=-1$), we have $(2,2,2)$ (resp. $(2,2,1)$ or $(1,2,2)$).
		
\end{proposition}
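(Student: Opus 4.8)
The plan is to localize the analysis at $h=\tau$ and to exploit the one-sided nature of the problem inherited from the Melnikov setting: the zeros of $F$ (playing the role of the three-zone function $M_0$, whose domain lies in $h>\tau$) are counted in a right neighborhood $(\tau,\tau+\rho)$, whereas those of $G$ and $H$ (the two-zone functions $M_1,M_2$, defined for $h<\tau$) are counted in a left neighborhood $(\tau-\rho,\tau)$. This opposite-sidedness of $F$ versus $\{G,H\}$ is precisely what makes the sign of $\alpha_2^G$ decisive. Writing $u=h-\tau$, each function is a perturbed quadratic $C_2u^2+C_1u+C_0+\mathcal{O}(u^3)$; when $C_2\neq0$ its second derivative is bounded away from zero near $u=0$, so it has at most two zeros near $\tau$, whence $N_F+N_G+N_H\le 6$ and it suffices to \emph{construct} the asserted configurations. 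By \eqref{cond:rank} the map $\delta\mapsto(C_0^G,C_1^G,C_2^G,C_0^H,C_1^H)$ is a submersion at $\tilde\delta$, so these five coefficients may be prescribed as arbitrary small independent quantities; $C_2^H$ is then slaved through \eqref{cond:coef}, and $C_j^F=C_j^G+C_j^H$.

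Next I would record the elementary zero-placement mechanism: after the rescaling $u=\eta w$ with $\eta\ll\sigma:=C_2^G$, so that the quadratic part dominates the remainder, a function $C_2u^2+C_1u+C_0+\mathcal{O}(u^3)$ realizes, up to a correction controlled by the implicit function theorem, exactly the zeros of its quadratic part. Prescribing two simple zeros of $G$ at $u=-a_1,-a_2$ ($a_i>0$) forces $(C_0^G,C_1^G,C_2^G)=(\sigma a_1a_2,\ \sigma(a_1+a_2),\ \sigma)$, a common-sign pattern, and similarly for $H$ with roots $-b_1,-b_2$; two simple zeros of $F$ at $u=r_1,r_2>0$ force the pattern $C_1^F=-C_2^F(r_1+r_2)$, $C_0^F=C_2^Fr_1r_2$. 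Imposing $F=G+H$ together with the slaving, and noting that the contributions of $C_0^G,C_1^G,C_0^H,C_1^H$ to $C_2^H$ are of higher order, one has to leading order $C_2^H=\alpha_2^G\sigma$ and $C_2^F=(1+\alpha_2^G)\sigma$; matching the linear and constant coefficients then yields the two balance relations
\[
(a_1+a_2)+\alpha_2^G(b_1+b_2)+(1+\alpha_2^G)(r_1+r_2)=0,
\]
\[
a_1a_2+\alpha_2^G\,b_1b_2=(1+\alpha_2^G)\,r_1r_2,
\]
to be solved with $a_i,b_i,r_i$ small, positive and pairwise distinct. This pair of relations is the heart of the matter.

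I would then conclude by cases. If $\alpha_2^G<0$ and $\alpha_2^G\neq-1$, then $C_2^F\neq0$ so $F$ is genuinely quadratic; the coefficient $\alpha_2^G$ in the first relation is negative, so one may solve for $b_1+b_2$ (when $-1<\alpha_2^G<0$) or for $a_1+a_2$ (when $\alpha_2^G<-1$) keeping all quantities positive, while the second relation and the distinctness inequalities leave ample room, producing $(N_F,N_G,N_H)=(2,2,2)$. If $\alpha_2^G\ge0$, every coefficient in the first relation is nonnegative and that of $(a_1+a_2)$ is $1$, so no positive solution exists; I would then relinquish one left zero of $H$, using the still-free $C_0^H,C_1^H$ to give $F$ two right zeros and $G$ two left zeros and choosing $C_0^H$ opposite in sign to $C_2^H$ so that the roots of $H$ straddle $\tau$, leaving exactly one zero of $H$ in $u<0$ and giving $(2,2,1)$ (for $\alpha_2^G=0$ the slaved $C_2^H$ is of higher order, $H$ is essentially linear, and the same count results). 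If $\alpha_2^G=-1$, then $C_2^F$ vanishes to leading order, so $F$ is essentially linear on the right and carries at most one zero there, while $C_2^H=-\sigma\neq0$ lets $G$ and $H$ each carry two left zeros; choosing their roots so that the induced linear part of $F$ has its zero in $u>0$ gives $(1,2,2)$.

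The main obstacle is the rigorous upgrade from the leading-order root placement to genuine simple zeros of the full functions, uniformly as $\delta\to\tilde\delta$: one must choose the hierarchy of scales so that the quadratic part dominates the $\mathcal{O}(u^3)$ remainder, verify by the implicit function theorem that the prescribed simple zeros persist and remain on the correct side, and track the higher-order corrections to $C_2^H$ coming from \eqref{cond:coef}. A secondary but delicate point is the sign bookkeeping that pins the transition exactly at $\alpha_2^G<0$ and isolates the two degenerate thresholds $\alpha_2^G=0$ and $\alpha_2^G=-1$, at which one of the three functions degenerates from quadratic to linear behavior and the corresponding entry of the configuration drops from $2$ to $1$.
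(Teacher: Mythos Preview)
Your proposal is correct and rests on the same skeleton as the paper's argument: use the rank condition to treat $(C_0^G,C_1^G,C_2^G,C_0^H,C_1^H)$ as free small parameters, observe that to leading order $C_2^H\approx\alpha_2^G C_2^G$ and $C_2^F\approx(1+\alpha_2^G)C_2^G$, and then split into the four cases $\alpha_2^G\ge0$, $-1<\alpha_2^G<0$, $\alpha_2^G=-1$, $\alpha_2^G<-1$, with $F$ counted on the right of $\tau$ and $G,H$ on the left.

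Where you differ is in the construction of the zeros. The paper builds them by an iterated sign--change scheme: fix $\tilde C_2^G>0$, pick a point $h_2$ on the appropriate side where the quadratic term gives a definite sign, then choose $\tilde C_1$ much smaller with the sign needed to flip the value at a closer point $h_1$, then choose $\tilde C_0$ even smaller to flip once more at $h_0$; the Intermediate Value Theorem yields the zeros, and the $\mathcal{O}((h-\tau)^3)$ remainders are absorbed automatically because one is only checking strict sign inequalities at finitely many points. Your approach instead solves the algebraic balance system
\[
(a_1+a_2)+\alpha_2^G(b_1+b_2)+(1+\alpha_2^G)(r_1+r_2)=0,\qquad
a_1a_2+\alpha_2^G b_1b_2=(1+\alpha_2^G)r_1r_2
\]
for the prescribed root locations, and then upgrades via the implicit function theorem. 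This buys you a transparent algebraic explanation of the threshold: when $\alpha_2^G\ge0$ every coefficient in the first balance relation is nonnegative with the coefficient of $(a_1+a_2)$ equal to $1$, so no positive solution exists and one zero must be relinquished. The paper's IVT scheme, on the other hand, makes the remainder control completely elementary (no IFT persistence step is needed); this is exactly the ``main obstacle'' you flag, and it is the price of the algebraic route. Either argument is adequate, and both isolate the degenerate thresholds $\alpha_2^G=0$ and $\alpha_2^G=-1$ as the points where one of $H$ or $F$ drops from quadratic to essentially linear behavior.
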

\begin{proof}
	By the condition \eqref{cond:rank} we can assume that
	$$\det\frac{\partial(C_{0}^G,C_{1}^G,C_{2}^G,C_{0}^H,C_{1}^H)}{\partial(\delta_1,\delta_{2},\delta_{3},\delta_{4},\delta_{5})}\tilde{(\delta)}\ne0,$$
	Then the change of parameters $\tilde{C}_j^G=C_j^G(\delta_1,\dots,\delta_{5},\tilde{\delta}_{6},\dots,\tilde{\delta}_{m})$, $j=0,1,2$, and $\tilde{C}_j^H=C_j^H(\delta_1,\dots,\delta_{5},\tilde{\delta}_{6},\dots,\tilde{\delta}_{m})$, $j=0,1$, has inverse $\delta_j(\tilde{C}_{0}^G,\tilde{C}_{1}^G,\tilde{C}_{2}^G,\tilde{C}_{0}^H,\tilde{C}_{1}^H)$, $j=1,\dots,5$, and can write \eqref{func:F}, \eqref{func:G} and \eqref{func:H} as
	\begin{eqnarray*}
		&&F(h)=\tilde{C}_0^F+\tilde{C}_1^F(h-\tau)+\tilde{C}_{2}^F(h-\tau)^{2}+\mathcal{O}((h-\tau)^{3}),\\
		&&G(h)=\tilde{C}_0^G+\tilde{C}_1^G(h-\tau)+\tilde{C}_{2}^G(h-\tau)^{2}+\mathcal{O}((h-\tau)^{3}),\\
		&&H(h)=\tilde{C}_0^H+\tilde{C}_1^H(h-\tau)+\tilde{C}_{2}^H(h-\tau)^{2}+\mathcal{O}((h-\tau)^{3}),
	\end{eqnarray*}
	with $\tilde{C}_{i}^G(\tilde{\delta})=0$, $i=0,1,2$, and  $\tilde{C}_{j}^{H}(\tilde{\delta})=0$,  $j=0,1$. 
	
	Let $\tilde{C}_{2}^G>0$ and $h_2^{G}$ such that $0<\tau-h_2^G\ll 1$ and
	$$\tilde{C}_{2}^G(h_{2}^G-\tau)^{2}>0.$$ 
	For $\tilde{C}_{2}^G$ fixed and $|\tilde{C}_{j}^k|\ll \tilde{C}_{2}^G$, $j=0,1,2$ and $k=G,H$, by equation $\eqref{cond:coef}$, we have $\tilde{C}_{2}^H=\alpha_{2}^G \tilde{C}_{2}^G+\mathcal{O}_1\big(\tilde{C}_{0}^G,\tilde{C}_{1}^G,\tilde{C}_{0}^H,\tilde{C}_{1}^H\big)$. Thus, assuming these conditions, we proceed with the proof.
	
	 As $\tilde{C}_{2}^F=\tilde{C}_{2}^G+\tilde{C}_{2}^H\approx(1+\alpha_{2}^G)\tilde{C}_{2}^G$, we distinguish the following four cases: $\alpha_{2}^G\geq0$, $-1<\alpha_{2}^G<0$, $\alpha_{2}^G=-1$ and  $\alpha_{2}^G<-1$. Let us suppose first that $-1<\alpha_{2}^G<0$. For this case, we have that $\tilde{C}_{2}^F>0$ and $\tilde{C}_{2}^H<0$. Now, consider $h_2^{F}$ and $h_2^{H}$ such that $0<h_2^{F}-\tau\ll 1$, $0<\tau - h_2^{H}\ll 1$ and
	\begin{equation*}
	\begin{aligned}
	\tilde{C}_{2}^F(h_{2}^F-\tau)^{2}&>0,\\
	\tilde{C}_{2}^H(h_{2}^H-\tau)^{2}&<0.
		\end{aligned}
\end{equation*} 
	Take $\tilde{C}_{1}^G$ and $\tilde{C}_{1}^H$ such that $|\tilde{C}_{1}^G|\ll\tilde{C}_{2}^G$, $\tilde{C}_{1}^G\tilde{C}_{2}^G>0$, $|\tilde{C}_{1}^H|\ll\tilde{C}_{2}^H$, $\tilde{C}_{1}^H\tilde{C}_{2}^H>0$, $\tilde{C}_{1}^G<|\tilde{C}_{1}^H|$, $|\tilde{C}_{1}^G+\tilde{C}_{1}^H|\ll\tilde{C}_{2}^F$ and
	\begin{equation*}
		\begin{aligned}
			\tilde{C}_{1}^G(h_{2}^G-\tau)+\tilde{C}_{2}^G(h_{2}^G-\tau)^2&>0,\\
			\tilde{C}_{1}^H(h_{2}^H-\tau)+\tilde{C}_{2}^H(h_{2}^H-\tau)^{2}&<0.
		\end{aligned}
	\end{equation*}
	As $\tilde{C}_{1}^G>0$, $\tilde{C}_{1}^H<0$, $|\tilde{C}_{1}^G+\tilde{C}_{1}^H|\ll\tilde{C}_{2}^F$ and $\tilde{C}_{1}^F=\tilde{C}_{1}^G+\tilde{C}_{1}^H<0$ then $|\tilde{C}_{1}^F|\ll\tilde{C}_{2}^F$, $\tilde{C}_{1}^F\tilde{C}_{2}^F<0$ and 
	$$\tilde{C}_{1}^F(h_{2}^F-\tau)+\tilde{C}_{2}^F(h_{2}^F-\tau)^2>0.$$
	Now, as $\tilde{C}_{1}^F<0$, $\tilde{C}_{1}^G>0$ and $\tilde{C}_{1}^H<0$, we can choose $h_1^{F}$, $h_1^{G}$ and $h_1^{H}$ such that $0<h_1^{F}-\tau\ll h_2^{F}-\tau\ll 1$, $0<\tau-h_1^{G}\ll\tau-h_2^{G}\ll 1$,  $0<\tau-h_1^{H}\ll\tau-h_2^{H}\ll 1$ and 
	\begin{equation*}
		\begin{aligned}
			\tilde{C}_{1}^F(h_{1}^F-\tau)+\tilde{C}_{2}^F(h_{1}^F-\tau)^2&<0,\\
			\tilde{C}_{1}^G(h_{1}^G-\tau)+\tilde{C}_{2}^G(h_{1}^G-\tau)^2&<0,\\
			\tilde{C}_{1}^H(h_{1}^H-\tau)+\tilde{C}_{2}^H(h_{1}^H-\tau)^2&>0
		\end{aligned}
	\end{equation*}
	Therefore, the equations
	\begin{equation*}
		\begin{aligned}
			\tilde{C}_{1}^F(h-\tau)+\tilde{C}_{2}^F(h-\tau)^2&=0,\\
			\tilde{C}_{1}^G(h-\tau)+\tilde{C}_{2}^G(h-\tau)^2&=0,\\
			\tilde{C}_{1}^H(h-\tau)+\tilde{C}_{2}^H(h-\tau)^2&=0,
		\end{aligned}
	\end{equation*}
	have roots $\tilde{h}_2^F$, $\tilde{h}_2^G$ and $\tilde{h}_2^H$, respectively, with  $h_1^F<\tilde{h}_2^F<h_2^F$, $h_2^G<\tilde{h}_2^G<h_1^G$ and $h_2^H<\tilde{h}_2^H<h_1^H$.
	
	Take $\tilde{C}_{0}^G$ and $\tilde{C}_{0}^H$ such that $|\tilde{C}_{0}^G|\ll|\tilde{C}_{1}^G|$, $\tilde{C}_{0}^G\tilde{C}_{1}^G>0$, $|\tilde{C}_{0}^H|\ll|\tilde{C}_{1}^H|$, $\tilde{C}_{0}^H\tilde{C}_{1}^H>0$, $|\tilde{C}_{0}^G+\tilde{C}_{0}^H|\leq|\tilde{C}_{1}^F|$, $|\tilde{C}_{0}^H|<\tilde{C}_{0}^G$ and 	
\begin{equation*}
	\begin{aligned}
		\tilde{C}_{0}^G+\tilde{C}_{1}^G(h_1^G-\tau)+\tilde{C}_{2}^G(h_1^G-\tau)^2&<0,\\
		\tilde{C}_{0}^H+\tilde{C}_{1}^H(h_1^H-\tau)+\tilde{C}_{2}^H(h_{1}^H-\tau)^2&>0.
	\end{aligned}
\end{equation*}
Now, as $\tilde{C}_{0}^G>0$, $\tilde{C}_{0}^H<0$, $|\tilde{C}_{0}^G+\tilde{C}_{0}^H|\leq|\tilde{C}_{1}^F|$ and $\tilde{C}_{0}^F=\tilde{C}_{0}^G+\tilde{C}_{0}^H>0$, then $|\tilde{C}_{0}^F|\ll|\tilde{C}_{1}^F|$, $\tilde{C}_{0}^F\tilde{C}_{1}^F<0$ and 
$$\tilde{C}_{0}^F+\tilde{C}_{1}^F(h_1^F-\tau)+\tilde{C}_{2}^F(h_1^F-\tau)^2<0.$$
Moreover, we can choose $h_0^{F}$, $h_0^{G}$ and $h_0^{H}$  such that $0<h_0^{F}-\tau\ll h_1^{F}-\tau\ll 1$, $0<\tau-h_0^{G}\ll\tau-h_1^{G}\ll 1$, $0<\tau-h_0^{H}\ll\tau-h_1^{H}\ll 1$ and 
\begin{equation*}
	\begin{aligned}
		\tilde{C}_{0}^F+\tilde{C}_{1}^F(h_0^F-\tau)+\tilde{C}_{2}^F(h_0^F-\tau)^2&>0,\\
		\tilde{C}_{0}^G+\tilde{C}_{1}^G(h_0^G-\tau)+\tilde{C}_{2}^G(h_0^G-\tau)^2&>0,\\
		\tilde{C}_{0}^H+\tilde{C}_{1}^H(h_0^H-\tau)+\tilde{C}_{2}^H(h_0^H-\tau)^2&<0.
	\end{aligned}
\end{equation*}
Therefore, the equations
\begin{equation*}
	\begin{aligned}
		\tilde{C}_{0}^F+\tilde{C}_{1}^F(h-\tau)+\tilde{C}_{2}^F(h-\tau)^2&=0,\\
		\tilde{C}_{0}^G+\tilde{C}_{1}^G(h-\tau)+\tilde{C}_{2}^G(h-\tau)^2&=0,\\
		\tilde{C}_{0}^H+\tilde{C}_{1}^H(h-\tau)+\tilde{C}_{2}^H(h-\tau)^2&=0,
	\end{aligned}
\end{equation*}
have roots $\tilde{h}_1^F$, $\tilde{h}_1^G$ and $\tilde{h}_1^H$, respectively, such that $h_0^F<\tilde{h}_1^F<h_1^F<\tilde{h}_2^F<h_2^F$,  $h_2^G<\tilde{h}_2^G<h_1^G<\tilde{h}_1^G<h_0^G$ and $h_2^H<\tilde{h}_2^H<h_1^H<\tilde{h}_1^H<h_0^H$. Note that we could have done all of the above computes by adding at each step the remainders of the functions \eqref{func:F}, \eqref{func:G} and \eqref{func:H} (indicated by the Landau Symbols). Thus, when $-1<\alpha_{2}^G<0$, we have the configuration of zeros $(2,2,2)$. 

The proof for the case $\alpha_{2}^G<-1$ is analogous to the previous one. Note that  $\tilde{C}_{2}^F<0$, and so we must choose the appropriate coefficients  $\tilde{C}_{i}^G>0$, $\tilde{C}_{i}^H<0$, $i=0,1,2$, $\tilde{C}_{1}^F>0$ and $\tilde{C}_{0}^F<0$. 

\medskip

Now, suppose that $\alpha_2^G=-1$. For this case, we have that $\tilde{C}_{2}^F=\mathcal{O}_1\big(\tilde{C}_{0}^G,\tilde{C}_{1}^G,\tilde{C}_{0}^H,\tilde{C}_{1}^H\big)$ and $\tilde{C}_{2}^H<0$. Hence, we cannot measure $|\tilde{C}_{2}^F|$ fixing only the value of $\tilde{C}_{2}^G$. So let us disregard the coefficient $\tilde{C}_{2}^F$ in the process of changing signs in the values of the image of the function \eqref{func:F}. Let $h_2^H$ such that $0<\tau-h_2^H\ll1$ and 
$$\tilde{C}_{2}^H(h_{2}^H-\tau)^{2}<0.$$ 
Take $\tilde{C}_{1}^G$ and $\tilde{C}_{1}^H$ such that $|\tilde{C}_{1}^G|\ll\tilde{C}_{2}^G$, $\tilde{C}_{1}^G\tilde{C}_{2}^G>0$, $|\tilde{C}_{1}^H|\ll|\tilde{C}_{2}^H|$, $\tilde{C}_{1}^H\tilde{C}_{2}^H>0$, $|\tilde{C}_{1}^H|<\tilde{C}_{1}^G$ and 
	\begin{equation*}
	\begin{aligned}
		\tilde{C}_{1}^G(h_2^G-\tau)+\tilde{C}_{2}^G(h_2^G-\tau)^2&>0,\\
		\tilde{C}_{1}^H(h_2^H-\tau)+\tilde{C}_{2}^H(h_2^H-\tau)^2&<0.
	\end{aligned}
\end{equation*}
As $\tilde{C}_{1}^G+\tilde{C}_{1}^H>0$ and $\tilde{C}_{1}^G+\tilde{C}_{1}^H=\tilde{C}_{1}^F$, then we have that $\tilde{C}_{1}^F>0$. Let $h_1^F$ such that $0<h_1^F-\tau\ll1$ and 
$$\tilde{C}_{1}^F(h_{1}^F-\tau)>0.$$ 
Now, as $\tilde{C}_{1}^G>0$ and $\tilde{C}_{1}^H<0$, we can choose $h_1^G$ and $h_1^H$ such that $0<\tau-h_1^G\ll\tau-h_2^G\ll1$, $0<\tau-h_1^H\ll\tau-h_2^H\ll1$ and
	\begin{equation*}
	\begin{aligned}
		\tilde{C}_{1}^G(h_1^G-\tau)+\tilde{C}_{2}^G(h_1^G-\tau)^2&<0,\\
		\tilde{C}_{1}^H(h_1^H-\tau)+\tilde{C}_{2}^H(h_1^H-\tau)^2&>0.
	\end{aligned}
\end{equation*} 
Therefore, the equations 
	\begin{equation*}
	\begin{aligned}
		\tilde{C}_{1}^G(h-\tau)+\tilde{C}_{2}^G(h-\tau)^2&=0,\\
		\tilde{C}_{1}^H(h-\tau)+\tilde{C}_{2}^H(h-\tau)^2&=0,
	\end{aligned}
\end{equation*}
have roots $\tilde{h}_2^G$ and $\tilde{h}_2^H$, respectively, such that $h_2^G<\tilde{h}_2^G<h_1^G$ and $h_2^H<\tilde{h}_2^H<h_1^H$.

Take $\tilde{C}_{0}^G$ and $\tilde{C}_{0}^H$ such that $|\tilde{C}_{0}^G|\ll\tilde{C}_{1}^G$, $\tilde{C}_{0}^G\tilde{C}_{1}^G>0$, $|\tilde{C}_{0}^H|\ll\tilde{C}_{1}^H$, $\tilde{C}_{0}^H\tilde{C}_{1}^H>0$, $|\tilde{C}_{0}^G+\tilde{C}_{0}^H|\ll|\tilde{C}_{1}^F|$, $\tilde{C}_{0}^G<|\tilde{C}_{0}^H|$  and
	\begin{equation*}
	\begin{aligned}
		\tilde{C}_{0}^G+\tilde{C}_{1}^G(h_1^G-\tau)+\tilde{C}_{2}^G(h_1^G-\tau)^2&<0,\\
		\tilde{C}_{0}^H+\tilde{C}_{1}^H(h_1^H-\tau)+\tilde{C}_{2}^H(h_1^H-\tau)^2&>0.
	\end{aligned}
\end{equation*} 
As  $\tilde{C}_{0}^G<|\tilde{C}_{0}^H|$, $|\tilde{C}_{0}^G+\tilde{C}_{0}^H|\ll\tilde{C}_{1}^F$ and $\tilde{C}_{0}^G+\tilde{C}_{0}^H=\tilde{C}_{0}^F$ then $|\tilde{C}_{0}^F|\ll\tilde{C}_{1}^F$, $\tilde{C}_{0}^F\tilde{C}_{1}^F<0$ and 
$$\tilde{C}_{0}^F+\tilde{C}_{1}^F(h_1^F-\tau)>0.$$
Now, as $\tilde{C}_{0}^F<0$, $\tilde{C}_{0}^G>0$ and $\tilde{C}_{0}^H<0$, we can choose $h_0^F$, $h_0^G$ and $h_0^H$ such that  $0<h_0^F-\tau\ll h_1^F-\tau\ll1$, $0<\tau-h_0^G\ll\tau-h_1^G\ll1$, $0<\tau-h_0^H\ll\tau-h_1^H\ll1$ and
\begin{equation*}
	\begin{aligned}
		\tilde{C}_{0}^F+\tilde{C}_{1}^F(h_0^F-\tau)&<0,\\
		\tilde{C}_{0}^G+\tilde{C}_{1}^G(h_0^G-\tau)+\tilde{C}_{2}^G(h_0^G-\tau)^2&>0,\\
		\tilde{C}_{0}^H+\tilde{C}_{1}^H(h_0^H-\tau)+\tilde{C}_{2}^H(h_0^H-\tau)^2&<0.
	\end{aligned}
\end{equation*}
Therefore, the equations
\begin{equation*}
	\begin{aligned}
		\tilde{C}_{0}^F+\tilde{C}_{1}^F(h-\tau)&=0,\\
		\tilde{C}_{0}^G+\tilde{C}_{1}^G(h-\tau)+\tilde{C}_{2}^G(h-\tau)^2&=0,\\
		\tilde{C}_{0}^H+\tilde{C}_{1}^H(h-\tau)+\tilde{C}_{2}^H(h-\tau)^2&=0,
	\end{aligned}
\end{equation*}
have roots $\tilde{h}_1^F$, $\tilde{h}_1^G$ and $\tilde{h}_1^H$, respectively, such that $h_0^F<\tilde{h}_1^F<h_1^F$, $h_2^G<\tilde{h}_2^G<h_1^G<\tilde{h}_1^G<h_0^G$ and $h_2^H<\tilde{h}_2^H<h_1^H<\tilde{h}_1^H<h_0^H$. Then, when $\alpha_{2}^G=-1$, we have the configuration of zeros $(1,2,2)$. 

The proof for the case $\alpha_{2}^G\geq0$ is analogous to the previous one. Observe that  $\tilde{C}_{2}^F>0$ and $\tilde{C}_{2}^H>0$ (or $\tilde{C}_{2}^H=\mathcal{O}_1\big(\tilde{C}_{0}^G,\tilde{C}_{1}^G,\tilde{C}_{0}^H,\tilde{C}_{1}^H\big)$). Hence, we cannot be sure that $\tilde{C}_{2}^H<0$ by setting $\tilde{C}_{2}^G>0$. Therefore, similar to the previous case, we disregard the coefficient $\tilde{C}_{2}^H$ in the process of changing signs in the values of the image of the function \eqref{func:H}. Choose the appropriate coefficients  $\tilde{C}_{i}^G>0$, $\tilde{C}_{i}^H<0$, $i=0,1$, $\tilde{C}_{1}^F<0$ and $\tilde{C}_{0}^F>0$, we have the configuration of zeros $(2,2,1)$ for this case. 
\end{proof}


\medskip  

The next lemma is similar to Lemma 11 from \cite{Pes22b} and its proof is analogous. 

\begin{lemma}\label{lem:01}
	Let $f:\mathbb{R}\rightarrow\mathbb{R}$ be a function of class $\mathbb{C}^{k+1}$, $k\geq 1$. Then 
	\begin{eqnarray}
		&& f(x)\log(\tau-x)=\mathcal{P}(x)\log(\tau-x)+\mathcal{R}(x),\quad\text{when}\quad x<\tau,\label{cond1:lem}\\
		&& f(x)\log(x-\tau)=\mathcal{P}(x)\log(x-\tau)+\mathcal{R}(x),\quad\text{when}\quad x>\tau,\label{cond2:lem}
	\end{eqnarray}
	where $\mathcal{P}(x)=\sum_{i=0}^{k}\frac{f^{(i)}(\tau)}{i!}(x-\tau)^i$ is the Taylor’s polynomial of $f$ at $x=\tau$ of degree $k$ and $\mathcal{R}(x)=\log(\pm(\tau-x))(f(x)-\mathcal{P}(x))$. Moreover, $\lim_{x\rightarrow \tau}\mathcal{R}(x)=0$.
\end{lemma}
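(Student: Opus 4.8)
The plan is to observe first that the two displayed equations \eqref{cond1:lem} and \eqref{cond2:lem} are mere algebraic identities once $\mathcal{R}$ is defined as in the statement, so the only substantive assertion to be verified is the limit $\lim_{x\to\tau}\mathcal{R}(x)=0$. Indeed, writing $\mathcal{R}(x)=\log(\pm(\tau-x))\bigl(f(x)-\mathcal{P}(x)\bigr)$ and adding $\mathcal{P}(x)\log(\pm(\tau-x))$ recovers $f(x)\log(\pm(\tau-x))$ at once; no work is needed there.

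For the limit I would invoke Taylor's theorem with Lagrange remainder. Since $f\in\mathbb{C}^{k+1}$ and $\mathcal{P}$ is its degree-$k$ Taylor polynomial at $\tau$, for each $x$ near $\tau$ there exists $\xi$ between $x$ and $\tau$ such that
$$f(x)-\mathcal{P}(x)=\frac{f^{(k+1)}(\xi)}{(k+1)!}(x-\tau)^{k+1}.$$
Because $f^{(k+1)}$ is continuous, it is bounded on a compact neighborhood of $\tau$, say by $M$, whence $|f(x)-\mathcal{P}(x)|\leq \frac{M}{(k+1)!}\,|x-\tau|^{k+1}$ there.

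Combining this bound with the definition of $\mathcal{R}$ yields
$$|\mathcal{R}(x)|\leq \frac{M}{(k+1)!}\,|x-\tau|^{k+1}\,\bigl|\log|\tau-x|\bigr|.$$
Setting $u=|\tau-x|$ then reduces the matter to the elementary limit $\lim_{u\to 0^+}u^{k+1}\log u=0$, valid since $k+1\geq 2>0$; this is the sole analytic input and follows, for instance, from L'H\^opital's rule applied to $\log u\,/\,u^{-(k+1)}$. Hence $\mathcal{R}(x)\to 0$ as $x\to\tau$, completing the proof. I do not expect a genuine obstacle here: the estimate is routine, and the only point demanding care is matching the sign convention $\pm$ inside $\log(\pm(\tau-x))$ to the side under consideration ($x<\tau$ or $x>\tau$), which is irrelevant to the absolute-value bound above.
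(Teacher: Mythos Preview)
Your proof is correct and complete. The paper itself does not give a proof of this lemma, merely remarking that it is analogous to Lemma~11 of \cite{Pes22b}; your argument via Taylor's theorem with Lagrange remainder together with the elementary limit $\lim_{u\to 0^+}u^{k+1}\log u=0$ is exactly the natural way to fill in the details.
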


\bigskip


\begin{proof}[Proof of Theorem \ref{the:01}] In order to prove the Theorem \ref{the:01}, let us consider the Melnikov functions $M_0(h)$, $M_1(h)$ and $M_2(h)$ associated to system \eqref{eq:01}  given by Theorems \ref{theo:sss}, \ref{theo:css} and \ref{theo:csc}. We will distinguish the prove in three cases. We begin with the case where the system $\eqref{eq:01}|_{\epsilon=0}$ is of the type SSS. For this case, we have two sub-cases. The first one is when $\tau_{\scriptscriptstyle LS}\ne\tau_{\scriptscriptstyle RS}$ (i.e. we have a homoclinic loop) and the second one is when $\tau_{\scriptscriptstyle LS}=\tau_{\scriptscriptstyle RS}$ (i.e. we have two heteroclinic orbits). The second and third cases are when the system $\eqref{eq:01}|_{\epsilon=0}$ is of the type CSS and CSC, respectively. 
	
\bigskip
	
\noindent{\it Case SSS.}  Consider the Melnikov functions given by the Theorem \ref{theo:sss}. When we have homoclinic loop, i.e. $\tau_{\scriptscriptstyle LS}\ne\tau_{\scriptscriptstyle RS}$, by Lemma \ref{lem:01}, we can expand the functions \eqref{mel:sss00}--\eqref{mel:sss20} at $h=1$  as
\begin{eqnarray}
&& M_0(h)=\sum_{j=0}^{2}C_{0}^j(h-1)^j+\sum_{j=1}^{2}D_{0}^j\log(h-1)(h-1)^j + \mathcal{O}((h-1)^3),\label{exp:sssm01}\\
&& M_1(h)=\sum_{j=0}^{2}C_{1}^j(h-1)^j+\sum_{j=1}^{2}D_{1}^j\log(1-h)(h-1)^j + \mathcal{O}((h-1)^3),\label{exp:sssm11}\\
&& M_2(h)=\sum_{j=0}^{2}C_{2}^j(h-1)^j+\sum_{j=1}^{2}D_{2}^j\log(1-h)(h-1)^j + \mathcal{O}((h-1)^3),\label{exp:sssm21}
\end{eqnarray}
where
\begin{equation*}
	\begin{aligned}
		& C_{0}^0 = \frac{2b_{\scriptscriptstyle R}}{b_{\scriptscriptstyle L}}(r_{10}-r_{00})+2 (p_{00} + p_{10} - b_{\scriptscriptstyle R} u_{10} +b_{\scriptscriptstyle R} v_{01}) + \frac{b_{\scriptscriptstyle R}\tau_{\scriptscriptstyle LS}}{\omega_{\scriptscriptstyle LS}}(r_{10} + s_{01}) + \frac{b_{\scriptscriptstyle R}\tau_{\scriptscriptstyle RS}}{\omega_{\scriptscriptstyle RS}}\\
		&\quad\quad\quad(p_{10} + q_{01}) - \frac{b_{\scriptscriptstyle R}}{2\omega_{\scriptscriptstyle LS}}(r_{10} + s_{01})(\tau_{\scriptscriptstyle LS}^2-1)\log\bigg(\frac{\tau_{\scriptscriptstyle LS}+1}{\tau_{\scriptscriptstyle LS}-1}\bigg) - \frac{b_{\scriptscriptstyle R}}{2\omega_{\scriptscriptstyle RS}}(p_{10} + q_{01}) \\
		&\quad\quad\quad (\tau_{\scriptscriptstyle RS}^2-1)\log\bigg(\frac{\tau_{\scriptscriptstyle RS}+1}{\tau_{\scriptscriptstyle RS}-1}\bigg),\\
		& C_{0}^1 =\frac{2b_{\scriptscriptstyle R}}{b_{\scriptscriptstyle L}}(r_{10}-r_{00})+2 (p_{00} + p_{10} - b_{\scriptscriptstyle R} u_{10} +b_{\scriptscriptstyle R} v_{01})+b_{\scriptscriptstyle R} (u_{10} + v_{01}) \log(4) + \frac{b_{\scriptscriptstyle R}}{\omega_{\scriptscriptstyle LS}}\\
		&\quad\quad\quad(r_{10} + s_{01})\log\bigg(\frac{\tau_{\scriptscriptstyle LS}+1}{\tau_{\scriptscriptstyle LS}-1}\bigg) +  \frac{b_{\scriptscriptstyle R}}{\omega_{\scriptscriptstyle RS}}(p_{10} + q_{01})\log\bigg(\frac{\tau_{\scriptscriptstyle RS}+1}{\tau_{\scriptscriptstyle RS}-1}\bigg),
		\end{aligned}
	\end{equation*}
\begin{equation*}
	\begin{aligned}
		& C_{0}^2 =\frac{b_{\scriptscriptstyle R}}{2}\bigg( 2 u_{10} + 2\bigg( v_{01} + \frac{\tau_{\scriptscriptstyle LS}}{\omega_{\scriptscriptstyle LS}(\tau_{\scriptscriptstyle LS}^2-1)}(r_{10} + s_{01}) +  \frac{\tau_{\scriptscriptstyle RS}}{\omega_{\scriptscriptstyle RS}(\tau_{\scriptscriptstyle RS}^2-1)}(p_{10} + q_{01}) \bigg) \\
		&\quad\quad\quad+(u_{10} + v_{01}) \log(4) + \frac{1}{\omega_{\scriptscriptstyle LS}}(r_{10} + s_{01})\log\bigg(\frac{\tau_{\scriptscriptstyle LS}+1}{\tau_{\scriptscriptstyle LS}-1}\bigg)+\frac{1}{\omega_{\scriptscriptstyle RS}}(p_{10} + q_{01})\\
		&\quad\quad\quad\log\bigg(\frac{\tau_{\scriptscriptstyle RS}+1}{\tau_{\scriptscriptstyle RS}-1}\bigg)\bigg),
	\end{aligned}
	\end{equation*}
\begin{equation*}
	\begin{aligned}
	& C_{1}^0 = 2 p_{00} + 2 p_{10} + b_{\scriptscriptstyle R} (v_{01} - 2 u_{00} - u_{10} ) + \frac{b_{\scriptscriptstyle R}\tau_{\scriptscriptstyle RS}}{\omega_{\scriptscriptstyle RS}}(p_{10} + q_{01}) - \frac{b_{\scriptscriptstyle R}}{2\omega_{\scriptscriptstyle RS}}(p_{10} + q_{01})\,\,\\
	&\quad\quad\quad(\tau_{\scriptscriptstyle RS}^2-1)\log\bigg(\frac{\tau_{\scriptscriptstyle RS}+1}{\tau_{\scriptscriptstyle RS}-1}\bigg),\\
	& C_{1}^1 = 2 p_{00} + 2 p_{10} + b_{\scriptscriptstyle R} \Big(v_{01} - 2 u_{00} - u_{10} + (u_{10} + v_{01}) \log(2)\Big) +  \frac{b_{\scriptscriptstyle R}}{\omega_{\scriptscriptstyle RS}}(p_{10} + q_{01})\\
	&\quad\quad\quad\log\bigg(\frac{\tau_{\scriptscriptstyle RS}+1}{\tau_{\scriptscriptstyle RS}-1}\bigg),
	\end{aligned}
\end{equation*}
\begin{equation*}
	\begin{aligned}
	& C_{1}^2 = \frac{1}{2}\bigg(\frac{2\tau_{\scriptscriptstyle RS}}{\omega_{\scriptscriptstyle RS}(\tau_{\scriptscriptstyle RS}^2-1)}(p_{10} + q_{01}) + (u_{10} + v_{01}) (1 + \log(2)) +  \frac{1}{\omega_{\scriptscriptstyle RS}}(p_{10} + q_{01})\quad\\
	&\quad\quad\quad \log\bigg(\frac{\tau_{\scriptscriptstyle RS}+1}{\tau_{\scriptscriptstyle RS}-1}\bigg)\bigg),\\
	& C_{2}^0 = -\frac{2}{b_{\scriptscriptstyle L}}(r_{00} - r_{10})+2 u_{00} - u_{10} + v_{01}+\frac{\tau_{\scriptscriptstyle LS}}{\omega_{\scriptscriptstyle LS}}(r_{10} + s_{01}) + \frac{1}{2\omega_{\scriptscriptstyle LS}}(r_{10} + s_{01})\\
	&\quad\quad\quad (\tau_{\scriptscriptstyle LS}^2-1)\log\bigg(\frac{\tau_{\scriptscriptstyle LS}+1}{\tau_{\scriptscriptstyle LS}-1}\bigg),
		\end{aligned}
\end{equation*}
\begin{equation*}
\begin{aligned}
	& C_{2}^1 = -\frac{2}{b_{\scriptscriptstyle L}}(r_{00} - r_{10})+2 u_{00} - u_{10} + v_{01} + (u_{10} + v_{01}) \log(2) + \frac{1}{\omega_{\scriptscriptstyle LS}}(r_{10} + s_{01})\\
	&\quad\quad\quad \log\bigg(\frac{\tau_{\scriptscriptstyle LS}+1}{\tau_{\scriptscriptstyle LS}-1}\bigg),\\
	& C_{2}^2 = \frac{1}{2}\bigg(\frac{2\tau_{\scriptscriptstyle LS}}{\omega_{\scriptscriptstyle LS}(\tau_{\scriptscriptstyle LS}^2-1)}(r_{10} + s_{01}) + (u_{10} + v_{01}) (1 + \log(2)) +  \frac{1}{\omega_{\scriptscriptstyle LS}}(r_{10} + s_{01})\quad\\
	&\quad\quad\quad \log\bigg(\frac{\tau_{\scriptscriptstyle LS}+1}{\tau_{\scriptscriptstyle LS}-1}\bigg)\bigg),
	\end{aligned}
\end{equation*}
and $D_{i}^k$, for $i=0,1,2$ and $k=1,2$, depending on the parameters of system \eqref{eq:01}, whose expressions have been omitted for simplicity. Now, when we have two    heteroclinic orbits, i.e. $\tau_{\scriptscriptstyle LS}=\tau_{\scriptscriptstyle RS}$, the expansions of Melnikov functions are the same as in \eqref{exp:sssm01}--\eqref{exp:sssm21} with $C_{i}^j=C_{i}^j\mid_{\tau_{\scriptscriptstyle LS}=\tau_{\scriptscriptstyle RS}}$ and $D_{i}^j=D_{i}^j\mid_{\tau_{\scriptscriptstyle LS}=\tau_{\scriptscriptstyle RS}}$, $i=0,1,2$, $j=0,1,2,3$. Moreover, as 
$$\lim_{h\rightarrow 1}\log(h-1)(h-1)^j=0,\quad j=1,2,\quad\text{for}\quad 0<h-1,$$
$$\lim_{h\rightarrow 1}\log(1-h)(h-1)^j=0,\quad j=1,2,\quad\text{for}\quad 0<1-h,$$
the factors $\log(\pm(h-1))$ of the expansions in \eqref{exp:sssm01}--\eqref{exp:sssm21}  can be disregarded in the study of the number of zeros. Therefore, will consider only the coefficients $C_{i}^k$, for $i=0,1,2$ and $k=0,1,2,3$.

\medskip

Further, when $\tau_{\scriptscriptstyle LS}\ne\tau_{\scriptscriptstyle RS}$, we have that 
\begin{equation}\label{eq:coefdifsss}
	\begin{aligned}
		& C_{0}^0 -(C_{1}^0+C_{2}^0) = \bigg(\frac{b_{\scriptscriptstyle R}-1}{2b_{\scriptscriptstyle L}\omega_{\scriptscriptstyle LS}}\bigg)\bigg(4\omega_{\scriptscriptstyle LS}(r_{10} - r_{00} ) + 
		2 b_{\scriptscriptstyle L} (2 u_{00} - u_{10} + v_{01})\omega_{\scriptscriptstyle LS}   \\
		&\quad\quad \quad\quad\quad\quad\quad\quad+ 2 b_{\scriptscriptstyle L} (r_{10} + s_{01}) \tau_{\scriptscriptstyle LS} - b_{\scriptscriptstyle L} (r_{10} + s_{01}) ( \tau_{\scriptscriptstyle LS}^2-1)\log\bigg(\frac{\tau_{\scriptscriptstyle LS}+1}{\tau_{\scriptscriptstyle LS}-1}\bigg)\bigg),\\
		& C_{0}^1 -(C_{1}^1+C_{2}^1) = \bigg(\frac{b_{\scriptscriptstyle R}-1}{b_{\scriptscriptstyle L}\omega_{\scriptscriptstyle LS}}\bigg)\bigg(\omega_{\scriptscriptstyle LS} \Big(2( r_{10}-r_{00}) + 
		b_{\scriptscriptstyle L} \Big(2 u_{00} - u_{10} + v_{01}  \\
		&\quad\quad\quad \quad\quad\quad\quad\quad+ (u_{10} + v_{01})\log(2)\Big)\Big) + b_{\scriptscriptstyle L} (r_{10} + s_{01})\log\bigg(\frac{\tau_{\scriptscriptstyle LS}+1}{\tau_{\scriptscriptstyle LS}-1}\bigg) \bigg),\\
		& C_{0}^2 -(C_{1}^2+C_{2}^2) = \bigg(\frac{b_{\scriptscriptstyle R}-1}{2\omega_{\scriptscriptstyle LS}(\tau_{\scriptscriptstyle LS}^2-1)}\bigg)\bigg(2 (r_{10} + s_{01}) \tau_{\scriptscriptstyle LS} + (u_{10} + v_{01}) \omega_{\scriptscriptstyle LS} (\tau_{\scriptscriptstyle LS}^2-1) \\
		&\quad\quad\quad \quad\quad\quad\quad\quad(1 +\log(2)) + (r_{10} + s_{01}) ( \tau_{\scriptscriptstyle LS}^2-1)\log\bigg(\frac{\tau_{\scriptscriptstyle LS}+1}{\tau_{\scriptscriptstyle LS}-1}\bigg)\bigg).\\
	\end{aligned}
\end{equation}
Thus, if $b_{\scriptscriptstyle R}=1$, we have that $C_{0}^0=C_{1}^0+C_{2}^0$, $C_{0}^1=C_{1}^1+C_{2}^1$ and $C_{0}^2=C_{1}^2+C_{2}^2$. 
Now, when
\begin{equation*}
	\begin{aligned}
		& p_{00} = \frac{1}{4\omega_{\scriptscriptstyle RS}}\bigg(2 (u_{10} - v_{01} + 2( u_{00} - p_{10}) ) \omega_{\scriptscriptstyle RS} - 2 (p_{10} + q_{01}) \tau_{\scriptscriptstyle RS} + (p_{10} + q_{01}) (\tau_{\scriptscriptstyle RS}^2-1 )\\
		&\quad\quad\quad\log\bigg(\frac{\tau_{\scriptscriptstyle RS}+1}{\tau_{\scriptscriptstyle RS}-1}\bigg)\bigg),\\
		& u_{10} = -\frac{1}{\omega_{\scriptscriptstyle RS}\log(4)}\bigg(-2 (p_{10} + q_{01}) \tau_{\scriptscriptstyle RS} + v_{01} \omega_{\scriptscriptstyle RS} \log(4) + (p_{10} + q_{01}) (1 + \tau_{\scriptscriptstyle RS}^2)\\
		&\quad\quad\quad\log\bigg(\frac{\tau_{\scriptscriptstyle RS}+1}{\tau_{\scriptscriptstyle RS}-1}\bigg)\bigg),\\
		& p_{10} = - q_{01},\\
		& r_{00} = \frac{1}{4\omega_{\scriptscriptstyle LS}}\bigg(4 (r_{10} + b_{\scriptscriptstyle L} (u_{00} + v_{01})) \omega_{\scriptscriptstyle RS} + 
		2 b_{\scriptscriptstyle L} (r_{10} + s_{01}) \tau_{\scriptscriptstyle LS} - 
		b_{\scriptscriptstyle L} (r_{10} + s_{01}) ( \tau_{\scriptscriptstyle LS}^2-1)\\
		&\quad\quad\quad\log\bigg(\frac{\tau_{\scriptscriptstyle LS}+1}{\tau_{\scriptscriptstyle LS}-1}\bigg)\bigg),\\
		& r_{10} = - s_{01},
	\end{aligned}
\end{equation*}
we have that 
$$C_{1}^0=C_{1}^1=C_{1}^2=C_{2}^0=C_{2}^1=C_{2}^2=0,\quad\text{with}\quad\text{rank}\frac{\partial(C_{1}^0,C_{1}^1,C_{1}^2,C_{2}^0, C_{2}^1)}{\partial(p_{00},u_{10},p_{10},r_{00},r_{10})}=5.$$ 
Moreover, $C_{2}^2=\alpha_1^2 C_{1}^2+\alpha_1^0 C_{1}^0+\alpha_1^1 C_{1}^1+\alpha_2^0 C_{2}^0+\alpha_2^1 C_{2}^1$, where $\alpha_1^2$ correspond to $\alpha_2^G$ from proof of Proposition \ref{the:coef} and it is given by
$$\alpha_1^2=\frac{\phi(\tau_{\scriptscriptstyle RS})}{\phi(\tau_{\scriptscriptstyle LS})},\quad \text{with}\quad \phi(\tau)=\frac{\lambda_1(\tau)}{\lambda_2(\tau)},$$
where
\begin{equation*}
	\begin{aligned}
		&\lambda_1(\tau)=(\tau^2-1)\bigg((\tau^2+1)\log\bigg(\dfrac{\tau+1}{\tau-1}\bigg)-2\tau\bigg),\\
		&\lambda_2(\tau)=\tau\Big(2-\log(4)-\tau^2(2+\log(4))\Big)+\Big(\log(2)-1+\tau^4(1+\log(2))-\tau^2\log(4)\Big)\\
		& \quad\quad\quad\,\,\log\bigg(\dfrac{\tau+1}{\tau-1}\bigg).
	\end{aligned}
\end{equation*}

\medskip

For $a_{\scriptscriptstyle R}=c_{\scriptscriptstyle L}=0$, $b_{\scriptscriptstyle R}=c_{\scriptscriptstyle R}=a_{\scriptscriptstyle L}=1$, $\beta_{\scriptscriptstyle R}=-3$, $b_{\scriptscriptstyle L}\in\{2/3,1\}$ and $\beta_{\scriptscriptstyle L}\in\{2,3\}$, the right and left subsystems from $\eqref{eq:01}|_{\epsilon=0}$ have real saddles at the points $(3,0)$ and $(-3,3)$, respectively, with a homoclinic loop when $b_{\scriptscriptstyle L}=2/3$ and $\beta_{\scriptscriptstyle L}=3$. Now, for the same parameter values, except for $b_{\scriptscriptstyle L}=1$ and $\beta_{\scriptscriptstyle L}=2$, the right and left subsystems from $\eqref{eq:01}|_{\epsilon=0}$ have real saddles at the points $(3,0)$ and $(-3,2)$, respectively, with two heteroclinic orbits.
Moreover, in the homoclinic case, we have that $\alpha_1^2\approx 0.80487$, and in the  heteroclinic case, we have that $\alpha_1^2= 1$. This implies, by Proposition \ref{the:coef}, that $C_{1}^0$, $C_{1}^1$, $C_{1}^2$, $C_{2}^0$ and $C_{2}^1$ can be taken as free coefficients, such that $\max \{N_{M_0}+N_{M_1}+N_{M_2}\}$ is at least five in a neighborhood of $h=1$. More precisely, the configuration of zeros $(2,2,1)$ is achievable (obviously we can obtain the configuration $(2, 1, 2)$ through a reflection with respect to the $y$ axis). Therefore, the number of limit cycles from system \eqref{eq:01} that can bifurcate of the period annulus near $h=1$, when system $\eqref{eq:01}|_{\epsilon=0}$ is of the type SSS, is at least five.



\bigskip

\noindent{\it Case CSS.}  Consider the Melnikov functions given by the Theorem \ref{theo:css}. By Lemma \ref{lem:01} we can expand these functions at $h=1$ as
\begin{eqnarray*}
	&& M_0(h)=\sum_{j=0}^{3}C_{0}^j(h-1)^j+\sum_{j=1}^{2}D_{0}^j\log(h-1)(h-1)^j + \mathcal{O}((h-1)^4),\label{exp:cssm0}\\
	&& M_1(h)=\sum_{j=0}^{2}C_{1}^j(h-1)^j+\sum_{j=1}^{2}D_{1}^j\log(1-h)(h-1)^j + \mathcal{O}((h-1)^3),\label{exp:cssm1}\\
	&& M_2(h)=\sum_{j=0}^{2}C_{2}^j(h-1)^j+\sum_{j=1}^{2}D_{2}^j\log(1-h)(h-1)^j + \mathcal{O}((h-1)^3),\label{exp:cssm2}
\end{eqnarray*}
where $C_{i}^j$ and $D_{i}^j$, $i=0,1,2$ and $j=0,\dots,3$, depending on the parameters of system \eqref{eq:01}. As in the previous case, we will consider only the coefficients $C_{i}^j$, $i=0,1,2$ and $j=0,\dots,3$, whose expressions are in Appendix A.

\medskip

Further, similarly as in \eqref{eq:coefdifsss}, we have that if $b_{\scriptscriptstyle R}=1$ then $C_{0}^0=C_{1}^0+C_{2}^0$, $C_{0}^1=C_{1}^1+C_{2}^1$ and $C_{0}^2=C_{1}^2+C_{2}^2$. Now, when
\begin{equation*}
	\begin{aligned}
		& p_{00} = \frac{1}{4\omega_{\scriptscriptstyle RS}}\bigg(2 (u_{10} - v_{01} + 2( u_{00} - p_{10}) ) \omega_{\scriptscriptstyle RS} - 2 (p_{10} + q_{01}) \tau_{\scriptscriptstyle RS} + (p_{10} + q_{01}) (\tau_{\scriptscriptstyle RS}^2-1 )\\
		&\quad\quad\quad\log\bigg(\frac{\tau_{\scriptscriptstyle RS}+1}{\tau_{\scriptscriptstyle RS}-1}\bigg)\bigg),\\
		& u_{10} = -\frac{1}{\omega_{\scriptscriptstyle RS}\log(4)}\bigg(-2 (p_{10} + q_{01}) \tau_{\scriptscriptstyle RS} + v_{01} \omega_{\scriptscriptstyle RS} \log(4) + (p_{10} + q_{01}) (1 + \tau_{\scriptscriptstyle RS}^2)\\
		&\quad\quad\quad\log\bigg(\frac{\tau_{\scriptscriptstyle RS}+1}{\tau_{\scriptscriptstyle RS}-1}\bigg)\bigg),\\
		& p_{10} = - q_{01},\\
		& r_{00} = \frac{1}{4\omega_{\scriptscriptstyle LS}}\bigg(4 r_{10} \omega_{\scriptscriptstyle LC} + 4 b_{\scriptscriptstyle L} (u_{00} + v_{01}) \omega_{\scriptscriptstyle LC} + 2 b_{\scriptscriptstyle L} (r_{10} + s_{01}) (\pi \mu_2 (1 + \tau_{\scriptscriptstyle LC}^2)-\tau_{\scriptscriptstyle LC}) \\
		&\quad\quad\quad+ (-1)^{\mu_2} b_{\scriptscriptstyle L} (r_{10} + s_{01}) (1 + \tau_{\scriptscriptstyle LC}^2)\arccos\bigg(\frac{\tau_{\scriptscriptstyle LC}^2-1}{\tau_{\scriptscriptstyle LC}^2+1}\bigg)\bigg),\\
		& r_{10} = - s_{01},
	\end{aligned}
\end{equation*}
we have that 
$$C_{1}^0=C_{1}^1=C_{1}^2=C_{2}^0=C_{2}^1=C_{2}^2=0,\quad\text{with}\quad\text{rank}\frac{\partial(C_{1}^0,C_{1}^1,C_{1}^2,C_{2}^0,C_{2}^1)}{\partial(p_{00},u_{10},p_{10},r_{00},r_{10})}=5.$$
Moreover, $C_{2}^2=\alpha_1^2 C_{1}^2+\alpha_1^0 C_{1}^0+\alpha_1^1 C_{1}^1+\alpha_2^0 C_{2}^0+\alpha_2^1 C_{2}^1$. For this case, with $a_{\scriptscriptstyle R}=0$, $b_{\scriptscriptstyle R}=c_{\scriptscriptstyle R}=a_{\scriptscriptstyle L}=1$, $\beta_{\scriptscriptstyle R}=-3$, $b_{\scriptscriptstyle L}=2$, $c_{\scriptscriptstyle L}=-1$ and $\beta_{\scriptscriptstyle L}\in\{-4,-1/2\}$, the right subsystem from $\eqref{eq:01}|_{\epsilon=0}$ has a real saddle at the point $(3,0)$ and the left subsystem has a virtual (resp. real) center at the point $(0,-1/2)$ (resp. $(-7,3)$) when $\beta_{\scriptscriptstyle L}=-1/2$ (resp. $\beta_{\scriptscriptstyle L}=-4$). Moreover, we have that $\alpha_1^2\approx-0.2170$ (resp. $\alpha_1^2\approx-3.3874$). This implies, by Proposition \ref{the:coef}, that $C_{1}^0$, $C_{1}^1$, $C_{1}^2$, $C_{2}^0$ and $C_{2}^1$ can be taken as free coefficients, such that $\max \{N_{M_0}+N_{M_1}+N_{M_2}\}$ is at least six in a neighborhood of $h=1$. More precisely, the configuration of zeros $(2,2,2)$ is achievable. Therefore, the number of limit cycles from system \eqref{eq:01} that can bifurcate of the period annulus near $h=1$, when system $\eqref{eq:01}|_{\epsilon=0}$ is of the type CSS, is at least six.



\bigskip

\noindent{\it Case CSC.}  Consider the Melnikov functions given by the Theorem \ref{theo:csc}. By Lemma \ref{lem:01} we can expand these functions at $h=1$ as
\begin{eqnarray*}
	&& M_0(h)=\sum_{j=0}^{3}C_{0}^j(h-1)^j+\sum_{j=1}^{2}D_{0}^j\log(h-1)(h-1)^j + \mathcal{O}((h-1)^4),\label{exp:cscm0}\\
	&& M_1(h)=\sum_{j=0}^{2}C_{1}^j(h-1)^j+\sum_{j=1}^{2}D_{1}^j\log(1-h)(h-1)^j + \mathcal{O}((h-1)^3),\label{exp:cscm1}\\
	&& M_2(h)=\sum_{j=0}^{2}C_{2}^j(h-1)^j+\sum_{j=1}^{2}D_{2}^j\log(1-h)(h-1)^j + \mathcal{O}((h-1)^3),\label{exp:cscm2}
\end{eqnarray*}
where $C_{i}^j$ and $D_{i}^j$, $i=0,1,2$ and $j=0,\dots,3$, depending on the parameters of system \eqref{eq:01}. As in the previous cases, we will consider only the coefficients $C_{i}^j$, $i=0,1,2$ and $j=0,\dots,3$, whose expressions are in Appendix B.

\medskip

Further, similarly as in \eqref{eq:coefdifsss}, we have that if $b_{\scriptscriptstyle R}=1$ then $C_{0}^0=C_{1}^0+C_{2}^0$, $C_{0}^1=C_{1}^1+C_{2}^1$ and $C_{0}^2=C_{1}^2+C_{2}^2$. Now, when
\begin{equation*}
	\begin{aligned}
		& p_{00} = \frac{1}{4\omega_{\scriptscriptstyle RC}}\bigg(-2 \pi (p_{10} + q_{01}) \mu_1 + 2 \Big( 2 (u_{00} -p_{10}) + u_{10} - v_{01}\Big) \omega_{\scriptscriptstyle RC} + 2 (p_{10} + q_{01}) \tau_{\scriptscriptstyle RC} \\
		&\quad\quad\quad - 2 \pi (p_{10} + q_{01}) \mu_1 \tau_{\scriptscriptstyle RC}^2 + (-1)^{(1 + \mu_1)} (p_{10} + q_{01}) (1 + \tau_{\scriptscriptstyle RC}^2) \arccos\bigg(\frac{\tau_{\scriptscriptstyle RC}^2-1}{\tau_{\scriptscriptstyle RC}^2+1}\bigg)\bigg),\\
		& u_{10} = -\frac{1}{\omega_{\scriptscriptstyle RC}\log(4)}\bigg((p_{10} + q_{01}) \Big(-2 (-1)^{\mu_1} + (-1)^{\mu_1}  (1 + \tau_{\scriptscriptstyle RC}^2)\Big) \arccos\bigg(\frac{\tau_{\scriptscriptstyle RC}^2-1}{\tau_{\scriptscriptstyle RC}^2+1}\bigg)\bigg) \\
		&\quad\quad\quad + 2 \Big(( p_{10} + q_{01} ) \Big((-1)^{(1 + \mu_1)} \tau_{\scriptscriptstyle RC} + \pi \mu_1 (\tau_{\scriptscriptstyle RC}^2-1)\Big) - v_{01} \omega_{\scriptscriptstyle RC} \log(2)\Big)\bigg),\\
		& p_{10} = - q_{01},\\
		& r_{00} = \frac{1}{4\omega_{\scriptscriptstyle LS}}\bigg(4 r_{10} \omega_{\scriptscriptstyle LC} + 4 b_{\scriptscriptstyle L} (u_{00} + v_{01}) \omega_{\scriptscriptstyle LC} + 2 b_{\scriptscriptstyle L} (r_{10} + s_{01}) (\pi \mu_2 (1 + \tau_{\scriptscriptstyle LC}^2)-\tau_{\scriptscriptstyle LC}) \\
		&\quad\quad\quad + (-1)^{\mu_2} b_{\scriptscriptstyle L} (r_{10} + s_{01}) (1 + \tau_{\scriptscriptstyle LC}^2)\arccos\bigg(\frac{\tau_{\scriptscriptstyle LC}^2-1}{\tau_{\scriptscriptstyle LC}^2+1}\bigg)\bigg),\\
		& r_{10} = - s_{01},
	\end{aligned}
\end{equation*}
we have that 
$$C_{1}^0=C_{1}^1=C_{1}^2=C_{2}^0=C_{2}^1=C_{2}^2=0,\quad\text{with}\quad\text{rank}\frac{\partial(C_{1}^0,C_{1}^1,C_{1}^2,C_{2}^0,C_{2}^1)}{\partial(p_{00},u_{10},p_{10},r_{00},r_{10})}=5.$$
Moreover, $C_{2}^2=\alpha_1^2 C_{1}^2+\alpha_1^0 C_{1}^0+\alpha_1^1 C_{1}^1+\alpha_2^0 C_{2}^0+\alpha_2^1 C_{2}^1$. For this case, with $a_{\scriptscriptstyle R}=0$, $b_{\scriptscriptstyle R}=a_{\scriptscriptstyle L}=1$, $b_{\scriptscriptstyle L}=2$, $c_{\scriptscriptstyle R}=c_{\scriptscriptstyle L}=-1$, $\beta_{\scriptscriptstyle R}\in\{0,2\}$ and $\beta_{\scriptscriptstyle L}\in\{-3,-1/2\}$, the right subsystem from $\eqref{eq:01}|_{\epsilon=0}$ has a virtual (resp. real) center at the point $(0,0)$ (resp. $(2,0)$) when $\beta_{\scriptscriptstyle R}=0$ (resp. $\beta_{\scriptscriptstyle R}=2$) and the left subsystem has a virtual (resp. real) center at the point $(0,-1/2)$ (resp. $(-5,2)$) when  $\beta_{\scriptscriptstyle L}=-1/2$ (resp. $\beta_{\scriptscriptstyle L}=-3$). Moreover, we have that $\alpha_1^2\approx-1.3811$ when $\beta_{\scriptscriptstyle R}=0$ and $\beta_{\scriptscriptstyle L}=-1/2$, $\alpha_1^2\approx-27.0327$ when $\beta_{\scriptscriptstyle R}=0$ and $\beta_{\scriptscriptstyle L}=-3$,    $\alpha_1^2\approx-1.0591$ when $\beta_{\scriptscriptstyle R}=2$ and $\beta_{\scriptscriptstyle L}=-3$. This implies, by Proposition \ref{the:coef}, that $C_{1}^0$, $C_{1}^1$, $C_{1}^2$, $C_{2}^0$ and $C_{2}^1$ can be taken as free coefficients, such that $\max \{N_{M_0}+N_{M_1}+N_{M_2}\}$ is at least six in a neighborhood of $h=1$. More precisely, the configuration of zeros $(2,2,2)$ is achievable. Therefore, the number of limit cycles from system \eqref{eq:01} that can bifurcate of the period annulus near $h=1$, when system $\eqref{eq:01}|_{\epsilon=0}$ is of the type CSC, is at least six.
\end{proof}

\begin{remark}
Through numerical simulations, apparently for the case SSS, we always have $\alpha_1^2>0$. Therefore, we cannot guarantee more than 5 limit cycles using our method proposed in Proposition \ref{the:coef}.
\end{remark}

\section{Appendix A.}
The coefficients list of the Melnikov functions for the case CSS is the following:
\begin{equation*}
	\begin{aligned}
		& C_{0}^0 = 2 (p_{00} + p_{10}) + \frac{2b_{\scriptscriptstyle R}}{b_{\scriptscriptstyle L}}(r_{10}-r_{00})+b_{\scriptscriptstyle R}\bigg(2 v_{01}-2 u_{10}+\frac{1}{\omega_{\scriptscriptstyle LC}}(r_{10} + s_{01}) ( \pi \mu_2 (1 + \tau_{\scriptscriptstyle LC}^2)\\
		&\quad\quad\,\, -\tau_{\scriptscriptstyle LC})+\frac{\tau_{\scriptscriptstyle RS}}{\omega_{\scriptscriptstyle RS}}(p_{10} + q_{01})\bigg)+\frac{(-1)^{\mu_2}b_{\scriptscriptstyle R}}{2\omega_{\scriptscriptstyle LC}}(r_{10} + s_{01})(\tau_{\scriptscriptstyle LC}^2+1)\arccos\bigg(\frac{\tau_{\scriptscriptstyle LC}^2-1}{\tau_{\scriptscriptstyle LC}^2+1}\bigg)\\
		&\quad\quad\,\, -\frac{b_{\scriptscriptstyle R}}{2\omega_{\scriptscriptstyle RS}}(p_{10} + q_{01}) (\tau_{\scriptscriptstyle RS}^2-1)\log\bigg(\frac{\tau_{\scriptscriptstyle RS}+1}{\tau_{\scriptscriptstyle RS}-1}\bigg),
	\end{aligned}
\end{equation*}
\begin{equation*}
	\begin{aligned}
		& C_{0}^1 = 2 (p_{00} + p_{10}) + \frac{(-1)^{\mu_2}b_{\scriptscriptstyle R}}{\omega_{\scriptscriptstyle LC}}(r_{10} + s_{01})\arccos\bigg(\frac{\tau_{\scriptscriptstyle LC}^2-1}{\tau_{\scriptscriptstyle LC}^2+1}\bigg) + b_{\scriptscriptstyle R}\bigg(\frac{2}{b_{\scriptscriptstyle L}}( r_{10}-r_{00})+\frac{1}{\omega_{\scriptscriptstyle LC}}\\
		&\quad\quad\quad (r_{10} + s_{01}) (2 \pi \mu_2 + ( (-1)^{\mu_2}-1 ) \tau_{\scriptscriptstyle LC})+u_{10} (\log(4)-2 ) + v_{01} (2 + \log(4))\bigg)\\
		&\quad\quad\quad+\frac{b_{\scriptscriptstyle R}}{\omega_{\scriptscriptstyle RS}}(p_{10} + q_{01})\log\bigg(\frac{\tau_{\scriptscriptstyle RS}+1}{\tau_{\scriptscriptstyle RS}-1}\bigg),
	\end{aligned}
\end{equation*}
\begin{equation*}
	\begin{aligned}
		& C_{0}^2 = \frac{b_{\scriptscriptstyle R}}{2\omega_{\scriptscriptstyle LC}\omega_{\scriptscriptstyle RS}}\bigg(2 \bigg(\pi (r_{10} + s_{01}) \mu_2 \omega_{\scriptscriptstyle RS} + (u_{10} + v_{01}) \omega_{\scriptscriptstyle LC} \omega_{\scriptscriptstyle RS}+\frac{(-1)^{\mu_2}\tau_{\scriptscriptstyle LC}\omega_{\scriptscriptstyle RS}}{\tau_{\scriptscriptstyle LC}^2+1}(r_{10} + s_{01})\\
		&\quad\quad\quad \frac{\tau_{\scriptscriptstyle RS}\omega_{\scriptscriptstyle LC}}{\tau_{\scriptscriptstyle RS}^2-1}(p_{10} + q_{01})\bigg)+(-1)^{\mu_2} (r_{10} + s_{01}) \omega_{\scriptscriptstyle RS}\arccos\bigg(\frac{\tau_{\scriptscriptstyle LC}^2-1}{\tau_{\scriptscriptstyle LC}^2+1}\bigg)+(u_{10} + v_{01}) \omega_{\scriptscriptstyle LC} \\
		&\quad\quad\quad\omega_{\scriptscriptstyle RS} \log(4) + (p_{10} + q_{01}) \omega_{\scriptscriptstyle LC}\log\bigg(\frac{\tau_{\scriptscriptstyle RS}+1}{\tau_{\scriptscriptstyle RS}-1}\bigg)\bigg),
	\end{aligned}
\end{equation*}
\begin{equation*}
	\begin{aligned}
		& C_{0}^3 = \frac{b_{\scriptscriptstyle R}}{12}\bigg(3 (u_{10} +  v_{01})+\frac{8(-1)^{\mu_2}\tau_{\scriptscriptstyle LC}^3}{\omega_{\scriptscriptstyle LC}(\tau_{\scriptscriptstyle LC}^2+1)^2}(r_{10} + s_{01})+\frac{8\tau_{\scriptscriptstyle RS}^3}{\omega_{\scriptscriptstyle RS}(\tau_{\scriptscriptstyle RS}^2-1)^2}(p_{10} + q_{01}),\bigg)\quad\quad
	\end{aligned}
\end{equation*}
\begin{equation*}
	\begin{aligned}
		& C_{1}^0 = 2( p_{00} + p_{10}) + b_{\scriptscriptstyle R} ( v_{01} - 2 u_{00} - u_{10}) + \frac{b_{\scriptscriptstyle R}\tau_{\scriptscriptstyle RS}}{\omega_{\scriptscriptstyle LC}}(p_{10} + q_{01})-\frac{b_{\scriptscriptstyle R}}{2\omega_{\scriptscriptstyle RS}}(p_{10} + q_{01})\quad\quad\quad\\
		&\quad\quad\quad(\tau_{\scriptscriptstyle RS}^2-1)\log\bigg(\frac{\tau_{\scriptscriptstyle RS}+1}{\tau_{\scriptscriptstyle RS}-1}\bigg),\\
		& C_{1}^1 = 2( p_{00} + p_{10}) + b_{\scriptscriptstyle R} \Big( v_{01} - 2 u_{00} - u_{10}+(u_{10} + v_{01}) \log(2)\Big)+\frac{b_{\scriptscriptstyle R}}{\omega_{\scriptscriptstyle RS}}(p_{10} + q_{01})\\
		&\quad\quad\quad\log\bigg(\frac{\tau_{\scriptscriptstyle RS}+1}{\tau_{\scriptscriptstyle RS}-1}\bigg),
	\end{aligned}
\end{equation*}
\begin{equation*}
	\begin{aligned}
		& C_{1}^2 = \frac{b_{\scriptscriptstyle R}}{2}\bigg(\frac{2\tau_{\scriptscriptstyle RS}}{\omega_{\scriptscriptstyle RS}(\tau_{\scriptscriptstyle RS}^2-1)}(p_{10} + q_{01})+(u_{10} + v_{01}) (1 + \log(2))+\frac{1}{\omega_{\scriptscriptstyle RS}}(p_{10} + q_{01})\quad\quad\quad\quad\\
		&\quad\quad\quad\log\bigg(\frac{\tau_{\scriptscriptstyle RS}+1}{\tau_{\scriptscriptstyle RS}-1}\bigg)\bigg),\\
		& C_{2}^0 = \frac{2}{b_{\scriptscriptstyle L}}( r_{10}-r_{00})+2 u_{00} - u_{10} + v_{01} + \frac{1}{\omega_{\scriptscriptstyle LC}}(r_{10} + s_{01}) ( \pi \mu_2 (1 + \tau_{\scriptscriptstyle LC}^2)-\tau_{\scriptscriptstyle LC} )\\
		&\quad\quad\quad\frac{(-1)^{\mu_2}(\tau_{\scriptscriptstyle LC}^2+1)}{2\omega_{\scriptscriptstyle LC}}(r_{10} + s_{01})\arccos\bigg(\frac{\tau_{\scriptscriptstyle LC}^2-1}{\tau_{\scriptscriptstyle LC}^2+1}\bigg),
	\end{aligned}
\end{equation*}
\begin{equation*}
	\begin{aligned}
		& C_{2}^1 = \frac{2}{b_{\scriptscriptstyle L}}( r_{10}-r_{00})+2 u_{00} - u_{10} + v_{01} + \frac{1}{\omega_{\scriptscriptstyle LC}}(r_{10} + s_{01})(2 \pi \mu_2 + ( (-1)^{\mu_2}-1)\quad\quad\quad\quad\\
		&\quad\quad\quad \tau_{\scriptscriptstyle LC})+(u_{10} + v_{01}) \log(2) + \frac{(-1)^{\mu_2}}{\omega_{\scriptscriptstyle LC}}(r_{10} + s_{01})\arccos\bigg(\frac{\tau_{\scriptscriptstyle LC}^2-1}{\tau_{\scriptscriptstyle LC}^2+1}\bigg),\\
		& C_{2}^2 = \frac{1}{2}\bigg(\frac{2}{\omega_{\scriptscriptstyle LC}}\bigg(\pi \mu_2+\frac{(-1)^{\mu_2}\tau_{\scriptscriptstyle LC}}{\tau_{\scriptscriptstyle LC}^2+1}\bigg)(r_{10} + s_{01})+\frac{(-1)^{\mu_2}}{\omega_{\scriptscriptstyle LC}}(r_{10} + s_{01})\\
		&\quad\quad\quad \arccos\bigg(\frac{\tau_{\scriptscriptstyle LC}^2-1}{\tau_{\scriptscriptstyle LC}^2+1}\bigg)+(u_{10} + v_{01}) (1 + \log(2))\bigg).
	\end{aligned}
\end{equation*}


\section{Appendix B.}
The coefficients list of the Melnikov functions for the case CSC is the following:
\begin{equation*}
	\begin{aligned}
		& C_{0}^0 = \frac{1}{2}\bigg(4 (p_{00} + p_{10}) + 2 b_{\scriptscriptstyle R}\bigg(\frac{2}{b_{\scriptscriptstyle L}}( r_{10}-r_{00})-2 (u_{10} +v_{01})-\frac{1}{\omega_{\scriptscriptstyle LC}}(r_{10} + s_{01}) ( \pi \mu_2 (1 + \tau_{\scriptscriptstyle LC}^2)\\
		&\quad\quad\quad-\tau_{\scriptscriptstyle LC})\bigg)+\frac{2 b_{\scriptscriptstyle R}}{\omega_{\scriptscriptstyle RC}} (p_{10} + 
		q_{01}) ( \pi \mu_1 (1 + \tau_{\scriptscriptstyle RC}^2)-\tau_{\scriptscriptstyle RC} )+\frac{(-1)^{\mu_2}b_{\scriptscriptstyle R}}{\omega_{\scriptscriptstyle LC}}(r_{10} + s_{01})(\tau_{\scriptscriptstyle LC}^2+1)\\
		&\quad\quad\quad\arccos\bigg(\frac{\tau_{\scriptscriptstyle LC}^2-1}{\tau_{\scriptscriptstyle LC}^2+1}\bigg)+\frac{(-1)^{\mu_1}b_{\scriptscriptstyle R}}{\omega_{\scriptscriptstyle RC}}(p_{10} + q_{01})(\tau_{\scriptscriptstyle RC}^2+1)\arccos\bigg(\frac{\tau_{\scriptscriptstyle RC}^2-1}{\tau_{\scriptscriptstyle RC}^2+1}\bigg)\bigg),	
	\end{aligned}
	\end{equation*}
\begin{equation*}
	\begin{aligned}
		& C_{0}^1 = \frac{1}{b_{\scriptscriptstyle L}\omega_{\scriptscriptstyle LC}\omega_{\scriptscriptstyle RC}}\bigg(\omega_{\scriptscriptstyle LC}\omega_{\scriptscriptstyle RC}(2 b_{\scriptscriptstyle L} (p_{00} + p_{10}) + 2 b_{\scriptscriptstyle R} (r_{10}-r_{00}) + 2b_{\scriptscriptstyle L}b_{\scriptscriptstyle R} (v_{01}-u_{10}))+b_{\scriptscriptstyle L} b_{\scriptscriptstyle R} \\
		&\quad\quad\quad \bigg(2 \pi( r_{10} + s_{01}) \mu_2 \omega_{\scriptscriptstyle RC} + (-r_{10} - s_{01} + (-1)^{\mu_2} (r_{10} + s_{01})) \tau_{\scriptscriptstyle LC} \omega_{\scriptscriptstyle RC} + p_{10} \omega_{\scriptscriptstyle LC} (2 \pi \mu_1  \\
		&\quad\quad\quad+(-1 + (-1)^{\mu_1}) \tau_{\scriptscriptstyle LC}) + q_{01} (2 \pi \mu_1 \omega_{\scriptscriptstyle LC} - \omega_{\scriptscriptstyle LC} \tau_{\scriptscriptstyle RC} + (-1)^{\mu_1} \omega_{\scriptscriptstyle LC} \tau_{\scriptscriptstyle RC})\bigg)+b_{\scriptscriptstyle L} b_{\scriptscriptstyle R} \bigg((-1)^{\mu_2} \\
		&\quad\quad\quad(r_{10} + s_{01})\omega_{\scriptscriptstyle RC}\arccos\bigg(\frac{\tau_{\scriptscriptstyle LC}^2-1}{\tau_{\scriptscriptstyle LC}^2+1}\bigg)+(-1)^{\mu_1} (p_{10} + q_{01}) \omega_{\scriptscriptstyle LC}\arccos\bigg(\frac{\tau_{\scriptscriptstyle RC}^2-1}{\tau_{\scriptscriptstyle RC}^2+1}\bigg)\\
		&\quad\quad\quad +(u_{10} + v_{01}) \omega_{\scriptscriptstyle LC} \omega_{\scriptscriptstyle RC} \log(4)\bigg)\bigg),
	\end{aligned}
\end{equation*}
\begin{equation*}
\begin{aligned}
		& C_{0}^2 = \frac{b_{\scriptscriptstyle L}}{2}\bigg(2 u_{10} + 2\bigg(v_{01}+\frac{1}{\omega_{\scriptscriptstyle RC}}\bigg(\frac{1}{\omega_{\scriptscriptstyle LC}}\bigg(\pi (p_{10} + q_{01}) \mu_1 \omega_{\scriptscriptstyle LC} + \pi (r_{10} + s_{01}) \mu_2 \omega_{\scriptscriptstyle RC} +\frac{(-1)^{\mu_2}}{\tau_{\scriptscriptstyle LC}^2+1}\\
		&\quad\quad\quad \tau_{\scriptscriptstyle LC} \omega_{\scriptscriptstyle LC}(r_{10} + s_{01}) \bigg)+\frac{(-1)^{\mu_1}\tau_{\scriptscriptstyle RC}}{\tau_{\scriptscriptstyle RC}^2+1}(p_{10} + q_{01})\bigg)\bigg)+\frac{(-1)^{\mu_2}}{\omega_{\scriptscriptstyle LC}}(r_{10} + s_{01})\\
		&\quad\quad\quad\arccos\bigg(\frac{\tau_{\scriptscriptstyle LC}^2-1}{\tau_{\scriptscriptstyle LC}^2+1}\bigg)+\frac{(-1)^{\mu_1}}{\omega_{\scriptscriptstyle RC}}(p_{10} + q_{01})\arccos\bigg(\frac{\tau_{\scriptscriptstyle RC}^2-1}{\tau_{\scriptscriptstyle RC}^2+1}\bigg)+(u_{10} + v_{01}) \log(4)\bigg),
	\end{aligned}
\end{equation*}
\begin{equation*}
\begin{aligned}
	& C_{0}^3 = \frac{b_{\scriptscriptstyle R}}{12}\bigg(3 (u_{10} +  v_{01})+\frac{8(-1)^{\mu_2}\tau_{\scriptscriptstyle LC}^3}{\omega_{\scriptscriptstyle LC}(\tau_{\scriptscriptstyle LC}^2+1)^2}(r_{10} + s_{01})+\frac{8(-1)^{\mu_1}\tau_{\scriptscriptstyle RC}^3}{\omega_{\scriptscriptstyle RC}(\tau_{\scriptscriptstyle RC}^2-1)^2}(p_{10} + q_{01})\bigg),\quad\quad\quad\\
	& C_{1}^0 = \frac{1}{2\omega_{\scriptscriptstyle RC}}\bigg(4 (p_{00} + p_{10}) \omega_{\scriptscriptstyle RC} - 2 b_{\scriptscriptstyle R} (2 u_{00} + u_{10} - v_{01}) \omega_{\scriptscriptstyle RC} + 2 b_{\scriptscriptstyle R} (p_{10} + q_{01}) ( \pi \mu_1 (1 + \tau_{\scriptscriptstyle RC}^2) \\
	&\quad\quad\quad -\tau_{\scriptscriptstyle RC})+ (-1)^{\mu_1} b_{\scriptscriptstyle R} (p_{10} + q_{01}) (1 + \tau_{\scriptscriptstyle RC}^2)\arccos\bigg(\frac{\tau_{\scriptscriptstyle RC}^2-1}{\tau_{\scriptscriptstyle RC}^2+1}\bigg)\bigg),
\end{aligned}
\end{equation*}
\begin{equation*}
\begin{aligned}
	& C_{1}^1 = \frac{1}{\omega_{\scriptscriptstyle RC}}\bigg(2 (p_{00} + p_{10}) \omega_{\scriptscriptstyle RC} + b_{\scriptscriptstyle R} (p_{10} + 
	q_{01}) (2 \pi \mu_1 + ((-1)^{\mu_1}-1) \tau_{\scriptscriptstyle RC}) + (-1)^{\mu_1} b_{\scriptscriptstyle R}\quad\quad\quad \\
	&\quad\quad\quad (p_{10} + q_{01})\arccos\bigg(\frac{\tau_{\scriptscriptstyle RC}^2-1}{\tau_{\scriptscriptstyle RC}^2+1}\bigg)+b_{\scriptscriptstyle R} \omega_{\scriptscriptstyle RC} ( v_{01}-2 u_{00} - u_{10}  + (u_{10} + v_{01}) \log(2))\bigg),\\
	& C_{1}^2 = \frac{b_{\scriptscriptstyle R}}{2\omega_{\scriptscriptstyle RC}(\tau_{\scriptscriptstyle RC}^2+1)}\bigg((-1)^{\mu_1}  (p_{10} + q_{01})\bigg(2 \tau_{\scriptscriptstyle RC} + (1 + \tau_{\scriptscriptstyle RC}^2)\arccos\bigg(\frac{\tau_{\scriptscriptstyle RC}^2-1}{\tau_{\scriptscriptstyle RC}^2+1}\bigg)\bigg)\\
	&\quad\quad\quad + (1 + \tau_{\scriptscriptstyle RC}^2) \bigg(2 \pi (p_{10} + q_{01}) \mu_1 + (u_{10} + v_{01}) \omega_{\scriptscriptstyle RC} (1 + \log(2))\bigg)\bigg),
\end{aligned}
\end{equation*}
\begin{equation*}
\begin{aligned}
	& C_{2}^0 = \frac{2}{b_{\scriptscriptstyle L}}(r_{10}-r_{00})+2 u_{00} - u_{10} + v_{01} +\frac{1}{\omega_{\scriptscriptstyle LC}}(r_{10} + s_{01}) ( \pi \mu_2 (1 + \tau_{\scriptscriptstyle LC}^2)-\tau_{\scriptscriptstyle LC})+\frac{(-1)^{\mu_2}}{2\omega_{\scriptscriptstyle LC}}\\
	&\quad\quad\quad (r_{10} + s_{01}) (\tau_{\scriptscriptstyle LC}^2+1)\arccos\bigg(\frac{\tau_{\scriptscriptstyle LC}^2-1}{\tau_{\scriptscriptstyle LC}^2+1}\bigg),\\
	&C_{2}^1 = \frac{2}{b_{\scriptscriptstyle L}}(r_{10}-r_{00})+2 u_{00} - u_{10} + v_{01} +\frac{1}{\omega_{\scriptscriptstyle LC}}(r_{10} + s_{01}) (2 \pi \mu_2 + ((-1)^{\mu_2}-1) \tau_{\scriptscriptstyle LC})\\
	&\quad\quad\quad +\frac{(-1)^{\mu_2}}{\omega_{\scriptscriptstyle LC}}(r_{10} + s_{01}) \arccos\bigg(\frac{\tau_{\scriptscriptstyle LC}^2-1}{\tau_{\scriptscriptstyle LC}^2+1}\bigg)+(u_{10} + v_{01}) \log(2),
\end{aligned}
\end{equation*}
\begin{equation*}
\begin{aligned}
	&C_{2}^2 = \frac{1}{2}\bigg(\frac{2}{\omega_{\scriptscriptstyle LC}}(r_{10} + s_{01})\bigg(\pi \mu_2+\frac{(-1)^{\mu_2}\tau_{\scriptscriptstyle LC}}{\tau_{\scriptscriptstyle LC}^2+1}\bigg)+\frac{(-1)^{\mu_2}}{\omega_{\scriptscriptstyle LC}}(r_{10} + s_{01})\arccos\bigg(\frac{\tau_{\scriptscriptstyle LC}^2-1}{\tau_{\scriptscriptstyle LC}^2+1}\bigg)\\
	&\quad\quad\quad+(u_{10} + v_{01}) (1 + \log(2))\bigg).
	\end{aligned}
\end{equation*}


\section{Acknowledgments}
The first author is partially supported by Pronex/FAPEG/CNPq grants 2012 10 26 7000 803 and  2017 10 26 7000 508, CAPES grant 88881.068462/2014-01 and Universal/CNPq grant 420858/2016-4.
The second, third and fourth authors are partially supported by S\~ao Paulo Research Foundation (FAPESP) grants 22/04040-6, 22/09633-5 and 18/19726-5, respectively.
The third and fourth authors are also supported by S\~ao Paulo Research Foundation (FAPESP) grant  19/10269-3.
The fifth author is supported by CAPES grant 88882.434343/2019-01. Moreover, this article was possible thanks to the scholarship granted from the Brazilian Federal Agency for Support and Evaluation of Graduate Education (CAPES), in the scope of the Program CAPES-Print, process number 88887.310463/2018-00, International Cooperation Project number 88881.310741/2018-01.

\addcontentsline{toc}{chapter}{Bibliografia}

\end{document}